\DeclareCiteCommand{\cite}[\color{ForestGreen}\mkbibbrackets] 
  {\usebibmacro{prenote}}
  {\usebibmacro{cite}}
  {\multicitedelim}
  {\usebibmacro{postnote}}
\theoremstyle{break}
\DeclareMathOperator{\Pb}{\mathbb{P}}
\newcommand{\Prob}[1]{\Pb{\hspace{-0.22em}\left(#1\right)}}
\renewcommand{\indic}{\mathds{1}}
\newcommand\Indic[1]{\indic{_{\{#1\}}}}
\renewcommand{\Z}{\mathbb{Z}}
\newcommandx{\rajouter}[2][1=]{\todo[inline,linecolor=red,backgroundcolor=red!25,bordercolor=red,#1]{#2}}
\newcommandx{\thiswillnotshow}[2][1=]{\todo[disable,#1]{#2}} 
\tikzset{notestyle/.append style={align=center}}
\title{Convergence of Eulerian triangulations}
\shorttitle{Convergence of Eulerian triangulations}
\author{A.~Carrance}
\begin{document}
\maketitle

\begin{fabsmfabstract}
  We prove that properly rescaled large planar Eulerian triangulations converge to the Brownian map. This result requires more than a standard application of the methods that have been used to obtain the convergence of other families of planar maps to the Brownian map, as the natural distance for Eulerian triangulations is a canonical oriented pseudo-distance. To circumvent this difficulty, we adapt the layer decomposition method, as formalized by Curien and Le Gall in \cite{curien-legall}, which yields asymptotic proportionality between three natural distances on planar Eulerian triangulations: the usual graph distance, the canonical oriented pseudo-distance, and the Riemannian metric. This notably gives the first mathematical proof of a convergence to the Brownian map for maps endowed with their Riemannian metric. Along the way, we also construct new models of infinite random maps, as local limits of large planar Eulerian triangulations.
\end{fabsmfabstract}

\vfil
\tableofcontents


\section{Introduction}

\subsection{Context}

Eulerian triangulations are face-bicolored triangulations. They can be encountered in several contexts. As their definition is quite straightforward, they are already an object of interest in themselves in enumerative combinatorics (see {\cite{tutte-slicings,bdg, bm-s, albenque-bouttier}}). Moreover, they are in bijection with combinatorial objects such as \textbf{constellations} and \textbf{bipartite maps}, and geometrical objects such as \textbf{Belyi surfaces} (see {\cite{lando-zvonkine}}). They also correspond to the two-dimensional case of \textbf{colored tensor models}, an approach to quantum gravity that generalizes matrix models to any dimension (see Part I of \cite{carrance-th} for an introduction to this topic).

The main aim of this paper is to show that large planar rooted Eulerian triangulations converge to the Brownian map (see \Cref{thm-cv-to-b-map} for a more precise statement). Along the way, we explore uncharted properties of planar Eulerian triangulations. This allows us to construct, in the case of Eulerian triangulations, many random objects and structures whose equivalents already exist for other families of planar maps.

Let us now briefly sketch how this exploration ties in together with proving \Cref{thm-cv-to-b-map}.\\

If one wants to prove that a family of planar maps converges to the Brownian map, the classical method is to use a \textbf{bijection} between this family, and a family of labeled trees, whose labels keep track of the distances in the map.  Obtaining a joint scaling limit for the trees and their label functions is a classical procedure, however, it then remains to deduce from this limit, a scaling limit for the metric space induced by the maps. This was first done independently by Le Gall for triangulations and $2p$-angulations {\cite{legall}}, and Miermont for quadrangulations {\cite{miermont}}, using different technical tools. The list of families amenable to this method has been expanded since then to general maps, general bipartite maps, simple triangulations and odd $p$-angulations \cite{bjm,abraham,addario-berry-albenque, addario-berry-albenque1}\footnote{We stay purposefully vague here, as some of these results rely on bijections with other types of decorated trees.}.

A more recent method applies to local modifications of distances, in families that are already known to converge to the Brownian map. This method, established by Curien and Le Gall in {\cite{curien-legall}} for usual triangulations, uses a \textbf{layer decomposition} of the maps, rather than a bijection with trees. This makes it possible to use an ergodic subbaditivity argument, to obtain that the modified and original distances are asymptotically proportional. This method has recently been extended by Lehéricy to planar quadrangulations (and general maps, \emph{via} Tutte's bijection) in {\cite{lehericy}}, using the layer decomposition of quadrangulations established by Le Gall and Lehéricy in {\cite{legall-lehericy}}. Note that a first notion of layer decomposition was already introduced by Krikun for usual triangulations (without self-loops) {\cite{Krikun2005}} and for quadrangulations {\cite{uipq}}.

In the case of Eulerian triangulations, there exists a bijection with a family of labeled trees, but, as we will explain in the sequel, these labels do not correspond to the usual graph distance from the root, but to an oriented pseudo-distance. This implies that we cannot a priori recover the distances from the labels, so that, while it is still easy to get a scaling limit at the level of labeled trees, we are stuck there without any additional ingredient. This ingredient turns out to be the layer decomposition. Indeed, the usual graph distance can be seen as a local modification of the oriented pseudo-distance, so that the layer decomposition method applies to Eulerian triangulations equipped with these two distances. This method then yields that the oriented pseudo-distance is asymptotically proportional to the usual graph distance, so that the labels do keep track of it up to a small error. This proves to be enough to obtain convergence to the Brownian map.

This is the first time that a combination of these two methods is needed to show such a convergence. It would be interesting to apply this to other families of maps, such as Eulerian quadrangulations.\\

Our layer decomposition of Eulerian triangulations also allows us to prove their convergence to the Brownian map when endowed with the \textbf{Riemannian metric}, which is inherited from the Euclidean geometric realization obtained by gluing equilateral triangles according to the combinatorics of the map. This result is the first of its kind to be proven mathematically, and as such it reinforces the link between random maps and models of $2D$ quantum gravity in theoretical physics, such as Causal Dynamical Triangulations (see for instance {\cite{cdt}}), in which it is the geometric realization itself that is studied.\\

Note that one could want to prove the convergence of planar Eulerian triangulations to the Brownian map using their bijection with bipartite maps, as the convergence for these has already been proven in {\cite{abraham}}. However, this would necessitate to treat the distances on an Eulerian triangulation as a local modification of the distances on the corresponding bipartite map, and thus use a layer decomposition of bipartite maps. As this has not been achieved yet, this route is a priori not easier than the one undertaken here, which has the advantage of uncovering a lot of properties of Eulerian triangulations. However, achieving a layer decomposition of bipartite maps would be interesting in itself.

\subsection{Outline}

In the whole paper, $\mathbf{c}_0$ refers to the constant $\mathbf{c}_0 \in [2/3,1]$ appearing in \Cref{prop-subadd-l} below. The main result of this paper is \Cref{thm-cv-to-b-map}. As the full statement of this theorem necessitates a bit of notation, we postpone it to \Cref{sec cv to b map}. We can however already give a much weaker version of it:
\begin{thm}
\label{thm-weak-cv-to-b-map}
Let $\mathcal{T}_n$ be a uniform random rooted Eulerian planar triangulation with $n$ black faces, equipped with its usual graph distance $d_n$, and let $V(\mathcal{T}_n)$ denote its vertex set. Let $(\mathbf{m}_{\infty},D^*)$ be the Brownian map. The following convergence holds
\[
n^{-1/4}\cdot(V(\mathcal{T}_n),d_n) \xrightarrow[n \to \infty]{(d)} \mathbf{c}_0 \cdot (\mathbf{m}_{\infty},D^*),
\]
for the Gromov-Hausdorff distance on the space of isometry classes of compact metric spaces.
\end{thm}
We give a detailed definition of the Brownian map in \Cref{sec cv to b map}, and we refer to \cite{burago} for a precise definition of the Gromov-Hausdorff distance.

We will see how \Cref{thm-weak-cv-to-b-map} can be obtained from the following result:

\begin{thm}
\label{thm-total-asympt-prop-in-finite-trig}
Let $\mathcal{T}_n$ be a uniform random rooted Eulerian planar triangulation with $n$ black faces, and let $V(\mathcal{T}_n)$ be its vertex set. We denote by $d_n$ its usual graph distance, and by $\vec{d}_n$ its canonical oriented pseudo-distance. For every $\varepsilon > 0$, we have
\[
\Prob{\sup_{x,y \, \in V(\mathcal{T}_n)} \lvert d_n(x,y) - \mathbf{c}_0\vec{d}_n(x,y) \rvert > \varepsilon n^{1/4}} \xrightarrow[n \to \infty]{} 0.
\]
\end{thm}

After giving a precise description of the structure of Eulerian triangulations endowed with their oriented pseudo-distance in \Cref{sec struct}, in \Cref{sec cv to b map} we will give the complete statement of \Cref{thm-cv-to-b-map}, and explain how to prove it using \Cref{thm-total-asympt-prop-in-finite-trig}. \Cref{sec tech prelim,sect-skeleton,sec lhpet,sec bounds,sec subadd} are then devoted to proving \Cref{thm-total-asympt-prop-in-finite-trig}.

Let us sketch the different steps of this proof. After some technical statements in \Cref{sec tech prelim}, pertaining either to asymptotic estimates of $\vec{d}$, or to asymptotics of the enumeration of Eulerian triangulations with a boundary, we detail in \Cref{sect-skeleton} the decomposition of finite rooted planar Eulerian triangulations (possibly with a boundary) into \emph{layers}, determined by the oriented distance from the root. This decomposition makes it possible to describe the random triangulation $\mathcal{T}_n$, defined like in \Cref{thm-weak-cv-to-b-map}, in terms of a branching process whose generations are associated to the layers of $\mathcal{T}_n$. This nice description of $\mathcal{T}_n$ allows us to take the limit $n \to \infty$, to define the Uniform Infinite Planar Eulerian triangulation, $\mathcal{T}_{\infty}$, that is naturally endowed with a decomposition into an infinite number of layers. Now, in \Cref{sec lhpet}, we take a local limit of $\mathcal{T}_{\infty}$ where we view these layers ``from infinity'', which yields the Lower Half-Planar Eulerian Triangulation $\mathcal{L}$. In \Cref{sec subadd}, we explain how the construction of this half-plane model makes it possible to obtain \Cref{thm-total-asympt-prop-in-finite-trig}. First, the layers of $\mathcal{L}$ are i.i.d., which makes it straightforward to apply an ergodic subbadditivity argument to the graph distance $d$ between the root of $\mathcal{L}$ and the $n$-th layer of $\mathcal{L}$. Then, we detail how this result can carry over to finite Eulerian triangulations, first for the graph distance between the root and a random uniform vertex, then between any two vertices, as stated in \Cref{thm-total-asympt-prop-in-finite-trig}. The transfer of the results from $\mathcal{L}$ to finite triangulations necessitates estimates on the distances in $\mathcal{L}$ that are derived in \Cref{sec bounds}.

Finally, \Cref{sec riem} tackles the case of the Riemannian metric, using the same arguments as for the usual graph distance, to show that it is asymptotically proportional to the oriented pseudo-distance, and that, endowed with it, planar Eulerian triangulations still converge to the Brownian map.\\

As our use of the layer decomposition to get the asymptotic proportionality of the oriented and usual distances follows closely the chain of arguments of {\cite{curien-legall}} (albeit with additional difficulties), we purposefully use similar notation, and will omit some details of proofs when they are very similar and do not present any additional subtleties in our case. This is especially the case in \Cref{sect-skeleton}, \Cref{subsec upper bounds} and \Cref{sec subadd}.

\section{Structure of Eulerian triangulations and bijection with trees}
\label{sec struct}

\subsection{Basic definitions}

We start by giving basic definitions related to graphs and maps, that will be needed in the sequel.

\begin{defnt}
Let $G$ be a finite connected graph. A \textbf{map} with underlying graph $G$ is an embedding $f$ of $G$ into an (orientable) surface $S$ such that
\begin{itemize}
\item the images of the (open) edges of $G$ are homeomorphic to (open) segments
\item the images of different edges do not intersect, except at their extremities if they correspond to the same vertex
\item the connected components of $S \setminus f(G)$ are homeomorphic to the open disk; these components are called the \textbf{faces} of the map.
\end{itemize} 
A \textbf{planar} map is a map embedded into the sphere. 

A \textbf{rooted} map is a map equipped with a distinguished oriented edge, called its \textbf{root edge}. The starting vertex of the root edge is called the \textbf{root vertex}.

A \textbf{pointed} map is a map equipped with a distinguished vertex.
\end{defnt}

Maps are usually considered up to orientation-preserving homeomorphisms of the surface $S$. The only automorphism of a map that fixes an oriented edge is the trivial one, so that rooted maps do not have any non-trivial automorphisms. In the sequel, we will consider maps up to isomorphism, unless specified.

Another way to define a map up to isomorphism is to equip its underlying graph with a cyclic ordering of the edges around each vertex.

\begin{defnt}
A \textbf{corner} in a map is an angular sector between two consecutive edges in the cyclic order around a vertex.
\end{defnt}

Two notions that will be useful in the sequel are those of maps with boundaries, and submaps:

\begin{defnt}
A \textbf{map with boundaries} is a map $m$ with a certain number of distinguished faces, that are called its \textbf{external faces}. The other faces of $m$ are naturally called its \textbf{internal faces}. Likewise, the vertices of $m$ that are not incident to any external face are called its \textbf{inner vertices}. We allow two external faces to share vertices, but not edges. We will usually denote by $\partial m$ the boundary cycle of a map $m$ with one boundary.
\end{defnt}

\begin{defnt}
Let $m$ be a rooted map, and let $m'$ be a rooted map with simple boundaries. We say that $m'$ is a \textbf{submap} of $m$, and write $m' \subset m$, if $m$ can be obtained from $m'$, by gluing to each boundary $f_i$ of $m'$ some finite map $u_i$ with a (possibly non-simple) boundary.
\end{defnt}

For any map or graph $G$, we will denote by $V(G)$ its vertex set.\\

In this paper, we will come upon two specific types of maps:

\begin{defnt}
  A \textbf{tree} is a connected graph with no cycle. A \textbf{plane tree} is a map $T$ that, as a graph, is a tree. Since $T$ has no cycle, it is necessarily a planar map.
\end{defnt}

\begin{defnt}
An \textbf{Eulerian triangulation} is a map whose faces have all degree 3, and such that these faces can be properly bicolored, \emph{i.e.}, colored in black and white, such that all white faces are only adjacent to black faces, and \emph{vice versa}.
\end{defnt}

We will also deal with \textbf{Eulerian triangulations with a boundary}, that is, maps with one distinguished face, such that all its other faces have degree 3, and these inner faces can be properly bicolored (\emph{i.e.}, colored in black and white, such that white faces are only adjacent to black faces or to the external face, and similarly for black faces).

By convention, when we root an Eulerian triangulation with a boundary, we do so on an edge adjacent to the external face.

\subsection{Bijection with trees}
\label{subsec bij}

We consider here rooted, planar Eulerian triangulations. Bouttier, Di Francesco and Guitter {\cite{bdg}} have established a bijection between this family of maps and a particular class of labeled trees, whose construction we now briefly recall and extend.\\

Let $A$ be a rooted planar Eulerian triangulation. The orientation of the root edge of $A$ fixes a \textbf{canonical orientation} of all its edges, by requiring that orientations alternate around each vertex. By construction, edges around a given face are necessarily oriented either all clockwise, or all anti-clockwise (with respect to the plane embedding in which the face on the left of the root edge is the infinite one). This fixes the bicoloration of the faces of $A$, by setting for instance that clockwise faces are black, and anti-clockwise faces, white.

From now on, any mention of orientation refers to this canonical orientation.

\begin{figure}[htp]
\centering
\includegraphics[scale=0.8]{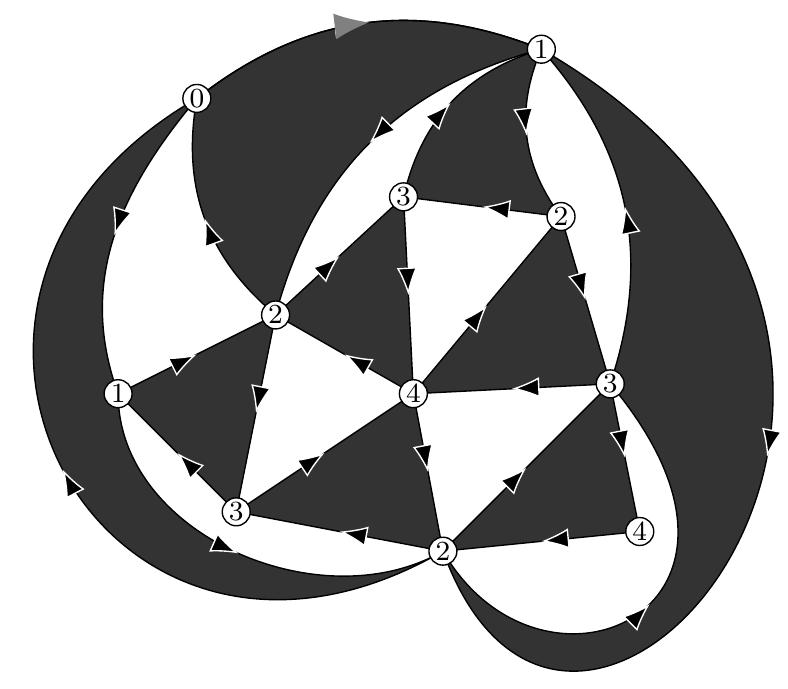}
\caption{A planar Eulerian triangulation with its canonical orientation, bicoloration and oriented geodesic distances.}
\label{EulTrigEx}
\end{figure}

For any pair $(u,v)$ of vertices of $A$, we define the \textbf{oriented distance} $\vec{d}(u,v)$ from $u$ to $v$, as the minimal length of an oriented path from $u$ to $v$. 

Let us state a useful fact. Denoting by $d$ the usual graph distance, in any Eulerian triangulation, we always have:
\begin{equation}
\label{eq bound oriented dist by usual dist}
d \leq \vec{d} \leq 2 d,
\end{equation}
as, in the worst case, the oriented distance forces a path to go through two edges of a triangle instead of just taking the third one.

We also define the \textbf{oriented geodesic distance} to any vertex of $A$, as the oriented distance from the origin (that is, the root vertex $\rho$) to that vertex. This gives a labeling of the vertices of $A$, such that the sequence of labels around any triangle, starting from the minimal label, is of the form $n \to n+1 \to n+2$. 

Let us now introduce a bit of notation that will be of use in the sequel.

\begin{defnt}
In a rooted Eulerian triangulation $A$, a \textbf{vertex of type \boldmath$n$} is a vertex whose canonical labeling by the oriented geodesic distance is \unboldmath$n$. An \textbf{edge of type \boldmath$n \to m$} is an oriented edge that starts at a vertex of type \unboldmath$n$ and ends at a vertex of type $m$. A \textbf{triangle of type \boldmath$n$} is a triangle adjacent to a vertex of type \unboldmath$n-1$, one of type $n$ and one of type $n+1$.
\end{defnt}

By keeping only the edge of type $n+1 \to n+2$ in each black face of type $n+1$, we construct a graph $T$ whose vertices, that correspond to $V(A)\setminus\{\rho\}$, are labeled by integers. Moreover, it is \textbf{well-labeled} in the sense that the labels of adjacent vertices differ by exactly 1, and that the root vertex has label 1. By construction, those labels are all positive, but we do not include it in the definition of being well-labeled, for reasons that will be clear soon.

\begin{lemma}\textnormal{\cite{bdg}}
For any planar rooted Eulerian triangulations $A$, the corresponding labeled graph $T$ is a plane tree.
\end{lemma}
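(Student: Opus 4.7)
My plan is to prove the lemma by combining an Euler characteristic count with a planar acyclicity argument. First, since every face of $A$ is a triangle, $2|E(A)| = 3|F(A)|$, and Euler's formula gives $|E(A)| = 3V - 6$ and $|F(A)| = 2V - 4$ where $V = |V(A)|$. Because each edge of $A$ separates one black face from one white face, double counting yields $3B = |E(A)| = 3W$, so there are $B = V - 2$ black faces. The construction of $T$ then produces $|V(T)| = V - 1$ and $|E(T)| = V - 2$, hence $|V(T)| - |E(T)| = 1$. It therefore suffices to show that $T$ is acyclic, as an acyclic graph with this Euler characteristic is automatically connected, hence a tree.

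For acyclicity, I would argue by contradiction. Suppose $T$ contains a cycle $C$. Since $T \subset A$ and $A$ is planar, $C$ bounds two open disks on the sphere; let $R$ be the one not containing the root $\rho$. Any oriented path from $\rho$ to a vertex of $\overline{R}$ must pass through a vertex of $C$, so the minimum of $\vec{d}(\rho,\cdot)$ over $\overline{R}$ is attained at some $v^* \in V(C)$ of label $\ell^*$. Since every edge of $T$ has canonical label change $+1$ and both $C$-edges at $v^*$ reach vertices of label $\geq \ell^*$, both are outgoing edges of type $\ell^* \to \ell^* + 1$. Each such edge is the mid-to-max edge of a unique black face $f$ in which $v^*$ plays the middle-label role and whose minimum-label vertex has label $\ell^* - 1$. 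If $f$ lies inside $R$, we obtain a vertex of $\overline{R}$ of label $\ell^* - 1 < \ell^*$, contradicting the minimality of $\ell^*$.

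The main obstacle is ruling out the scenario in which both such black faces $f$ lie outside $R$. In that case, the $R$-side face adjacent to each $C$-edge at $v^*$ must be white, and the same minimality argument forces $v^*$ to be in the minimum-label role in these white faces (any other possibility would similarly produce a vertex of label $\ell^*-1$ in $\overline{R}$). One then iterates around the angular sector at $v^*$ inside $R$: using the alternation of face colors, the alternation of edge orientations, and the face-label constraint $\{n,n+1,n+2\}$, the forced cyclic pattern (outgoing upper edge, white $v^*$-min face, incoming big-drop edge, black $v^*$-min face, \ldots) cannot be completed consistently between the two $C$-edges. As an alternative route that bypasses this combinatorial bookkeeping, one can invoke the explicit inverse construction of the Bouttier--Di Francesco--Guitter bijection, which reconstructs an Eulerian triangulation from a suitable well-labeled plane tree; the lemma then follows from this bijection being well-defined. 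Combined with the count of the first step, acyclicity implies that $T$ is a tree, and the embedding inherited from $A$ makes it a plane tree.
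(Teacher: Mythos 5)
The paper itself gives no proof of this lemma (it cites \cite{bdg}), so I assess your argument on its own. The Euler-characteristic count is correct, as is the set-up for acyclicity: the minimum of $\vec{d}(\rho,\cdot)$ over $\overline{R}$ is attained on $C$, the two $C$-edges at the minimizer $v^*$ are forced to be outgoing of type $\ell^*\to\ell^*+1$, and you get an immediate contradiction whenever the (unique) black face carrying one of those edges lies in $R$.

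The gap is in the remaining case, where both black faces $f_1,f_2$ lie outside $R$: you only assert that the ``forced cyclic pattern cannot be completed consistently'', without closing the argument, and the label bookkeeping you bring in is unnecessary. The clean finish is a pure parity clash. Each $e_i$ is adjacent to exactly one black face, namely $f_i$, so if both $f_i$ are outside $R$ then the faces $g_1,g_2$ lying on the $R$-side of $e_1,e_2$ are both white. Around $v^*$, edge orientations alternate (definition of the canonical orientation) and face colors alternate (proper bicoloration). In the closed angular sector at $v^*$ on the $R$-side, running from $e_1$ to $e_2$, let $m$ be the number of edges, so there are $m-1$ faces. Since $e_1,e_2$ are both outgoing and orientations alternate, $m$ is odd, hence $m-1$ is even. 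But $g_1,g_2$ are the first and last of the $m-1$ color-alternating faces and are both white, forcing $m-1$ to be odd — contradiction. I would also drop the fallback of ``invoking the BDG bijection'': as stated it is circular, since the lemma is precisely the well-definedness of the forward map of that bijection, and making this route non-circular amounts to reproving the bijection from the inverse side, which is no shorter than the direct argument above.
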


This tree (which is a spanning tree of the subgraph of $A$ induced by $V(A) \setminus \{\rho\}$) is naturally rooted at the corner of a vertex of type 1, that corresponds to the root edge of $A$ (see \Cref{EulTrigToTree}).

\begin{figure}[htp]
\centering
\includegraphics[scale=0.8]{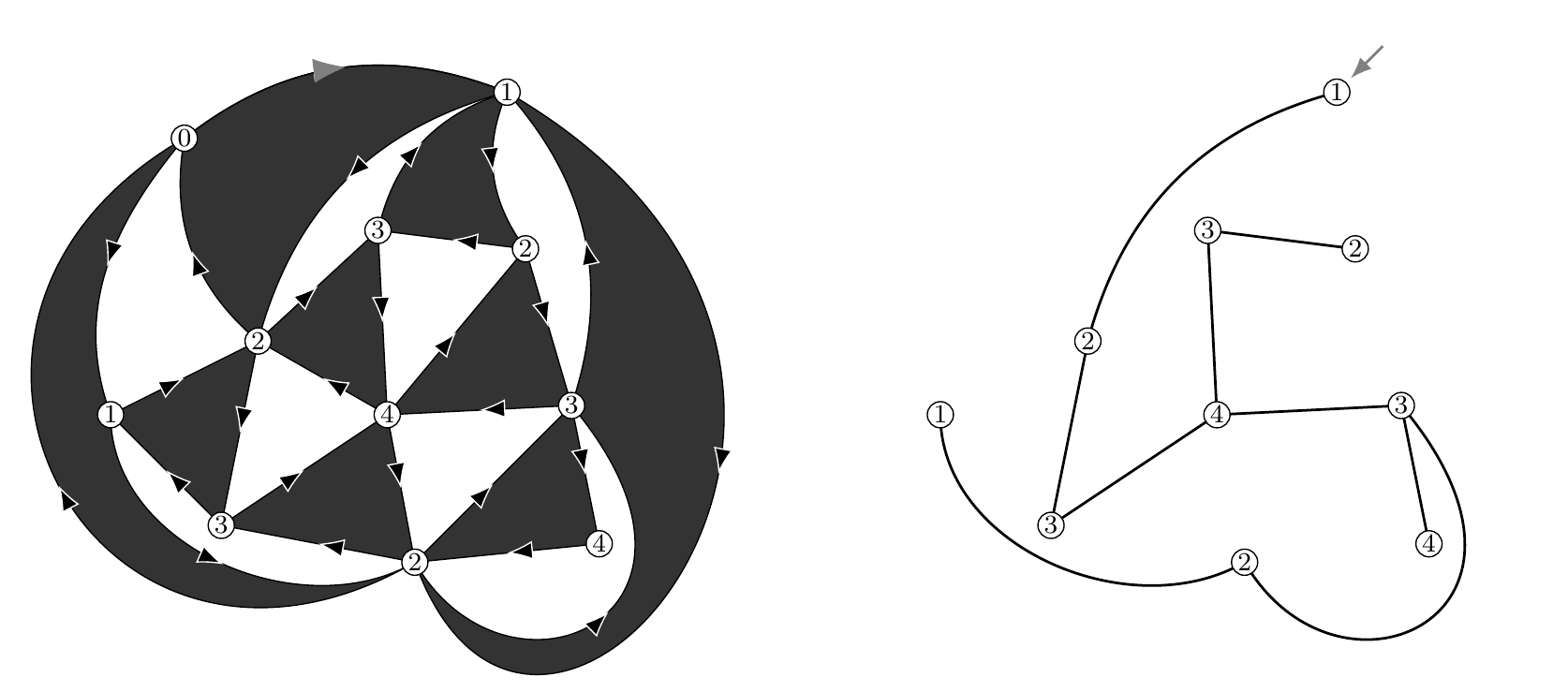}
\caption{The construction of the well-labeled tree associated to the triangulation of \Cref{EulTrigEx}.}
\label{EulTrigToTree}
\end{figure}

\begin{thm}\textnormal{\cite{bdg}}
The above mapping $\varphi_n$ from the set of rooted planar Eulerian triangulations with $n$ faces, to the set of well-labeled trees with $n$ edges with positive labels, is a bijection.
\end{thm}

Bouttier-Di Francesco-Guitter have also detailed the inverse construction from trees to triangulations, that we recall now. 

The inverse construction consists in building iteratively the black triangles of $A$. Starting from a well-labeled rooted plane tree with positive integers, the first step consists in adding an origin (labeled 0). We then create a black triangle of type 1 to the right of each edge of type $1 \to 2$, by adding edges between the origin and the two vertices of the edge. The creation of these black triangles splits the original external face into a number of white faces. By construction, the clockwise sequence of labels around any of these white faces is of the form $0 \to 2 \to \dotsm \to 2 \to 1$, where all the labels between the first and last ``2'' are greater or equal to 2, and all increments but the first are $\pm 1$. For each white face $F$ that is not already a triangle, and for each type-$(2 \to 3)$ edge whose right side is adjacent to $F$, we create a black triangle of type 2 to the right of this edge, by adding edges between its vertices and the unique vertex labeled 1 around $F$. This induces a splitting of $F$ into smaller white faces, and we repeat the procedure again, until all labels are exhausted. This yields an Eulerian triangulation $A$ rooted at the $0 \to 1$ edge linking the origin to the root of $T$ (see \Cref{TreeToEulTrig}). 

\begin{figure}[htp]
\centering
\includegraphics[scale=0.8]{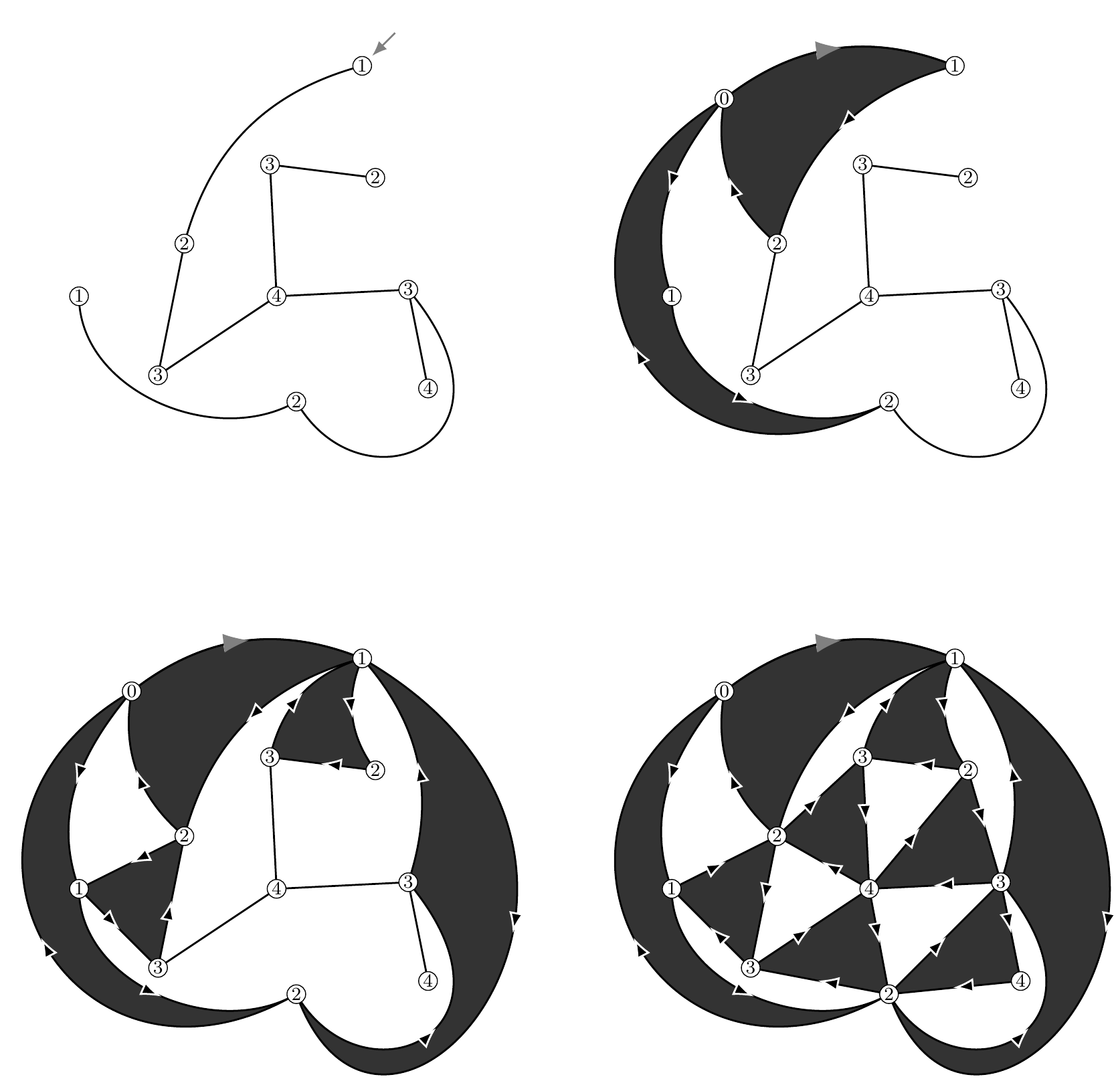}
\caption{The inverse construction of the triangulation of \Cref{EulTrigEx} from the labeled tree.}
\label{TreeToEulTrig}
\end{figure}



In the sequel, it will be more convenient to deal with trees whose labels are not necessarily positive. For that purpose, we point $A$ at some vertex $v_*$, and consider the new labeling $\ell$ on $V(A)$ given by:
\[
\ell(u)=\vec{d}(v_*,u)-\vec{d}(v_*,\rho).
\]
Let us proceed with this new labeling as we did with the oriented geodesic distance: starting from the triangulation $A$, in each black triangle of type $n+1$ (where now the integer $n$ may be nonpositive), we only keep the edge of type $n+1 \to n+2$. This construction now gives a correspondance between pointed planar Eulerian triangulations with $n$ black triangles, and well-labeled trees with $n$ edges, with no constraint on the label signs, that still maps the vertices of the triangulation, minus the distinguished vertex, to those of the tree. Let us now pay attention to the rooting: since the distinguished vertex that we add to the tree is no longer the origin of the triangulation (and, conversely, the root corner of the tree is no longer at a minimal label), we need additional information with either object, to know how to root the other. Thus, in the triangulation-to-tree direction, we start from a couple $(A, \varepsilon)$, with $\varepsilon \in \{0,1\}$: depending on the value of $\varepsilon$, we root the tree $T$ at the edge remaining from the (black) root face of $A$, either with its original direction, or the reverse one, see \Cref{PointedEulTrigToTree}. Note that we have to shift the labels of $T$ by an integer $L(A)$ between -2 and 2, so that the root corner has label 1, so that the labeling of the vertices of the tree is:
\[
l(u)=\vec{d}(v_*,u)-\vec{d}(v_*,\rho)+L(A).
\]

\begin{figure}[htp]
\centering
\includegraphics[scale=0.8]{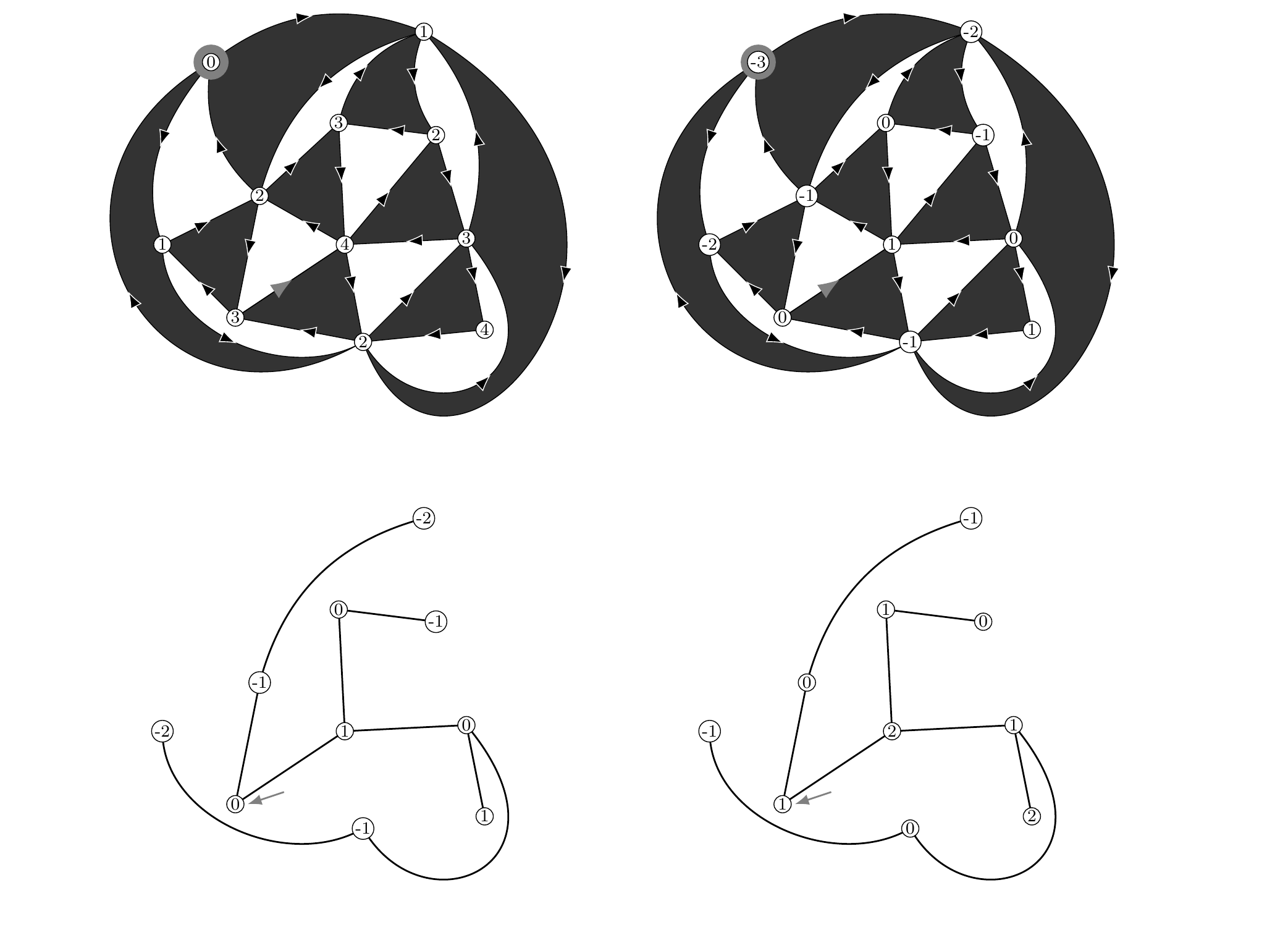}
\caption{Example of the mapping from a pointed, rooted planar Eulerian triangulation to a well-labeled trees: top left, the original triangulation $A$, top right, the triangulation with the shifted labels. To determine the rooting of the associated tree $T$, we need an additional parameter $\varepsilon \in \{0,1\}$. Choosing it to be 0, we root $T$ at the edge remaining from the root face of $A$, with its original direction: bottom left, the tree with the labeling from top right, an bottom right, the shifted labels that respect the condition that the root corner has label 1. Here, the value of the parameter $\delta$ is 1, as the root edge of $A$ has type $3 \to 4$, and its root face has type $3$.}
\label{PointedEulTrigToTree}
\end{figure}

Conversely, in the tree-to-triangulation direction, we start from a couple $(T,\delta)$, with $\delta \in \{0,1,2\}$: depending on the value of $\delta$, the root edge of $A$ is either the type-$n-1 \to n$, $n \to n+1$, or $n+1 \to n-1$ edge of the black face adjacent to the root edge of $T$ (where this face is itself of type $n$).

It is straightforward to prove similarly to the case of $\varphi_n$, that this new mapping is a bijection as well:

\begin{prop}
Let us denote by $\overline{\mathfrak{T}}_n$ the set of pointed, rooted planar Eulerian triangulations with $n$ faces, and by $\mathbb{T}_n$ the set of well-labeled trees with $n$ edges. The mapping $\psi_n$ detailed above from $\overline{\mathfrak{T}}_n \times \{0,1\}$ to $\mathbb{T}_n \times \{0,1,2\}$ is a bijection.
\end{prop}

Consider now the random variable $(\mathcal{T}^{\bullet}_n,\varepsilon_n)$, where $\mathcal{T}^{\bullet}_n$ is picked uniformly at random in $\overline{\mathfrak{T}}_n$, and $\varepsilon_n$ is uniform over $\{0,1\}$. Since these additional decorations of $\psi_n$ only influence the rooting of the obtained map, the image of $(\mathcal{T}^{\bullet}_n,\varepsilon_n)$ can be written as $(\mathscr{T}_n,\delta_n)$, where $\mathscr{T}_n$ is uniform over $\mathbb{T}_n$, and $\delta_n$ is uniform over $\{0,1,2\}$.

Furthermore, note that, if $T=\psi_n(A)$, for every vertex $u$ of $T$, the oriented distance from $v_*$ to $u$ in $A$ is given by:
\begin{equation}
\label{eq:distance-from-root-from-labels}
\vec{d}(v_*,u)=l(u) - \min_{v \in V(T)}l(v) +1.
\end{equation}

To get more general information on the oriented distances in $A$ from the labels of $T$, we need a bit of additional notation.

First observe that, with the construction of $A$ from $T$, a corner $c$ of $T$ is always incident in $A$ to an edge oriented from the first corner $c'$ encountered when going anticlockwise around the unique face of $T$, starting at $c$, and that has label $l(c)-1$ (by convention, we define the label of a corner in $T$ to be the label of the associated vertex). Indeed, either this corner was already adjacent to $c$ in $T$, or we create an edge between them when adding a black triangle to the right of the edge of type $l(c) \to l(c)+1$ that starts at $c$.

We call $c'$, the \textbf{predecessor} of $c$, and denote it by $p(c)$. (The predecessor of a corner of minimal label is naturally the corner of the origin to which it is linked in the first step of the construction.) We also call $p^k(c)$ the $k$-th predecessor of $c$, whenever it is defined.

Let us denote by $l_0$ the minimal label of the vertices of $T$. For a corner $c$ of $T$, the edge $p(c) \to c$ in $A$ is obviously of type $l(c) -l_0  \to l(c)-l_0+1$ (for the oriented distance from $v_*$). This implies that the path from $v_*$ to $c$ going through all its predecessors: $v_* \to p^{(l(c)-1)}(c) \to \dotsm \to p(c) \to c$ is a geodesic for $\vec{d}$ in $A$.

For any pair of corners $c, c'$ in $T$, we denote by $[c,c']$ the set of corners of $T$ encountered when starting from $c$, going anticlockwise around $T$, and stopping at $c'$. The property \eqref{eq:distance-from-root-from-labels} yields the following bound on oriented distances in $A$:
\begin{prop}
\label{prop dists from labels}
Let $c,c'$ be two corners of $T$, with corresponding vertices $u,v$. Then
\[
\vec{d}(u,v) \leq 2\left(l(u)+l(v)-2\min_{c'' \in [c,c']}l(c'')+2\right).
\]
\end{prop}

\begin{proof}
Let $m=\min_{c'' \in [c,c']}l(c'')$, and let $c''$ be the first corner in $[c,c']$ such that $l(c'')=m$. Then $c''$ is the $(l(c)-m)$-th predecessor of $c$. Moreover, by definition, $p(c'')$ does not belong to $[c,c']$, so that it is also the $(l(c')-m+1)$-th predecessor of $c'$. Thus, the predecessor geodesic $p(c'') \to c'' \to \dotsm \to c$, concatenated with the similar geodesic $p(c'') \to \dotsm \to c'$, is a simple path in $A$ made of $l(c) + l(c') -2m+2$ edges. However, part of it is not oriented from $c$ to $c'$, so that we lose a multiplicative factor of 2 when deducing a bound on the distance from $u$ to $v$.
\end{proof}

As will be clearer in the proof of \Cref{thm-cv-to-b-map}, this factor of 2 is really the stumbling block that prevents us from reaching the convergence to the Brownian map using only the bijective approach.

\subsection{Convergence of the labeled trees}
\label{subsec cv-of-trees}

From what precedes, starting from a uniform random rooted, pointed planar Eulerian triangulation with $n$ black faces, we get a uniform random well-labeled tree $\mathscr{T}_n$ with $n$ edges. Let us now explain how we can make sense of taking a continuum scaling limit of the latter. We first define the \textbf{contour process} of $\mathscr{T}_n$: let $e_0, e_1, \dotsm, e_{2n-1}$ be the sequence of oriented edges bounding the unique face of $\mathscr{T}_n$, starting with the root edge, and ordered counterclockwise around this face. Then let $u_i=e_i^{-}$ be the $i$-th visited vertex in this contour exploration, and set the \textbf{contour process} of $\mathscr{T}_n$ at time $i$:
\[
C_{n}(i):=d_{\mathscr{T}_n}(u_0,u_i), \, 0 \leq i \leq 2n-1,
\]
with the convention that $u_{2n}=u_0$ and $C_{n}(2n)=0$. We also extend $C_{n}$ by linear interpolation between integer times: for $0 \leq s \leq 2n$
\[
C_{n}(s)=(1-\{s\})C_{n}(\lfloor s \rfloor) + \{s\}C_{n}(\lfloor s \rfloor+1),
\]
where $\{s\}=s - \lfloor s \rfloor$ is the fractional part of $s$. Thus, the contour process $C_{n}$ is a non-negative path of length $2n$, starting and ending at 0, with increments of 1 between integer times. We will use the \textbf{rescaled contour process} of $\mathscr{T}_n$:
\[
C_{(n)}(t)=\frac{C_n(2nt)}{\sqrt{2n}}, \, 0\leq t \leq 1.
\]

We define similarly the \textbf{rescaled label function} of $\mathscr{T}_n$:
\[
L_{(n)}(t)=\frac{L_n(2nt)}{n^{1/4}}, \, 0 \leq t \leq 1,
\]
where, similarly, we start by defining $L_n(i)$ as the  label of $u_i$ for $i \in \{0, 1, \dots, 2n\}$, then interpolate between integer times.

Finally, for a continuous, non-negative function $f: [0,1] \to \mathbb{R}_+$ such that $f(0)=f(1)=0$, for any $s,t \in [0,1]$, we set
\[
\check{f}(s,t)=\inf\{f(u) \lvert s \wedge t \leq u \leq s \vee t\}.
\]

Then we have the following result:
\begin{thm}\textnormal{\cite{janson-marckert}}
\label{thm-cv-of-labeled-tree}
It holds that
\begin{equation}
\left(C_{(n)},L_{(n)}\right)\xrightarrow[n\to \infty]{(d)}(\mathbbm{e},Z),
\end{equation}
in distribution in $\mathcal{C}([0,1],\mathbb{R})^2$, where $\mathbbm{e}$ is a standard Brownian excursion, and, conditionally on $\mathbbm{e}$, $Z$ is a continuous, centered Gaussian process with covariance
\[
Cov(Z_s,Z_t)=\check{\mathbbm{e}}_{s,t}, \, s,t \in [0,1].
\]
\end{thm}

As this convergence will be crucial to ultimately prove the convergence of Eulerian triangulations to the Brownian map, to describe and analyse these triangulations, we will need to use their \emph{oriented} distances, instead of the usual graph distance.

 \subsection{Structure for oriented distance}
\label{subsec structure oriented dist}

Let us consider a rooted Eulerian triangulation $A$, equipped with its canonical orientation and oriented geodesic distance $\vec{d}$. For each type-$n$ black face $f$ of $A$, there is exactly one white face $f'$ that shares its $n+1 \to n-1$ edge. We call the union of $f$ and $f'$ a \textbf{type-\boldmath$n$ module}. Now, imagine that for each type-\unboldmath$n$ module of $A$, we trace the ``diagonal'' linking its two type-$n$ vertices, and direct it from the black triangle to the white one (see \Cref{EvenCurve}). We will call this, directing the module \textbf{left-to-right}.

\begin{figure}[htp]%
\centering%
\includegraphics[scale=1.5]{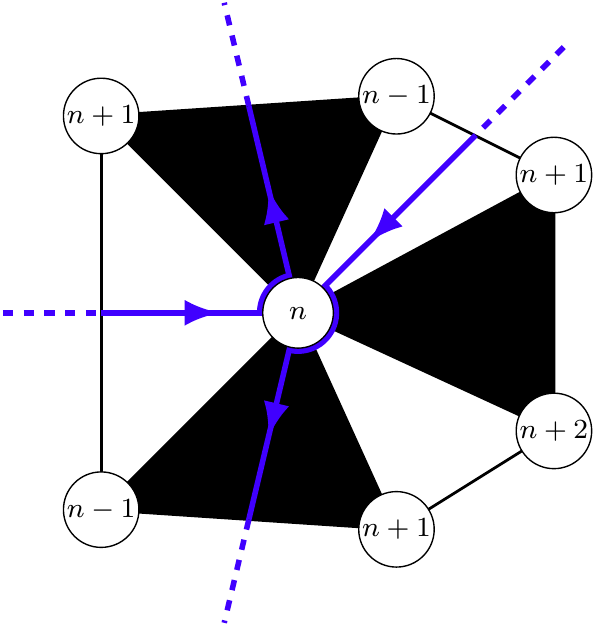}%
\caption{The union of type-$n$ module diagonals can be decomposed into a set of simple closed curves by pairing, at each vertex of type $n$, each ingoing diagonal with the next one clockwise, which is necessarily outgoing.}
\label{EvenCurve}
\end{figure}

We now explain how to describe the union of the diagonals of type-$n$ modules as a set of simple closed curves. First note that, by construction, this union of diagonals only goes through vertices of type $n$. Moreover, around each vertex $u$ of type $n$, these directed diagonals alternate between ingoing and outgoing. Indeed, around $u$, after each black type-$n$ triangle, there is necessarily a white type-$n$ triangle before the next black type-$n$ triangle. This stems from the fact that the triangles around $u$ can only be of type $n-1$, $n$ or $n+1$, and that along each edge, the oriented geodesic distance can only change by 1 or 2 (see \Cref{EvenCurve}). Now, to resolve the intersections at type-$n$ vertices, we can take the convention that if a curve arrives at a vertex $u$ by an ingoing diagonal $\delta$, it will immediately leave $u$ by the first outgoing diagonal that we encounter going clockwise around $u$, starting at $\delta$ (see \Cref{EvenCurve}).

This yields a set of closed curves that we denote by \boldmath$\mathcal{C}_n(A)$. By construction, the curves in \unboldmath$\mathcal{C}_n(A)$ separate vertices at oriented distance $n+1$ or higher from the origin, and they go counter-clockwise around these vertices (\emph{i.e.}, the origin lies on their right).

\begin{figure}[htp]%
\centering%
\includegraphics[scale=1.3]{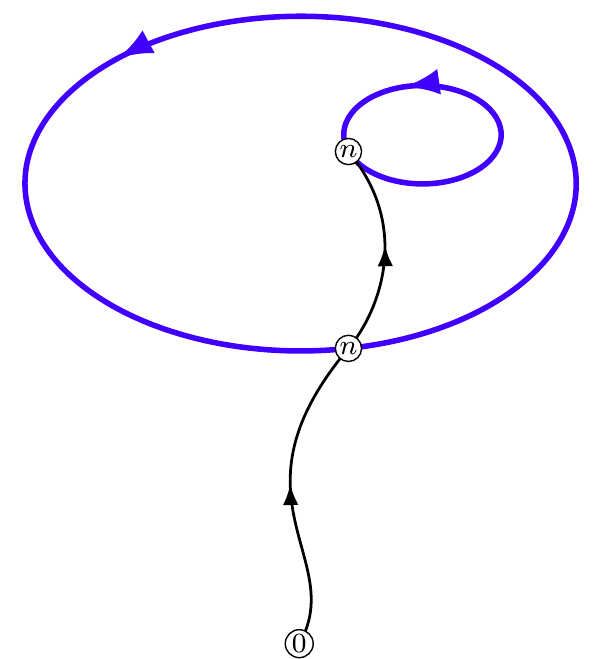}%
\caption{Two disjoint curves in  $\mathcal{C}_n(A)$ cannot encircle one other.}
\label{CurvesCantEncircleEachOther}
\end{figure}

\begin{lemma}
Let \unboldmath$A$ be a planar rooted Eulerian triangulation. For a given vertex $v$ at (oriented) distance at least $n+1$ from the root, there is a unique curve in $\mathcal{C}_n(A)$ that separates $v$ from the root.

Moreover, all curves in  $\mathcal{C}_n(A)$ are simple.
\end{lemma}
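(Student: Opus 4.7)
My plan is to reduce both claims to one key geometric fact, the ``no-nesting'' property already suggested by Figure \ref{CurvesCantEncircleEachOther}: if $\gamma_1,\gamma_2$ are two distinct loops in $\mathcal{C}_n(A)$, then $\gamma_2$ cannot lie strictly inside the ``high-type'' component of $\gamma_1$ (the component of the sphere minus $\gamma_1$ not containing $\rho$). Once this is proved, both existence and uniqueness of the separating curve for $v$, and simplicity of each individual curve, will follow by short planar-topology deductions.

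To prove no-nesting, suppose $\gamma_2$ lies strictly inside the high-type component of $\gamma_1$, and pick a diagonal $\delta\subset\gamma_2$ of some type-$n$ module $M$. Since distinct curves of $\mathcal{C}_n(A)$ share no diagonal (the pairing convention at each type-$n$ vertex matches each diagonal to a unique partner, so each diagonal belongs to exactly one curve), $\gamma_1$ is disjoint from the open module $M$; hence $M$ — a connected topological disk — lies entirely in the high-type component of $\gamma_1$, and in particular so does the unique type-$(n-1)$ vertex $w$ of $M$. But an oriented geodesic $\rho=w_0\to w_1\to\cdots\to w_{n-1}=w$ only visits vertices of types $0,1,\ldots,n-1$ and hence uses only edges of type $i\to i+1$ with $i+1\le n-1$. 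None of these can be the shared $(n-1)\to(n+1)$-edge of any type-$n$ module — the only $A$-edges ever crossed by a curve of $\mathcal{C}_n(A)$ — so the geodesic is globally disjoint from $\mathcal{C}_n(A)$, placing $w$ in the same complementary component as $\rho$, namely on the low-type side of $\gamma_1$, a contradiction.

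Given no-nesting, existence of a separating curve for $v$ of oriented distance $m\ge n+1$ comes from taking an oriented geodesic $\rho=v_0\to\cdots\to v_m=v$, identifying the edge $v_n\to v_{n+1}$ as a boundary edge of a type-$n$ module whose diagonal belongs to some $\gamma\in\mathcal{C}_n(A)$ with $v_{n+1}$ on its high-type side (by the orientation convention), and observing that the tail $v_{n+1}\to\cdots\to v$ uses only edges between vertices of type $\ge n+1$, hence meets no curve of $\mathcal{C}_n(A)$; so $v$ also sits on the high-type side of $\gamma$. Uniqueness is then immediate: two curves each separating $v$ from $\rho$ would contain $v$ in both their high-type components, forcing one to nest in the other. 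Simplicity follows by the same idea: if $\gamma\in\mathcal{C}_n(A)$ revisited some type-$n$ vertex $u$ through two different in/out pairs of the local pairing, splitting at $u$ would exhibit $\gamma$ as a union of two closed sub-loops that still respect the pairing convention at every vertex they meet; by planarity one would lie inside the high-type region of the other, and the no-nesting argument (which only uses the pairing structure and the type-$(n-1)$-vertex ingredient) applies verbatim to give a contradiction.

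The main obstacle is really the no-nesting step. It is conceptually clean but relies on combining three ingredients — the type-$(n-1)$ vertex hidden inside every type-$n$ module, the orientation convention telling us which side of a diagonal is high-type, and the elementary but crucial fact that short oriented geodesics through low-type vertices avoid $\mathcal{C}_n(A)$ altogether — so it is the one place where the argument has to be written out carefully. Everything else reduces to it by planar topology.
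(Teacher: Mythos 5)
Your overall strategy matches the paper's: the engine of both proofs is the observation that an oriented geodesic from $\rho$ to a low-type vertex visits only vertices of type $\le n-1$ and uses only edges of type $i\to i+1$ with $i\le n-2$, so it cannot meet $\mathcal{C}_n(A)$ (whose curves pass only through type-$n$ vertices and cross only $n+1\to n-1$ edges). The paper applies this to a type-$n$ vertex on the ``inner'' curve; you apply it to the type-$(n-1)$ vertex of a module on the inner curve. Both are valid, and your no-nesting lemma is a clean reformulation.

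There are, however, two places where the argument as written does not close.

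First, the uniqueness step. You write that two curves separating $v$ would ``force one to nest in the other'', which is only immediate when the curves are disjoint. Two distinct curves of $\mathcal{C}_n(A)$ can meet at a type-$n$ vertex, and then neither lies \emph{strictly} inside the other's high-type component, so no-nesting as stated does not directly apply. The paper handles this case separately with a purely local observation: by the clockwise-pairing resolution rule, two curves through the same vertex cannot encircle the same region (Figure~\ref{NiceResolutionRules}). Your geodesic argument can in fact absorb the intersecting case --- it actually shows that \emph{every} type-$(n-1)$ vertex lies on the $\rho$-side of \emph{every} curve, hence every open diagonal of $\gamma_2$ lies in the low-type component of $\gamma_1$, hence $\gamma_2\subseteq\overline{L_1}$, from which $H_1=H_2$ and $\gamma_1=\gamma_2$ follow by a short topological argument --- but that is not what ``forcing one to nest in the other'' says, and the extra step is exactly the content of the intersecting case.

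Second, the existence step: you identify $v_n\to v_{n+1}$ as a boundary edge of a type-$n$ module, but the two faces incident to a type-$(n\to n+1)$ edge may both be of type $n+1$ (each having a third vertex of type $n+2$), in which case this edge lies in no type-$n$ module at all. Existence does hold, but it is a consequence of the construction of $\mathcal{C}_n(A)$ itself (as the paper states without proof just before the lemma), not of this particular edge. The fix is to argue instead from the vertex $v_n$: around any type-$n$ vertex there must be at least one type-$n$ triangle (two faces of types $n-1$ and $n+1$ incident to $v_n$ cannot share an edge), so $v_n$ lies on some curve $\gamma$, and the tail $v_{n+1}\to\cdots\to v$ avoids $\mathcal{C}_n(A)$, so $v$ sits in the same component as $v_{n+1}$; one still has to identify that component as the high-type side of $\gamma$, which needs a little more care. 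Finally, a small technical remark on simplicity: after splitting a non-simple curve at a revisited vertex, the sub-loops need not be simple; one should pass to an innermost simple sub-loop before applying the no-nesting argument.
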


\begin{proof}
First consider two disjoint curves in $\mathcal{C}_n(A)$ that separate the same vertex $v$ from the origin. Necessarily, a geodesic path from the root to a vertex belonging to one of them should go through the other, and thus have length at least \unboldmath$n+1$ (see \Cref{CurvesCantEncircleEachOther}).

Now, if two curves of $\mathcal{C}_n(A)$ intersect at a vertex of type $n$, then by our resolution rule, they cannot go counterclockwise around the same region of $A$ (see \Cref{NiceResolutionRules}), and, as explained before, these are precisely the regions they separate from the origin.

This rule also implies that a curve $\mathscr{C}$ in $\mathcal{C}_n(A)$ cannot go twice through the same type-$n$ vertex. Indeed, if that were the case, then $\mathscr{C}$ would separate from the origin vertices of oriented distance $n-1$ and less, so that any oriented geodesic from the origin to these vertices should be of length at least $n+1$ (see \Cref{NiceResolutionRules}).
\end{proof}

\begin{figure}[htp]%
\centering%
\includegraphics[scale=1.3]{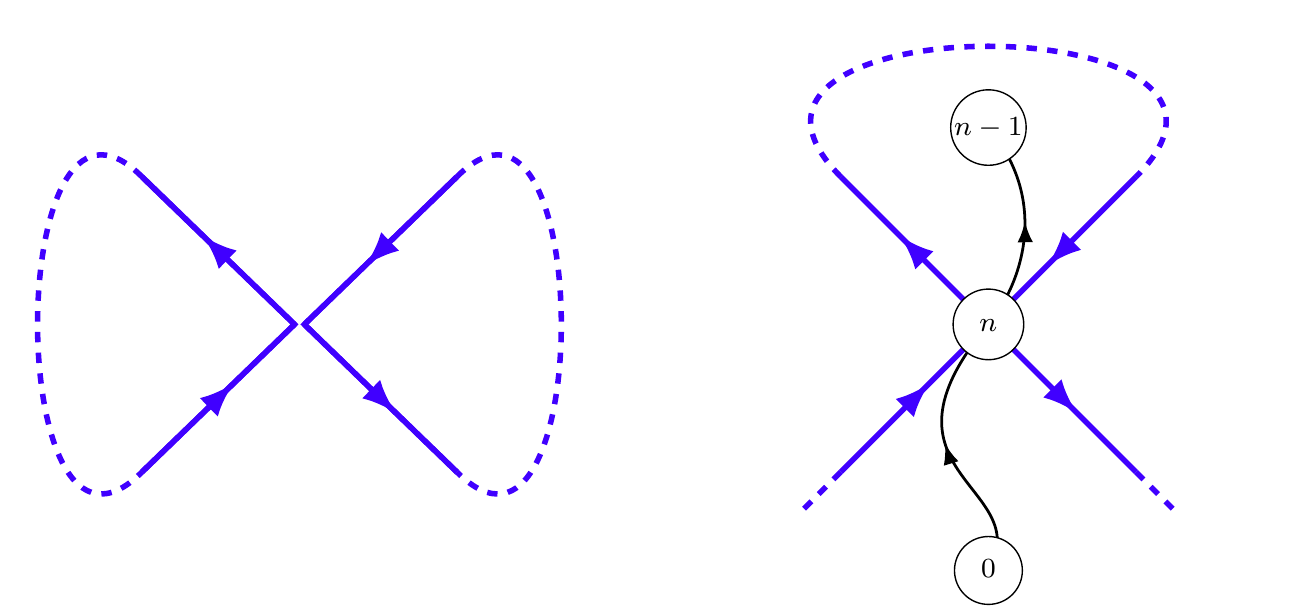}%
\caption{From the rule we have chosen to resolve intersections (see \Cref{EvenCurve}), two curves in $\mathcal{C}_n(A)$ going through the same vertex cannot encircle one other (left), and one curve cannot go through the same vertex twice (right), as it would imply that all oriented paths from the origin to at least one vertex of type $n-1$ or less, would have at least length $n+1$.}
\label{NiceResolutionRules}
\end{figure}

\begin{figure}[htp!]
\centering
\includegraphics[scale=0.7]{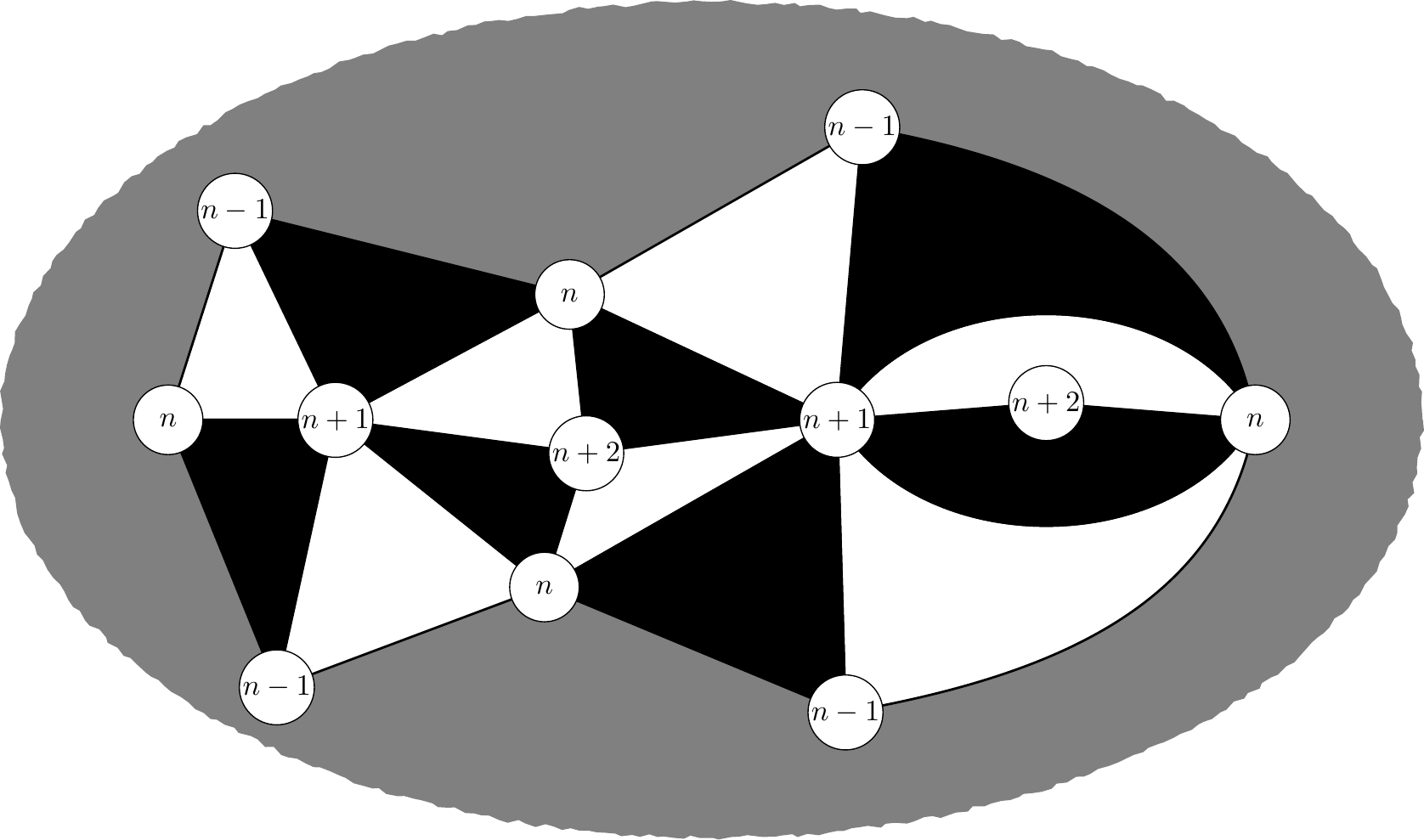}\\
\vspace{2em}
\includegraphics[scale=0.7]{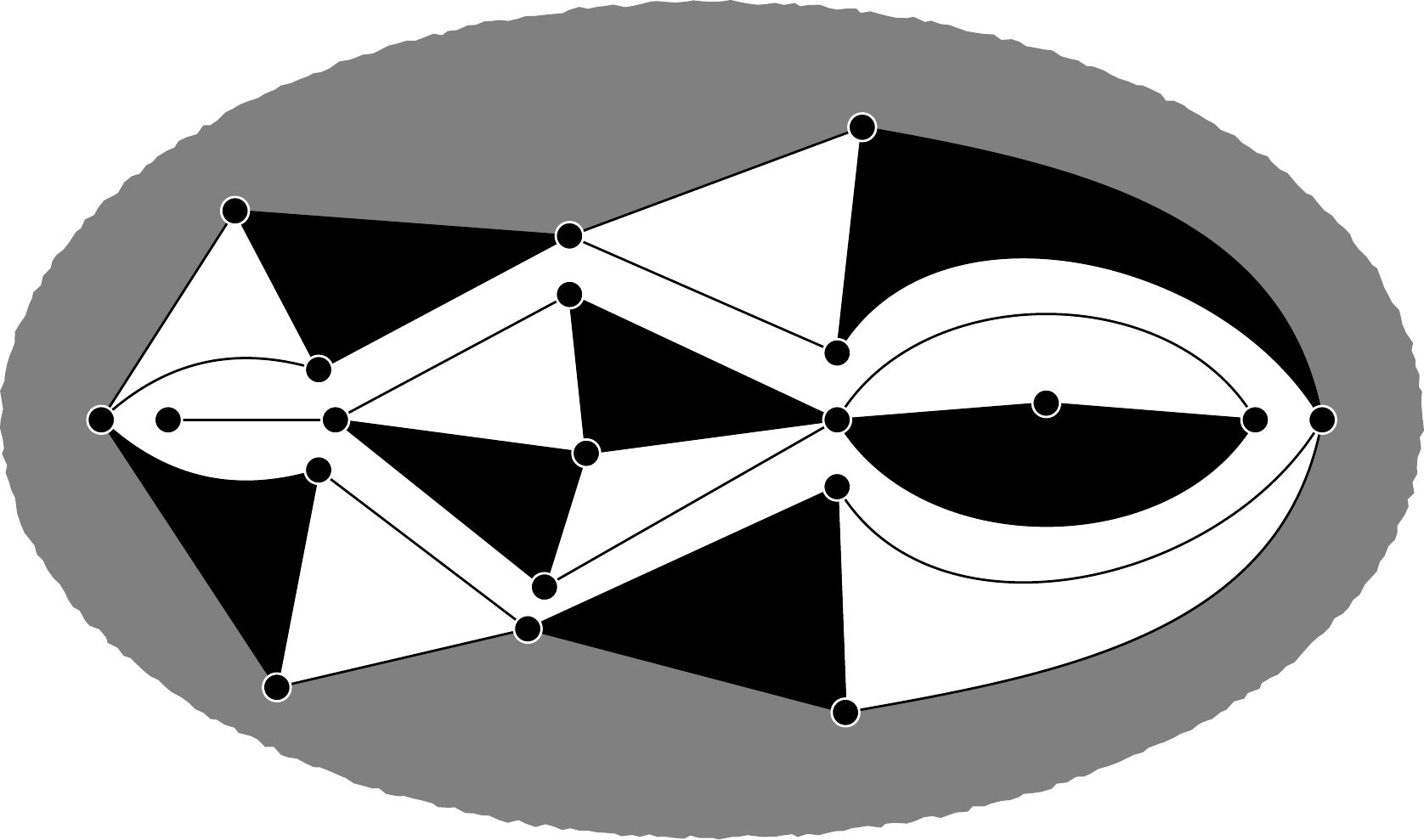}
\caption{In an Eulerian triangulation (top), we cut along the edges of type $n \to n+1$ to separate the ball of radius $n$ from the components of its complement (bottom). This possibly induces the duplication of edges and vertices in the ball.}
\label{bmap LocalBallEulTrig}
\end{figure}

We define the \textbf{ball} \boldmath$\mathcal{B}_{n}(A)$ as the submap of  \unboldmath$A$ obtained by keeping only the faces and edges of $A$ incident to at least a vertex at distance $n-1$ or less from the origin, cutting along the edges of type $n \to n+1$, and filling in the produced holes by simple faces (see \Cref{bmap LocalBallEulTrig} for a local depiction of this procedure). Thus, in $\mathcal{B}_{n}(A)$, for each closed curve $\mathscr{C} \in \mathcal{C}_{n}(A)$, we have replaced all faces that $\mathscr{C}$ separates from the root, by a single, simple face. In particular, if two faces of $A$ of type $n$ share a type-$(n \to n+1)$ edge, in $\mathcal{B}_{n}(A)$ their respective type-$(n \to n+1)$ edges are not identified, so that their common type-$(n+1)$ vertex gives rise to two vertices in $\mathcal{B}_{n}(A)$ (see \Cref{ModulesEx}). Two type-$n$ faces $f, f'$ may also share a type-$(n+1)$ vertex $v$ but no edge: in that case, it means that $v$ is also shared by faces of types $n+1$, so that we would need to add these faces and the type $n \to n+1$ edges they share with $f$ and/or $f'$, in order to identify the type-$(n+1)$ vertices of $f$ and $f'$ into $v$. Note that as $\mathcal{B}_{n}(A)$ contains all the type-$(n-1 \to n)$ edges of $A$, type-$n$ vertices of $A$ are never duplicated in $\mathcal{B}_{n}(A)$.

\begin{figure}[htp]%
\centering%
\includegraphics[scale=0.6]{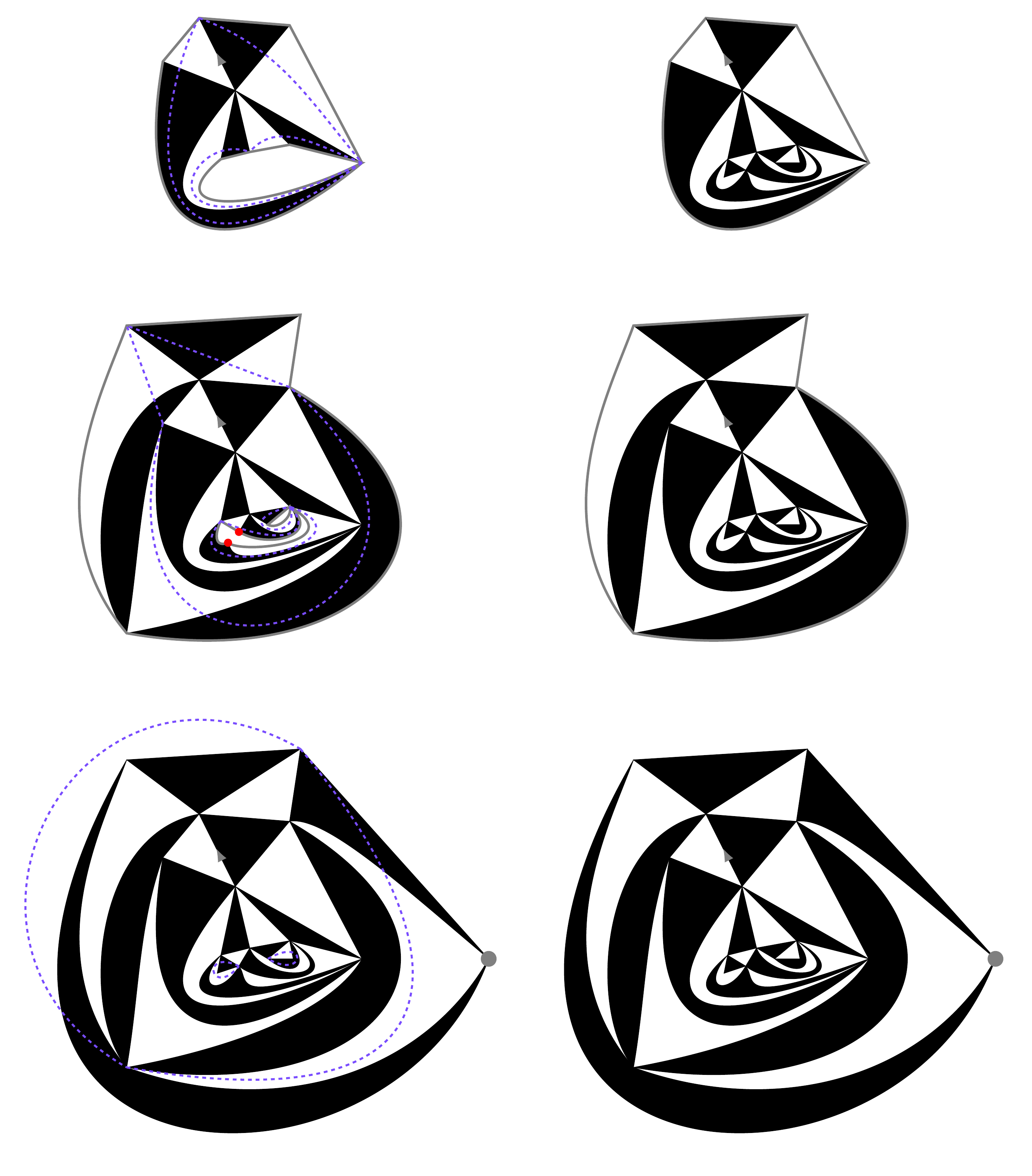}%
\caption{A rooted, pointed Eulerian triangulation (bottom right) and its balls (left) and corresponding hulls (right). The module ``diagonals'' are in dashed purple, and the boundaries of the balls and hulls in solid gray. We can see an example of duplication of vertices in the ball of radius
2, marked in red.}
\label{ModulesEx}
\end{figure}

Thus, $\mathcal{B}_{n}(A)$ is an Eulerian triangulation with simple boundaries\footnote{Note that these boundaries may share a vertex, but not an edge.}, as many as curves in $\mathcal{C}_{n}(A)$. Moreover, the faces of $\mathcal{B}_{n}(A)$ adjacent to these boundaries compose the type-$n$ modules of $A$, so that each part of $\partial\mathcal{B}_{n}(A)$ is \textbf{alternating}, that is, the adjacent faces alternate between black and white.

Let us also formalize the definition of the complement of $\mathcal{B}_{n}(A)$. It is naturally obtained from $A$ by removing the faces and edges of $A$ that are incident to at least a vertex at distance $n-1$ or less from the origin. Note that it is made of as many connected components as there are curves in $\mathcal{C}_{n}(A)$, as it is also the number of boundaries of $\mathcal{B}_{n}(A)$. Consider some $\mathscr{C} \in \mathcal{C}_{n}(A)$, and write $M(A, \mathscr{C})$ for the corresponding connected component of $A \setminus \mathcal{B}_{n}(A)$. $M(A, \mathscr{C})$ is a planar Eulerian triangulation with a boundary, which has the same length as the corresponding one of $\mathcal{B}_{n}(A)$, and is also alternating. However, the boundary of $M(A, \mathscr{C})$ is not necessarily simple. More precisely, a type-$(n+1)$ vertex $v$ of $A$ that sits on the boundary of $M(A, \mathscr{C})$ is attached to the type-$(n \to n+1)$ edges and type-$(n+1)$ faces that are adjacent to $v$ in $A$, as they were excluded from $\mathcal{B}_{n}(A)$, so that $v$ may be a separating vertex in the external face of $M(A, \mathscr{C})$. This is not the case for type-$n$ vertices, as $\mathcal{B}_{n}(A)$ contains all the type-$(n-1 \to n)$ edges and type-$n$ faces of $A$. Thus, the boundary of $M(A, \mathscr{C})$ can have separating vertices, but only on boundary vertices that have, in clockwise order, a black triangle before them and a white one after (as it corresponds to the type-$(n+1)$ vertices of $A$). We call such boundary conditions \textbf{semi-simple}.\\

Let $v$ be a distinguished vertex of $A$ at oriented distance at least $n+2$ from the root. We can now define the \textbf{hull} \boldmath$\mathcal{B}^{\bullet}_{n}(A)$ of \unboldmath$\mathcal{B}_{n}(A)$, as the union of $\mathcal{B}_{n}(A)$ and all the connected components of its complement that do not contain $v$. More precisely, for each curve $\mathscr{C} \in \mathcal{C}_{n}(A)$ that does not separate $v$ from the origin, we glue the boundary of $M(A, \mathscr{C})$ to the corresponding boundary of $\mathcal{B}_{n}(A)$. This operation is well-defined, as the latter is simple, and they both have the same length. The resulting map $\mathcal{B}^{\bullet}_{n}(A)$ has only one boundary, that corresponds to $\mathscr{C}^*$, the unique curve of $\mathcal{C}_{n}(A)$ that separates $v$ from the origin.\\

In the sequel, we will use the notion of \textbf{local distance} between rooted maps. Let $\mathcal{M}$ be the set of finite rooted maps, for  $m, m' \in \mathcal{M}$, we define the \textbf{local distance} between $m$ and $m'$ as
\[
d_{loc}(m,m')=\frac{1}{1+\sup\{R \geq 1 \lvert \mathcal{B}^{d}_{R}(m)=\mathcal{B}^{d}_{R}(m')\}},
\]
where $\mathcal{B}^{d}_{R}(m)$ is defined similarly as before, replacing $\vec{d}$ by the usual graph distance. It is clearly a distance on $\mathcal{M}$, and the completion $(\overline{\mathcal{M}},d_{loc})$ of the space  $(\mathcal{M},d_{loc})$ is a Polish space. The notion of convergence in this space will be called \textbf{local limit}. The elements of $\overline{\mathcal{M}} \setminus \mathcal{M}$ are thus \textbf{infinite} maps that can be defined as the local limit of finite rooted maps.

Note that, from \eqref{eq bound oriented dist by usual dist}, if $A_n$ is a sequence of rooted Eulerian triangulations (possibly with a boundary), and $A$ a rooted planar map, the property that all oriented balls of $A_n$ converge to those of $A$, as $n$ tends to infinity, is equivalent to the same property for non-oriented balls, which is precisely the definition of the convergence of $A_n$ to $A$ in the sense of local limits of rooted planar maps.

As the topology induced by the local distance is what will really matter in the sequel, rather than the actual value of the local distance between two maps, we can forget the general definition of the local distance, and just compare the oriented balls of Eulerian triangulations.

\section{Convergence to the Brownian map}
\label{sec cv to b map}
We will now state and give the proof of the main result of this paper.\\

Before doing so, let us recall the construction of the Brownian map, and introduce some notation. As in \Cref{subsec cv-of-trees}, we write $\mathbbm{e}$ for a standard Brownian excursion, and $Z$ for the ``head'' of the Brownian snake driven by $\mathbbm{e}$, \emph{i.e.}, conditionally on $\mathbbm{e}$, $Z$ a continuous, centered Gaussian process on $[0,1]$ with covariance
\[
Cov(Z_s,Z_t)=\check{\mathbbm{e}}_{s,t}, \, s,t \in [0,1].
\]

The Brownian excursion $\mathbbm{e}$ encodes the \textbf{Continuum Random Tree} $(\mathscr{T}_{\mathbbm{e}},d_{\mathbbm{e}})$, defined by:
\begin{align*}
d_{\mathbbm{e}}(s,t)&=\mathbbm{e}(s) + \mathbbm{e}(t) - 2\check{\mathbbm{e}}_{s,t}  \\
\mathscr{T}_{\mathbbm{e}}&=[0,1]/\{d_{\mathbbm{e}}=0\}.
\end{align*}
The function $d_{\mathbbm{e}}$, which is a pseudo-distance on $[0,1]$, induces a true distance on $\mathscr{T}_{\mathbbm{e}}$ via the canonical projection $p_{\mathbbm{e}}: [0,1] \to \mathscr{T}_{\mathbbm{e}}$, to $\mathscr{T}_{\mathbbm{e}}$.

Almost surely, there is a unique $s \in [0,1]$ such that $Z_s=\inf Z$ {\cite{legall-weill}}. We then denote this point by $s_*$, and $x_*=p_{\mathbbm{e}}(s_*)$ its projection on $\mathscr{T}_{\mathbbm{e}}$.\\

We define, for $s \leq t \in [0,1]$, 
\[
D^{\circ}(s,t)=D^{\circ}(t,s):=Z_s+Z_t-2\max(\min_{r \in [s,t]} Z_r, \min_{r \in [t,1] \cap [0,s]}Z_r).
\]

This function does not satisfy the triangle inequality, which leads us to introduce
\[
D^*(s,t):=\inf\left\{\sum_{i=1}^kD^{\circ}(s_i,t_i) \, \Bigg\lvert \, k\geq 1, \, s_1=s, t_k=t, \, d_{\mathbbm{e}}(t_i,s_{i+1})=0 \ \ \forall \, i \in \{1, 2, \dots, k \}\right\}.
\]
We can now define the \textbf{Brownian map}, by setting $\mathbf{m}_{\infty}=[0,1]/\{D^*=0\}$, and equipping this space with the distance induced by $D^*$, which we still denote by $D^*$.

Let $\mathcal{T}_n$ be a uniform random rooted Eulerian planar triangulation with $n$ black faces, equipped with its usual graph distance $d_n$, and its oriented pseudo-distance $\vec{d}_n$. Let $\overline{\mathcal{T}}_n$ be the triangulation  $\mathcal{T}_n$ together with a distinguished vertex $o_n$ picked uniformly at random. Recall from \Cref{subsec bij} that $\overline{\mathcal{T}}_n$ is the image, by the BDG bijection, of a random labeled tree $\mathscr{T}_n$, uniformly distributed over the set of well-labeled rooted plane trees with $n$ edges. We denote by $l_n$ the labels of the vertices of $\mathscr{T}_n$, and enumerate as in \Cref{subsec bij} the vertices (or rather, the corners) of $\mathscr{T}_n$, by setting $u_i^{(n)}$ to be the $i$-th vertex visited by the contour process of $\mathscr{T}_n$, for $0 \leq i \leq 2n$. As before, we denote by $L_{(n)}$ the rescaled labels of the vertices of $\mathscr{T}_n$.

We define the symmetrization $\overleftrightarrow{d}_{\! \!n}$ of $\vec{d}_n$, by
\[
\overleftrightarrow{d}_{\! \!n}(u,v)=\frac{\vec{d}_n(u,v)+\vec{d}_n(v,u)}{2}.
\] 

We also define a rescaled oriented distance $\vec{D}_{(n)}$ on $[0,1]^2$, by first setting, for $i, j \in \{0,1, \dots, 2n\}$:
\[
\vec{D}_{(n)}\left(\frac{i}{n},\frac{j}{n}\right)=\frac{\vec{d}_{n}(u_i^{(n)},u_j^{(n)})}{n^{1/4}},
\]
then linearly interpolating to extend $\vec{D}_{(n)}$ to $[0,1]^2$.

We define similarly $D_{(n)}$ from $d_n$, as well as $\overleftrightarrow{D}_{(n)}$ from $\overleftrightarrow{d}_n$.

\begin{thm}
\label{thm-cv-to-b-map}
Let $(\mathbf{m}_{\infty},D^*)$ be the Brownian map. There exists some constant  $\mathbf{c}_0 \in [2/3,1]$, such that the following convergence in distribution holds:
\[
\left(C_{(n)},L_{(n)},\vec{D}_{(n)},\overleftrightarrow{D}_{(n)},D_{(n)}\right) \xrightarrow[n \to \infty]{(d)} \left(\mathbbm{e},Z,D^*,D^*,\mathbf{c}_0D^* \right).
\]

Consequently, we have the following joint convergences 
\begin{align*}
n^{-1/4}\cdot(V(\mathcal{T}_n),\overleftrightarrow{d}_{\! \!n}) &\xrightarrow[n \to \infty]{(d)} \phantom{\mathbf{c}_0 }\ \   (\mathbf{m}_{\infty},D^*) \\
n^{-1/4}\cdot(V(\mathcal{T}_n),d_n) &\xrightarrow[n \to \infty]{(d)} \mathbf{c}_0 \cdot (\mathbf{m}_{\infty},D^*), 
\end{align*}
for the Gromov-Hausdorff distance on the space of isometry classes of compact metric spaces.
\end{thm}

Note that we would like to have a statement similar to the one on $\overleftrightarrow{d}_{\! \!n}$ for $\vec{d}_n$. However, as $\vec{d}_n$ is not a proper distance, it does not induce a metric space structure on $V(\mathcal{T}_n)$. Thus, we would need to generalize the Gromov-Hausdorff topology to spaces equipped with a non-symmetric pseudo-distance, to be able to write such a statement.

\begin{proof}
We admit here \Cref{thm-total-asympt-prop-in-finite-trig}, that will be proven later in the paper: for every $\varepsilon > 0$, we have
\begin{equation}
\label{eq asympt prop in proof cv b map}
\Prob{\sup_{x,y \, \in V(\mathcal{T}_n)} \lvert d_n(x,y) - \mathbf{c}_0\vec{d}_n(x,y) \rvert > \varepsilon n^{1/4}} \xrightarrow[n \to \infty]{} 0.
\end{equation}

We proceed similarly to the case of usual triangulations in {\cite{legall}}.

For this whole proof, we work with the pointed triangulation $\overline{\mathcal{T}}_n$, but, as all Eulerian triangulations with $n$ black faces have the same number of vertices, this does not introduce any bias for the underlying, non-pointed triangulation, so that the final statement also holds for $\mathcal{T}_n$.

We have, from \Cref{prop dists from labels}, for any $0 \leq i < j \leq 2n$:
\begin{equation}
\label{eq rescaled distances bounded by labels with 2}
\vec{D}_{(n)}\left(\frac{i}{n}, \frac{j}{n}\right) \leq \frac{2}{n^{1/4}}  \left(l_n(u_i^{(n)}) + l_n(u_j^{(n)}) - 2\max(\min_{k \in \{i, \dots, j\}}l_n(u_k^{(n)}),\min_{k \in \{j, \dots, 2n\}\cup \{0, \dots, i\}}l_n(u_k^{(n)})) + 2 \right).
\end{equation}
As noted before, if we did not have the global multiplicative factor of 2 in \eqref{eq rescaled distances bounded by labels with 2}, we could then proceed as for usual triangulations and other well-known families of planar maps. Thus, the rest of this proof will consist in proving that \Cref{thm-total-asympt-prop-in-finite-trig} makes it possible to ``get rid'' of this cumbersome factor.

We claim that the sequence of the rescaled distances $(\vec{D}_{(n)}(s,t))_{s,t \in [0,1]}$ is tight. First note that, for any $s,s',t,t' \in [0,1]$, we have
\begin{equation}
\label{eq triangle ineq for tightness}
\lvert \vec{D}_{(n)}(s,t) - \vec{D}_{(n)}(s',t') \rvert \leq 2 \left(\vec{D}_{(n)}(s,s') +\vec{D}_{(n)}(t',t)\right).
\end{equation}
Indeed, $\vec{D}_n$, like $\vec{d}_n$, satisfies the triangle inequality, so that
\begin{align*}
 \vec{D}_{(n)}(s,t) - \vec{D}_{(n)}(s',t') &\leq \vec{D}_{(n)}(s,s') +\vec{D}_{(n)}(t',t)\\
 \vec{D}_{(n)}(s',t') - \vec{D}_{(n)}(s,t) &\leq \vec{D}_{(n)}(s',s) +\vec{D}_{(n)}(t,t'),
\end{align*}
which gives \eqref{eq triangle ineq for tightness} when taking into account that, while $\vec{D}_{(n)}$ is not symmetric, we do have, for any $s,t \in [0,1]$, $\vec{D}_{(n)}(s,t) \leq 2 \vec{D}_{(n)}(t,s)$. Then, using \eqref{eq rescaled distances bounded by labels with 2} and \Cref{thm-cv-of-labeled-tree}, we get that, if $\lvert s -s'\rvert \vee \lvert t-t'\rvert \leq \eta$, then, for $n$ large enough, the right-hand side of \eqref{eq triangle ineq for tightness} is smaller than $4 \omega(Z,\eta) + \varepsilon$, where we denote by $\omega(Z,\eta)$ the supremum $\sup_{\lvert I \rvert \leq \eta}\omega(Z,I)$, and $\omega(f,I)$ is the modulus of continuity of $f$ on the interval $I$.

Thus, along a subsequence, we have the joint convergence:
\begin{equation}
\label{eq joint convergence along subsequence}
\left(C_{(n)},L_{(n)},\vec{D}_{(n)}\right) \xrightarrow[n\to \infty]{(d)} (\mathbbm{e},Z,D),
\end{equation}
for some random continuous process $D$ on $[0,1]^2$. In the rest of this proof, we fix a subsequence so that \eqref{eq joint convergence along subsequence} holds, and work along this subsequence.

Note that, from \Cref{thm-total-asympt-prop-in-finite-trig}, we also have the joint convergence of $D_{(n)}$ to $\mathbf{c}_0 D$. This already implies that $D$ is symmetric, and thus is a pseudo-metric. We now want to show that $D=D^*$ a.s., which will conclude the proof, since this will imply the uniqueness of the limit $D$.

First, it is straightforward to get from \eqref{eq:distance-from-root-from-labels} and \eqref{eq joint convergence along subsequence} that, for any $s \in [0,1]$:
\begin{equation}
\label{eq:extracted-lim-of-distance-from-root-equals-brownian-distance}
D(s_*,s)=Z_s - \inf Z.
\end{equation}

We will now show that a.s., for every $s,t \in [0,1]$,
\begin{equation}
\label{eq:extracted-lim-of-distance-smaller-than-brownian-pseudodist}
D(s,t)\leq D^{\circ}(s,t).
\end{equation}
 To prove this claim, let us get back to $\overline{\mathcal{T}}_n$ and $\mathscr{T}_n$. From \eqref{eq asympt prop in proof cv b map}, for any $\varepsilon>0$ and any $\delta \in (0,1)$, for any $n$ large enough, the event
\[
\Big\lvert d_{n}\left(u,v\right) - \mathbf{c}_0\vec{d}_{n}\left(v,u\right) \Big\rvert \leq \varepsilon n^{1/4} \ \ \ \forall u,v \in V(\overline{\mathcal{T}}_n)
\]
holds with probability at least $1 - \delta$.

On that event, we have, for any $u,v,w \in V(\overline{\mathcal{T}}_n)$,
\begin{align}
\label{eq:oriented-dist-almost-trig-ineq}
\vec{d}_n(u,v) &\leq \vec{d}_n(u,w) + \vec{d}_n(w,v) \leq \frac{1}{\mathbf{c}_0}d_n(u,w)+\vec{d}_n(w,v) +\varepsilon n^{1/4} \nonumber\\
&\leq \vec{d}_n(w,u) + \vec{d}_n(w,v)  +2\varepsilon n^{1/4}.
\end{align}

Thus, going back to the proof of \Cref{prop dists from labels}, when estimating oriented distances from the length of the concatenation of two predecessor geodesics, rather than having to multiply this length by 2, we just need to add $2\varepsilon n^{1/4}$.

Therefore, for any $0 \leq i <j \leq 2n$,
\begin{align*}
&\vec{D}_{(n)}\left(\frac{i}{n}, \frac{j}{n}\right) \\
\leq &L_{(n)}\left(\frac{i}{n}\right) + L_{(n)}\left(\frac{j}{n}\right) - 2\max\left(\min_{k \in \{i, \dots, j\}} L_{(n)}\left(\frac{k}{n}\right),\min_{k \in \{j, \dots, 2n\}\cup \{0, \dots, i\}} L_{(n)}\left(\frac{k}{n}\right)\right) + \frac{2}{n^{1/4}} +2\varepsilon.
\end{align*}

Thus, letting $n \to \infty$ (along our subsequence), for any $\varepsilon >0$, we have $D \leq D^{\circ} + 2\varepsilon$ a.s., so that we get the desired inequality \eqref{eq:extracted-lim-of-distance-smaller-than-brownian-pseudodist}.

Moreover, as $D$ satisfies the triangle inequality, we have
\begin{equation}
\label{eq:extracted-lim-of-distance-smaller-than-brownian-true-dist}
D(s,t)\leq D^{*}(s,t) \ \ \ \ \ \ \ \forall s, t \in [0,1] \text{ a.s.}
\end{equation}

To replace this inequality by an equality, it now suffices to show that, for $U,V$ chosen uniformly and independently at random in $[0,1]$, and independently from the rest, $D(U,V)\overset{(d)}{=}D^*(U,V)$. Indeed, this would imply $D=D^*$ a.e., and thus $D=D^*$ since both are continuous. To prove this, from \eqref{eq:extracted-lim-of-distance-from-root-equals-brownian-distance}, it is enough to show that $D(U,V) \overset{(d)}{=} D(s_*,U)$.

To prove this, let us get back to the discrete level for a moment. Let $u_n, v_n$ be two vertices of $\mathcal{T}_n$ chosen independently and uniformly at random. As $\mathcal{T}_n$ re-pointed at $u_n$ has the same law as $\overline{\mathcal{T}}_n$, we have
\begin{equation}
\label{eq rerooting dist}
\vec{d}_n(u_n,v_n)\overset{(d)}{=}\vec{d}_n(o_n,v_n).
\end{equation}

Similarly to the case of usual triangulations in \cite{legall}, this implies the desired equality in distribution $D(U,V) \overset{(d)}{=} D(s_*,U)$. Indeed, set $U_n=\lceil (2n-1)U\rceil$ and $V_n=\lceil (2n-1)V\rceil$, which are both uniformly distributed over $\{1,2, \dots, 2n-1\}$, so that
\[
\frac{U_n}{n} \xrightarrow[n\to \infty]{(P)} U, \ \ \ \frac{V_n}{n} \xrightarrow[n\to \infty]{(P)} V.
\]
Then, from \eqref{eq joint convergence along subsequence}, we have
\[
\vec{D}_{(n)}\left(\frac{U_n}{n},\frac{V_n}{n}\right) \xrightarrow[n\to \infty]{(P)} \vec{D}(U,V).
\]
Now, from \eqref{eq rerooting dist}, we have that the distribution of $\vec{D}(U,V)$ is also the limiting distribution of
\[
L_{(n)}\left(\frac{U_n}{n}\right) - \min L_{(n)} +1,
\]
so that $\vec{D}(U,V)$ has the same distribution as $Z_U -\inf Z$, which is also the distribution of $D(s_*,U)$, from \eqref{eq:extracted-lim-of-distance-from-root-equals-brownian-distance}. This concludes the proof.
\end{proof}

\section{Technical preliminaries}
\label{sec tech prelim}
\subsection{Consequences of the convergence of the rescaled labels}
\label{subsec-consequences-of-cv-of-labels}

We now prove a few technical properties of $\vec{d}$ that stem from the convergence given in \Cref{thm-cv-of-labeled-tree}.\\

For any integer $n\geq 1$, let $\rho_n$ be the root vertex of the random triangulation $\mathcal{T}_n$, uniform over the rooted planar Eulerian triangulations with $n$ black faces. We denote by $\overline{\mathcal{T}}_n$, the triangulation $\mathcal{T}_n$ together with a distinguished vertex $o_n$, picked uniformly at random in $\mathcal{T}_n$. We then have the following result:
\begin{prop}
\label{prop geod cv to sup z}
The following convergence holds:
\[
n^{-1/4} \vec{d}(o_n,\rho_n) \xrightarrow[n\to \infty]{(d)} \sup Z.
\]

Consequently, the sequence $(n^{-1/4} \vec{d}(o_n,\rho_n))_{n \geq 1}$ is bounded in probability and bounded away from zero in probability, as well as the sequence $(n^{-1/4} \vec{d}(\rho_n,o_n))_{n \geq 1}$.
\end{prop}

\begin{proof}
Recall that $\overline{\mathcal{T}}_n$ is in correspondence with a random tree $\mathscr{T}_n$, uniform over the well-labeled plane trees with $n$ edges, whose labelling we denote by $l_n$.
We have, from \eqref{eq:distance-from-root-from-labels}, that
\[
\vec{d}(o_n, \rho_n)= - \min_{v \in V(\mathscr{T}_n)}l(v) +1.
\]

Then, using the convergence of \Cref{thm-cv-of-labeled-tree}, we get that the quantity
\[
n^{-1/4}\left( - \min_{v \in V(\mathscr{T}_n)}l(v) +1\right)
\]
converges in distribution to $(- \inf Z)\overset{(d)}{=} \sup Z$.

This directly implies the bounds in probability for the sequence $(n^{-1/4} \vec{d}(o_n,\rho_n))_{n \geq 1}$. For those pertaining to $(n^{-1/4} \vec{d}(\rho_n,o_n))_{n \geq 1}$, recall that for any two vertices $u,v$ of an Eulerian triangulation, $\frac{1}{2}\vec{d}(u,v)\leq \vec{d}(v,u) \leq 2 \vec{d}(u,v)$.
\end{proof}

For a rooted Eulerian triangulation (possibly with a boundary) $\Delta$, let $N(\Delta)$ be the number of black triangles of $\Delta$. Then:

\begin{prop}
\label{prop-positive-mass-to-any-neigb-of-zero-in-distance-profile}
Let $\alpha>0$, and let us denote by $\mathcal{B}_{r}(\overline{\mathcal{T}}_n, o_n)$ the ball (for the oriented distance) of radius $r$ in $\overline{\mathcal{T}}_n$, centered at $o_n$. For any $\varepsilon \in (0,1)$, there exists some $b \in (0,1)$ such that
\[
\liminf_{n \to \infty}\Prob{N(\mathcal{B}_{\alpha n^{1/4}}(\overline{\mathcal{T}}_n, o_n)) > bn} \geq 1 - \varepsilon.
\]
\end{prop}

\begin{proof}
Let us roughly sketch the idea of the proof. Recall that $\overline{\mathcal{T}}_n$ is in correspondence with a random tree $\mathscr{T}_n$, uniform over the well-labeled plane trees with $n$ edges. Moreover, the black triangles of $\overline{\mathcal{T}}_n$ correspond to the edges of $\mathscr{T}_n$, so that, for any type-$m$ black triangle $t$ of $\overline{\mathcal{T}}_n$:
\[
\text{Leb}\{s \in [0,2n) \Big\lvert u_{\lfloor s \rfloor}^{(n)} \text{is the vertex of type $m$ in $t$}\}=2.  
\]

Thus, using \eqref{eq:distance-from-root-from-labels}, we have
\[
\frac{1}{n} \cdot N(\mathcal{B}_{\alpha n^{1/4}}(\overline{\mathcal{T}}_n,o_n))  \geq \int_0^1 \Indic{l_n(\lfloor 2ns \rfloor) - \min l_n \leq \alpha n^{1/4} - 1}\mathrm{d}s.
\]

Note that we have, for any $s \in [0,1)$:
\[
\lvert l_n(\lfloor 2ns \rfloor) - n^{1/4}L_{(n)}(s) \rvert \leq 1,
\]
so that:
\[
\int_0^1 \Indic{l_n(\lfloor 2ns \rfloor) - \min l_n \leq \alpha n^{1/4} - 1}\mathrm{d}s \geq \int_0^1 \Indic{L_{(n)}(s) - \min L_{(n)} \leq \alpha  - 2/n^{1/4}}\mathrm{d}s.
\]

Therefore:
\[
\Prob{N(\mathcal{B}_{\alpha n^{1/4}}(\overline{\mathcal{T}}_n,o_n)) > bn} \geq
\Prob{ \int_0^1 \Indic{L_{(n)}(s) - \min L_{(n)} \leq \alpha  - 2/n^{1/4}}\mathrm{d}s > b}.
\]

Now, the liminf of the probability on the right-hand side of the previous inequality can be bounded below by
\[
\Prob{\int_{0}^{1}\Indic{Z_s - \inf Z \leq\frac{\alpha}{2}} \mathrm{d}s > b},
\]
which tends to 1 as $b$ tends to 0, as $Z$ is continuous.

This concludes the proof.
\end{proof}

\begin{prop}
\label{prop-sprinkling}
For any $\varepsilon >0$ and any $\delta \in (0,1)$ there exists an integer $k \geq 1$ such that, for any sufficiently large $n$, if $o_n^1, \dots, o_n^k$ are chosen uniformly and independently in $V(\mathcal{T}_n)$, we have
\[
\Prob{\sup_{x \in V(\mathcal{T}_n)}\left(\inf_{1 \leq j \leq k}\vec{d}(x,o_n^j)\right) > \varepsilon n^{1/4}} \leq \delta.
\]
\end{prop}

\begin{proof}
  Let us fix an integer $K \geq 1$. Recall that we write $(u_i^{(n)})_{0 \leq i \leq 2n-1}$ for the vertices of $\mathscr{T}_n$ along its contour exploration. Then, for $k$ large enough, for any sufficiently large $n$,
\begin{equation}
\label{eq:lots-of-unif-vertices-get-into-all-intervals}  
\Prob{\forall i \in \{0, \dots, 2K-1\} \, \exists j \in \{1, \dots, k\} \, \exists m \in \{\lfloor \frac{in}{k} \rfloor, \dots, \lfloor \frac{(i+1)n}{k} \rfloor \}, \, o_n^j=u_m^{(n)}} \geq 1 - \frac{\delta}{2}.
\end{equation}

We will now argue on the event in \eqref{eq:lots-of-unif-vertices-get-into-all-intervals}.

Using \eqref{eq:distance-from-root-from-labels}, we have, for any $i,j \in \{0, 1, \dots, 2n\}$,
\[
\vec{d}_n(u_i^{(n)},u_j^{(n)}) \leq 2(l_n(u_i^{(n)}) + l_n(u_j^{(n)}) - 2\check{l}_n(i,j)+2),
\]
so that, for any $n$ sufficiently large:
\[
\sup_{x \in V(\mathcal{T}_n)}\left(\inf_{1 \leq j \leq k}\vec{d}(x,o_n^j)\right)
\leq 4 \max_{0 \leq i \leq 2K-1} \omega\left(l_n,\left[\lfloor \frac{in}{K} \rfloor, \lfloor \frac{(i+1)n}{K} \rfloor\right]\right) +4
\]
where $\omega(f,I)$ is the modulus of continuity of the function $f$ on the interval $I$.

Therefore, we have
\[
\liminf_{n}\Prob{\sup_{x \in V(\mathcal{T}_n)}\left(\inf_{1 \leq j \leq k}\vec{d}(x,o_n^j)\right) < \varepsilon n^{1/4}} \geq \Prob{\omega(Z,\frac{1}{K}) < \frac{\varepsilon}{5}},
\]
by using once again the convergence of \Cref{thm-cv-of-labeled-tree}. (We denote by $\omega(Z,\eta)$ the supremum $\sup_{\lvert I \rvert \leq \eta}\omega(Z,I)$.)

Now, as $Z$ is a.s. continuous on $[0,1]$, it is uniformly continuous, so that, for any $\varepsilon >0$, for $K$ large enough,
\[
\Prob{\omega(Z,\frac{1}{K}) <\frac{\varepsilon}{5}} \geq 1 - \frac{\delta}{2},
\]

which concludes the proof.
\end{proof}

\subsection{Enumeration results}
\bigskip

We will need some asymptotic results on the generating series $B(t,z)$ of Eulerian triangulations with a semi-simple alternating boundary, as defined in \Cref{subsec structure oriented dist}: 
\begin{equation*}
B(t,z)=\sum_{n,p \geq 0}B_{n,p}t^n z^p,
\end{equation*}
where $B_{n,p}$ is the number of Eulerian triangulations with semi-simple alternating boundary of length $2p$ and with $n$ black triangles. Jérémie Bouttier and the author obtain in {\cite{bouttier-carrance}} a rational parametrization of $B(t,z)$, which yields the following asymptotic result:

\begin{thm}
We have
\begin{equation}
\label{eq:gf-better-asympt}
[t^n]B(t,z) =\sum_{p \geq 0} B_{n,p}z^p \underset{n \to \infty}{\sim} \frac{3}{2}\frac{z}{\sqrt{\pi(z-1)(4z-1)^3}}8^n n^{-5/2} \ \ \ \forall \,z \in[0, \frac{1}{4}).
\end{equation}
\end{thm}

This implies that:
\begin{equation}
\label{gf-expression}
\begin{cases}
B_{n,p}\underset{n \to \infty}{\sim} C(p)8^n n^{-5/2} \ \ \forall \, p\\
C(p)\underset{p \to \infty}{\sim} \frac{\sqrt{3}}{2 \pi}4^p \sqrt{p} \, \text{ and } \sum_{p\geq 1}C(p)z^p=\frac{3}{2}\frac{z}{\sqrt{\pi(z-1)(4z-1)^3}} \ \ \ \forall \,z \in[0, \frac{1}{4})
\end{cases}
\end{equation}

Note that \eqref{eq:gf-better-asympt} is much stronger than \eqref{gf-expression}. Indeed, it states that, for any $\varepsilon >0$, for any $n$ large enough, we have, for all $z \in[0,1/4)$,
\begin{equation}
\label{eq gf bound epsilon}
(1-\varepsilon)8^nn^{-5/2} f(z) \leq g_n(z) \leq (1+\varepsilon)8^nn^{-5/2} f(z),
\end{equation}
where
\[
f(z)=\frac{3}{2}\frac{z}{\sqrt{\pi(z-1)(4z-1)^3}}
\]
and 
\[
g_n(z)=\sum_{p \geq 0} B_{n,p}z^p.
\]
Thus, as both $f$ and $g_n$ are analytic functions on $[0,1/4)$, by taking the successive derivatives of the terms in \eqref{eq gf bound epsilon}, we obtain equivalent bounds for the successive coefficients of $f$ and $g_n$ seen as power series:
\[
(1-\varepsilon)8^nn^{-5/2} ([z^p]f(z)) \leq B_{n,p} \leq (1+\varepsilon)8^nn^{-5/2} ([z^p]f(z)),
\]
for any $n$ large enough and for any $p$.

This yields that, for all $n,p \geq 1$,
\begin{equation}
\label{eq:gf-asympt-bounds}
c \, C(p) 8^n n^{-5/2} \leq B_{n,p} \leq c' \, C(p) 8^n n^{-5/2},
\end{equation}
for some constants $0 < c < c'$ independent of $n$ and $p$.\\

Let us now focus on the coefficients
\begin{equation}
\label{z-coeff-definition}
Z(p):=\sum_{n \geq 0} \left(\frac{1}{8} \right)^nB_{n,p}.
\end{equation}
The calculations of {\cite{bouttier-carrance}} also yield the following exact formula:
\begin{equation}
\label{eq:boltzmann-exact}
\sum_{p \geq 0}Z(p)z^p =\sum_{p \geq 0, \, n\geq 0}\left(\frac{1}{8}\right)^n B_{n,p}z^p=\frac{1+7z-8z^2+\sqrt{(z-1)(4z-1)^3}}{2(1-z)}  \ \ \ \forall \, z \in[0, \frac{1}{4}),
\end{equation}
which gives the asymptotic behavior:
\begin{equation}
\label{eq:boltzmann-asympt}
Z(p)\underset{p \to \infty}{\sim}\frac{1}{4}\sqrt{\frac{3}{\pi}}4^p p^{-5/2} \, \text{ and } Z(0)=1.
\end{equation}

In particular, for any $p \geq 1$, the sum $Z(p)=\sum_nB_{n,p}8^{-n}$ is finite, which makes it possible to define the \textbf{Boltzmann distribution} on Eulerian triangulations of the $2p$-gon (with a semi-simple alternating boundary), that assigns a weight $8^{-n}/Z(p)$ to each such triangulation having $n$ black triangles. A random triangulation sampled according to this measure will be called a \textbf{Boltzmann Eulerian triangulation of perimeter} \boldmath$2p$.\\

Note that there is a natural bijection between Eulerian triangulations of the 2-gon (with an alternating boundary), and rooted planar Eulerian triangulations, which simply consists in ``zipping'' or ``unzipping'' the root edge (see \Cref{DigonToRoot}). This simple observation will be useful in the sequel.

\begin{figure}[htp]
\centering
\includegraphics[scale=1.3]{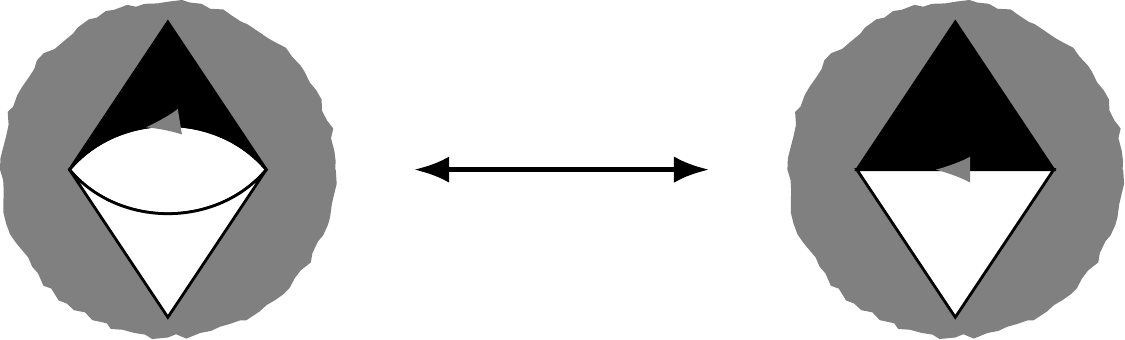}
\caption{The bijection between Eulerian triangulations of the 2-gon with an alternating boundary, and rooted planar Eulerian triangulations.}
\label{DigonToRoot}
\end{figure}

\section{Skeleton decomposition}
\label{sect-skeleton}

We previously considered the balls of planar rooted Eulerian triangulations, and the associated hulls, defined with the oriented distance from the root.\\

To obtain the layer decomposition of finite planar Eulerian triangulations that will be crucial to the rest of this paper, we want to generalize the notions of hulls, to the \emph{layers} of a planar Eulerian triangulation \unboldmath$A$, that lie between the boundaries of two different hulls of $A$ (see \Cref{DiagonalCurvesInSubtrig}). More precisely, we want to have a good definition of oriented distance from the boundary of a rooted planar Eulerian triangulation with an alternating boundary (or, as will be the case for our layers, two disjoint boundaries with one distinguished as the ``bottom'' one): such an oriented distance will once again induce a structure of sets of simple closed curves $\{\mathcal{C}_n\}$, going through type-$n$ modules, and, if we look at different layers making up some triangulation $A$, we want the union of these sets of curves to be $\{\mathcal{C}_n(A)\}$ (see \Cref{DiagonalCurvesInSubtrig}). For that purpose, if $A$ is a planar Eulerian triangulation with a (distinguished) alternating boundary $\partial_0 A$, rooted on this boundary, we define on $V(A)$, the \textbf{oriented distance from the boundary} $\partial_0 A$ as the pullback of the oriented distance on $A'$, where $A'$ is the planar Eulerian triangulation obtained from $A$ by gluing into pairs the edges of $\partial_0A$, as shown in \Cref{BoundaryDistanceConv}. Thus, this distance alernates between 0 and 1 on $\partial_0 A$, and, for an inner vertex $v$ of $A$, is the shortest oriented distance from a 0-labeled outer vertex, to $v$ (see \Cref{BoundaryDistanceConv}).

\begin{figure}[htp]
\centering
\includegraphics[scale=1.2]{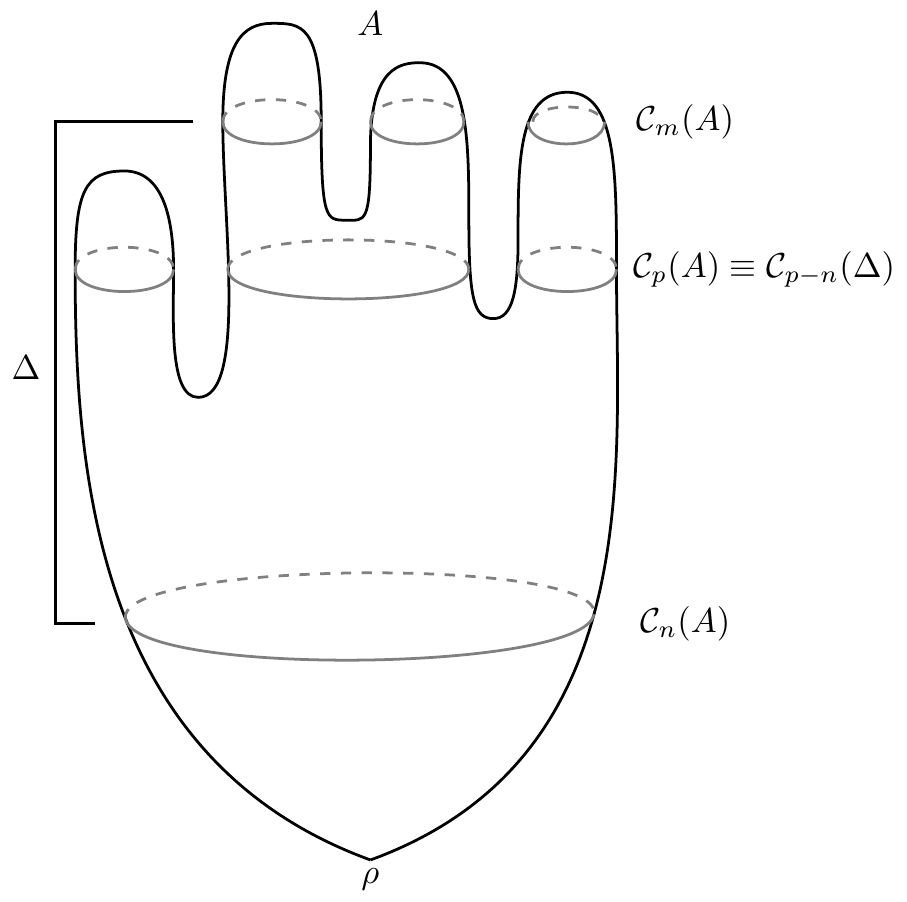}
\caption{The sets of simple closed curves $\{\mathcal{C}_n(A)\}$, cutting up the planar Eulerian triangulation $A$ into layers of increasing distance from the origin $\rho$, also separate the different layers of a subtriangulation $\Delta$ lying between two such curves: this guides us for the good notion of oriented distance from the bottom boundary of $\Delta$.}
\label{DiagonalCurvesInSubtrig}
\end{figure}

\begin{figure}[htp]
\centering
\includegraphics[scale=0.9]{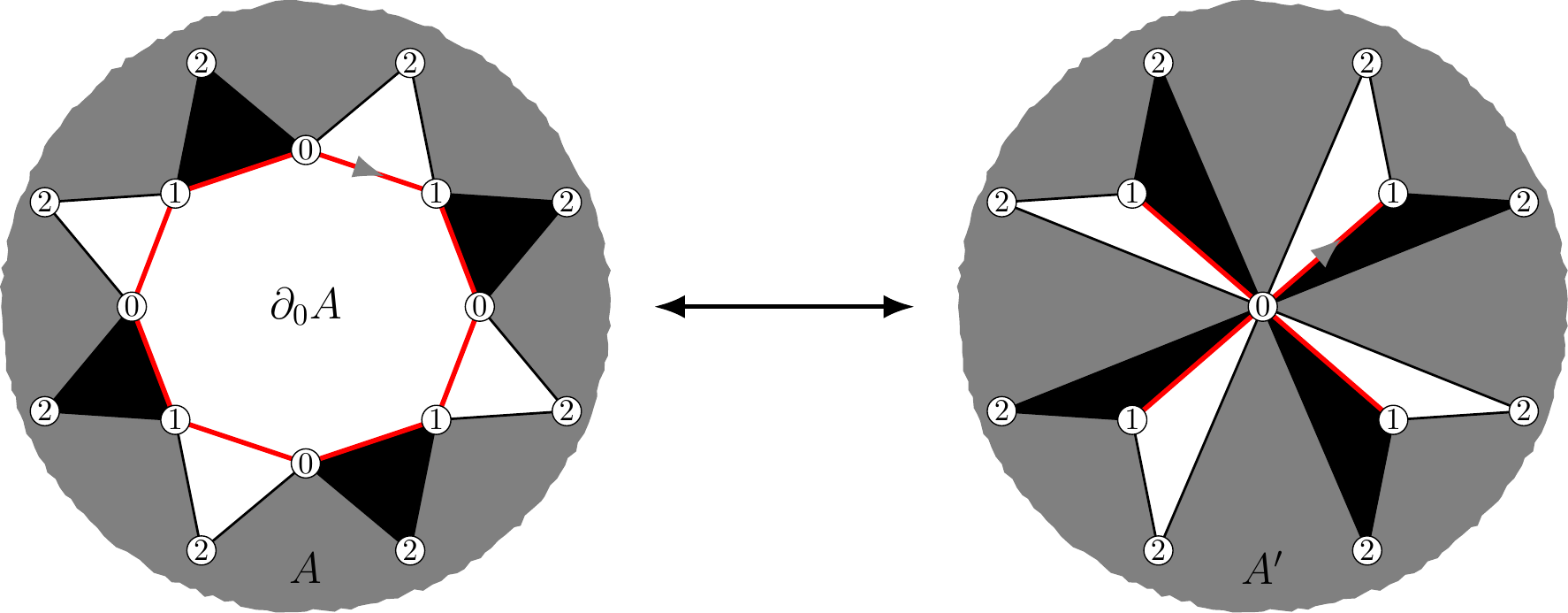}
\caption{The geodesic oriented distances in an Eulerian triangulation $A$ with a (distinguished) alternating boundary $\partial_0A$ (left) are given by the distances from the root in the triangulation $A'$, obtained from $A$ by gluing into pairs the edges of $\partial_0A$ (right). The triangles adjacent to the boundary are depicted in black and white, the rest of triangulation is sketched in gray, and the boundary is in red.}
\label{BoundaryDistanceConv}
\end{figure}

Now that we have a satisfactory notion of distance, as before, we will be interested in the union of faces of $A$ incident to vertices at (oriented) distance less than $n$ from the boundary. We will denote this union $B_{n+1}(A)$. As was the case for usual balls, the faces of $B_{n+1}(A)$ adjacent to its boundary parts, other than the original boundary $\partial A$, will correspond to modules of type $n+1$, for the oriented distance from $\partial_0 A$. Once again, we will have the convention that these boundary parts are simple, and we will glue them to semi-simple boundaries. If $A$ is pointed at a vertex $v$ at oriented distance at least $n+2$ from the boundary, we can also define a notion of hull for $B_{n+1}(A)$, which will be an Eulerian triangulation with two boundaries of specific types. In this section, we will first develop the description of such triangulations, before dealing with random Eulerian triangulations with one boundary, and their hulls.

Note that the chain of arguments and notation of this section follow closely those of {\cite[Section~5]{curien-legall}}: in order to be both concise and precise, we detail in the proofs of this section only the additional subtleties and difficulties arising in our case compared to the equivalent results of {\cite{curien-legall}}.

\subsection{Cylinder triangulations}
\label{subsec cylinder trig}

\begin{defnt}
We call \textbf{Eulerian cylinder triangulation of height \boldmath$r$} \unboldmath $\geq 1$, an Eulerian triangulation with two  boundaries, one (the bottom of the cylinder) being alternating and semi-simple, the other one (the top) being a succession of modules (see \Cref{cylinder-fig}), and such that any module adjacent to the top boundary is of distance type $r$ with respect to the bottom. 

We denote by $\partial \Delta$ its bottom boundary, and by $\partial ^*\Delta$ its top boundary. The root is an edge on $\partial \Delta$ oriented such that the bottom face sits on its right.
\end{defnt}

\begin{figure}[htp]
\centering
\includegraphics[scale=0.8]{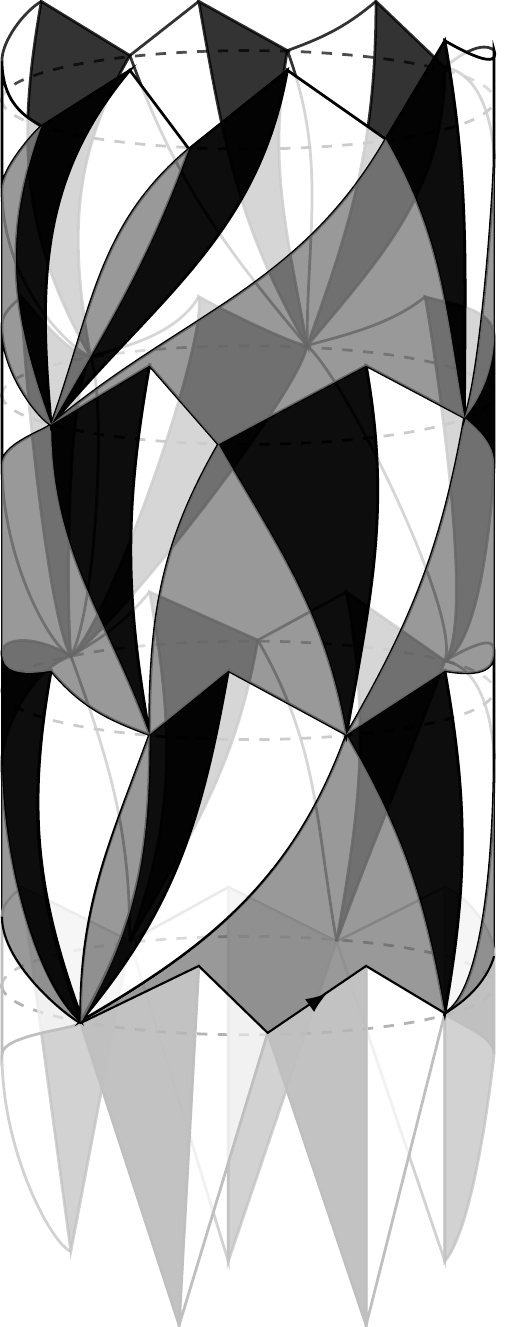}
\hspace{6em}
\includegraphics[scale=0.8]{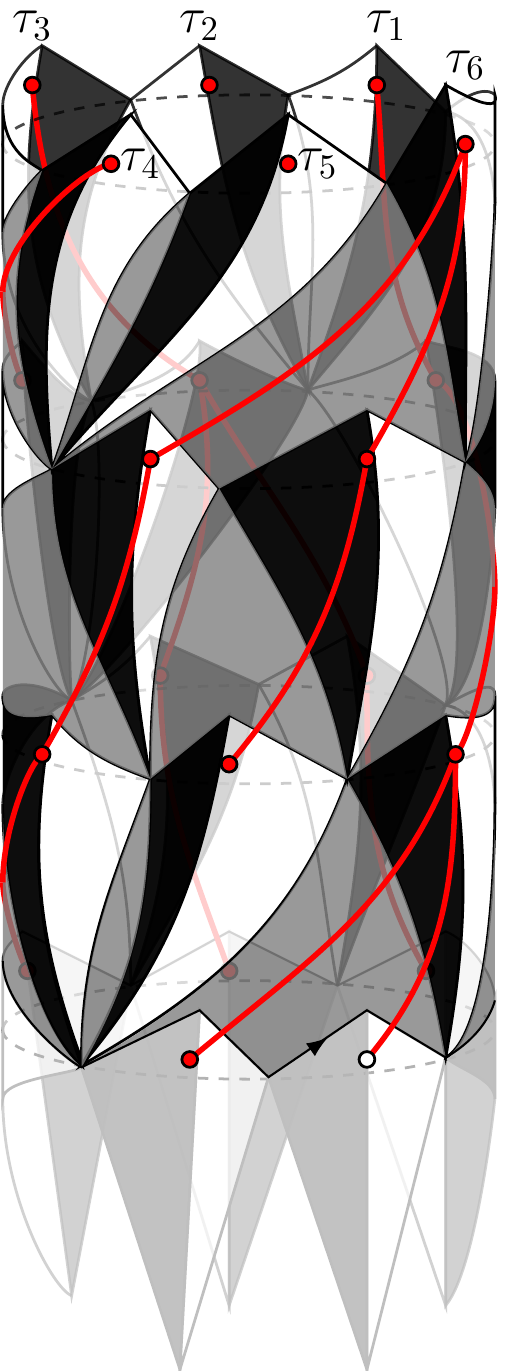}
\caption{Left, a cylinder Eulerian triangulation of height 3, top length 12 and bottom length 10: the foreground parts of the slots are in medium grey while the background ones are left white for legibility, and the ghost modules are in pale grey at the bottom. Right, the construction of the associated forest (with its distinguished vertex at height 3, in white).}
\label{cylinder-fig}
\end{figure}

Let $\Delta$ be an Eulerian cylinder triangulation of height $r$. Let $2p$ be the bottom boundary length, and $2q$ the top boundary length. For $1 \leq j \leq r$, the ball $B_j(\Delta)$ is defined as the union of all edges and faces of $\Delta$ incident to at least a vertex at distance $< j$ from the bottom boundary, and the hull $B^{\bullet}_j(\Delta)$ is obtained from $B_j(\Delta)$ by adding all the connected components of its complement except the one containing the top boundary. Therefore $B^{\bullet}_j(\Delta)$ is a cylinder triangulation of height $j$, and we denote by $\partial_j\Delta$ the set of modules adjacent to its top boundary. Let $\mathcal{M}(\Delta)$ be the set of modules of $\Delta$ belonging to some $\partial_j(\Delta)$, for $0 \leq j \leq r$. (For convenience, we will associate a ``ghost'' module to each pair of successive edges of the bottom boundary respectively adjacent to a white and a black triangle, and the set $\partial_0(\Delta)$ of these $p$ ghost modules will be included in $\mathcal{M}(\Delta)$.)

We define a genealogical order on $\mathcal{M}(\Delta)$: a module $m$ of $\partial_j(\Delta)$ is the parent of a module $m'$ of $\partial_{j-1}(\Delta)$ if $m$ is the first module of $\partial_j(\Delta)$ that we encounter when going left-to-right along the modules of $\partial_{j-1}(\Delta)$, starting by the top vertex of $m'$.
This order yields a forest $\mathcal{F}$ of $q$ plane trees, whose vertices correspond to the modules belonging to $\mathcal{M}(\Delta)$. The maximal height of this forest is $r$, and a vertex of height $r-j$ corresponds to a module of $\partial_{j}(\Delta)$. We denote by $\tau_1, \tau_2, \dots, \tau_q$ the trees of the forest listed clockwise around $\partial_r(\Delta)$, with $\tau_1$ the tree containing the vertex corresponding to the root.  \! \!Therefore, the tree $\tau_1$ has height $r$, with a distinguished vertex (the one corresponding to the root) at height $r$.

Apart from the modules of $\mathcal{M}(\Delta)$, $\Delta$ is composed of triangulations with a semi-simple alternating boundary that fill in the ``slots'' bounded by the modules of $\mathcal{M}(\Delta)$. To a module $m$ in $\partial_j(\Delta)$, we associate the slot bounded by $m$, its  children if any, and the module to the left of $m$ in $\partial_j(\Delta)$. This slot $M_m$ is thus filled in by a triangulation with a semi-simple alternating boundary, of perimeter $2(c_m+1)$, where $c_m$ is the number of children of $m$. We denote by $N(M_m)$ the number of black triangles of this triangulation with a boundary.\\

We will say a forest $\mathcal{F}$ with a distinguished vertex is a \boldmath$(p,q,r)$\textbf{-admissible forest} if it consists of an ordered sequence \unboldmath$(\tau_1, \tau_2, \dots, \tau_q)$ of $q$ rooted plane trees of maximal height $r$, with $p$ vertices at height $r$, with the distinguished vertex at height $r$ in $\tau_1$.

If $\mathcal{F}$  is a $(p,q,r)$-admissible forest, we write $\mathcal{F}^*$ for the set of all vertices of $\mathcal{F}$ at height strictly smaller than $r$.

From the preceding decomposition, we obtain the following result:

\begin{prop}
\label{prop:bij-skeleton-dec}
The Eulerian triangulations of the cylinder $\Delta$ of height $r$ with a bottom boundary length $2p$ and a top boundary length $2q$, are in bijection with pairs consisting of a $(p,q,r)$-admissible forest $\mathcal{F}$ and a collection $(M_v)_{v \in \mathcal{F}^*}$ such that, for every $v \in \mathcal{F}^*$, $M_v$ is an Eulerian triangulation of the $2(c_v+1)$-gon with a semi-simple alternating boundary, with $c_v$ being the number of children of $v$ in $\mathcal{F}$. 
\end{prop}

Note that the bijection of \Cref{prop:bij-skeleton-dec} is an adaptation of similar constructions that have been made for usual triangulations and quadrangulations, starting with Krikun's works {\cite{Krikun2005,uipq}}, with more recent versions by Curien and Le Gall for usual triangulations {\cite{curien-legall}}, and by Le Gall and Lehéricy for quadrangulations {\cite{legall-lehericy}}. Following the vocabulary used in these works, we call this bijection the \textbf{skeleton decomposition}, and say that $\mathcal{F}$ is the \textbf{skeleton} of the triangulation $\Delta$. We will also call \textbf{skeleton modules} the modules of $\mathcal{M}(\Delta)$.

\subsection{Skeleton decomposition of random triangulations}

We will now use the bijection derived in \Cref{subsec cylinder trig} to obtain the asymptotic behavior of the laws of the hulls of random uniform Eulerian triangulations with a boundary.\\

We first need a bit of additional notation.

Consider an Eulerian triangulation with a boundary $\Delta$, pointed at $v$. We can define the hull $B_r^{\bullet}(\Delta)$ of $\Delta$ like for cylinder triangulations, if $\vec{d}(\partial \Delta,v) > r +1$. If $\vec{d}(\partial \Delta,v) \leq r +1$, we can set $B_r^{\bullet}(\Delta)=\Delta$.

Let  $\mathcal{T}^{(p)}_n$ be a uniform random triangulation over the set of Eulerian triangulations with a semi-simple alternating boundary of length $2p$ and with $n$ black triangles. We denote by $\overline{\mathcal{T}}^{(p)}_n$ the pointed triangulation obtained by choosing a uniform random inner vertex of $\mathcal{T}^{(p)}_n$. Let $\Delta$ be a cylinder triangulation of height $r$, of respective bottom and top boundary lengths $2p$ and $2q$, with $N$ black triangles, with $n \geq N$. Using the skeleton decomposition, we associate to $\Delta$ a $(p,q,r)$-admissible forest $\mathcal{F}$, together with triangulations $(M_v)_{v \in \mathcal{F}}$ filling in the ``slots'' between the modules of $\mathcal{M}(\Delta)$. We write $N(M_v)$ for the number of black triangles of $M_v$, for every $v \in \mathcal{F}^*$.

\begin{lemma}
\label{skeleton-proba-limit-lem}
We have 
\begin{equation}
\label{skeleton-proba-limit}
\lim_{n \to \infty}\mathbb{P}\left(B_r^{\bullet}(\overline{\mathcal{T}}^{(p)}_n)=\Delta \right)=\frac{4^{-q}C(q)}{4^{-p}C(p)}\prod_{v \in \mathcal{F}^*}\theta(c_v)\frac{8^{-N(M_v)}}{Z(c_v+1)},
\end{equation}
where
\begin{equation}
\label{eq:theta-related-to-z}
\theta(k)=\frac{1}{8}4^{-k+1}Z(k+1),
\end{equation}
with $Z(k)$ defined as in \eqref{z-coeff-definition}.
\end{lemma}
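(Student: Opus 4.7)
The plan is to prove \Cref{skeleton-proba-limit-lem} by direct enumeration combined with the sharp asymptotic $B_{n,p}\sim C(p)\,8^n n^{-5/2}$ from \eqref{eq:gf-better-asympt}, then match the resulting limit to the right-hand side using the combinatorial identities of the skeleton decomposition.

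First, I would set up a bijection at the discrete level: any pair $(T,v)$ with $T\in\mathfrak{T}^{(p)}_n$ and $B_r^{\bullet}(T,v)=\Delta$ decomposes uniquely as $T=\Delta\cup T'$, where $T'$ is glued along $\partial^*\Delta$ and is an Eulerian triangulation with semi-simple alternating boundary of length $2q$ and $n-N$ black triangles. The condition $B_r^{\bullet}(T,v)=\Delta$ forces $v$ to be an inner vertex of $T'$: if $\vec{d}(\partial T,v)\leq r+1$ the hull equals $T$ itself by the convention, while if $v$ lies in the interior of $\Delta$ then $v\in B_r(T)$ and the hull collects all complement components and is again the full triangulation.

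Next, I would handle the denominators coming from the uniform pointing. Applying Euler's formula to any bicolored triangulation with boundary $2p$ and $n$ black triangles yields $|V(T)|=n+p+1$, so $V_{\mathrm{in}}(T)=n+p+1-V_{\mathrm{bd}}(\partial\Delta)$ depends on $T$ only through its bottom boundary, which is fixed by $\Delta$. Likewise $V_{\mathrm{inner}}(T')=(n-N)+q+1-V_{\mathrm{bd}}(\partial^*\Delta)$ is determined by $\Delta$ alone. Both are linear in $n$ with leading coefficient one, so
\[
\Prob{B_r^{\bullet}(\overline{\mathcal{T}}^{(p)}_n)=\Delta}=\frac{B_{n-N,q}\cdot V_{\mathrm{inner}}(T')}{B_{n,p}\cdot V_{\mathrm{in}}(T)}\xrightarrow[n\to\infty]{}\frac{C(q)}{C(p)}\,8^{-N},
\]
where the limit follows from \eqref{eq:gf-better-asympt} together with the fact that the vertex-count ratio tends to $1$.

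The final step is to verify that the right-hand side of \eqref{skeleton-proba-limit} also equals $\frac{C(q)}{C(p)}\,8^{-N}$. Using \eqref{eq:theta-related-to-z} one computes $\theta(k)/Z(k+1)=\tfrac{1}{2}\cdot 4^{-k}$, and the skeleton decomposition provides the identities $N=|\mathcal{F}^*|+\sum_{v\in\mathcal{F}^*}N(M_v)$ (each skeleton module contributes a single black triangle), $|\mathcal{F}^*|=|\mathcal{F}|-p$ (the $p$ ghost modules at height $r$ are excluded from $\mathcal{F}^*$), and $\sum_{v\in\mathcal{F}^*}c_v=|\mathcal{F}|-q$ (a forest of $q$ trees has $|\mathcal{F}|-q$ edges). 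Plugging these in, the product on the right-hand side telescopes to exactly $\frac{C(q)}{C(p)}\,8^{-N}$. The main obstacle will be the precise combinatorial bookkeeping in the bijection $T\leftrightarrow(T',v)$: one must verify that the count of valid fillings $T'$ is indeed $B_{n-N,q}$ up to a rooting convention inherited uniquely from $\Delta$, despite the potentially non-trivial identification structure of $\partial^*\Delta$ as a semi-simple alternating boundary.
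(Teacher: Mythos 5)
Your proposal takes essentially the same route as the paper: express the probability as $\frac{B_{n-N,q}}{B_{n,p}}$ times a vertex-count ratio tending to $1$, apply the sharp asymptotics \eqref{eq:gf-better-asympt} to get the limit $\frac{C(q)}{C(p)}8^{-N}$, and then match this with the right-hand side via the skeleton identities $N = |\mathcal{F}^*| + \sum_{v}N(M_v)$ and $\sum_{v}c_v = |\mathcal{F}| - q$. The paper performs the final algebraic step by multiplying the expression by $(4\cdot 8)^{\,p-q-\sum(c_v-1)} = 1$, whereas you telescope the right-hand side directly using $\theta(k)/Z(k+1) = \tfrac12\cdot 4^{-k}$; these are trivially equivalent.

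One small imprecision worth flagging: you assert that $V_{\mathrm{inner}}(T') = (n-N)+q+1 - V_{\mathrm{bd}}(\partial^*\Delta)$ is ``determined by $\Delta$ alone.'' This is not quite right. The complement $T'$ has a \emph{semi-simple} boundary, and the number $V_{\mathrm{bd}}(T')$ of distinct boundary vertices (which can be strictly less than $2q$ if some type-$(r+1)$ vertex is a separating vertex) depends on $T'$ itself, not merely on $\Delta$. So $V_{\mathrm{inner}}(T')$ genuinely varies with $T'$, and the precise statement is an average over $T'$, not a product. This does not affect the limit, since $V_{\mathrm{bd}}(T') \in \{q+1, \dots, 2q\}$ is bounded independently of $n$, so the ratio still tends to $1$ uniformly; the paper's own phrasing is similarly informal on this point. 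Nonetheless, a careful write-up should note that one is summing over $T'$ and appealing to $V_{\mathrm{inner}}(T') = n - N + O(1)$ uniformly.
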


\begin{proof}
  First note that this result is the equivalent of {\cite[Lemma~2]{curien-legall}}. It is obtained very similarly, though in our case we start from a slightly less explicit expression, as shown in \eqref{eq:probas-for-pointed-cylinder}: this stems from the fact that our triangulations do not necessarily have simple boundaries.\\
  To simplify notation, let us note in this proof $\rho=8$ and $\alpha=4$. The property $B_r^{\bullet}(\overline{\mathcal{T}}^{(p)}_n)=\Delta$ holds if and only if $\mathcal{T}^{(p)}_n$ is obtained from $\Delta$ by gluing to the top boundary\footnote{Note that, as $\Delta$ is rooted, we can fix an arbitrary rule to determine where to glue the root of the other triangulation.} an arbitrary triangulation with a semi-simple alternating boundary of length $2q$, and with $n-N$ black triangles, and if the distinguished vertex is chosen among the inner vertices of the glued triangulation. Thus:
\begin{equation}
\label{eq:probas-for-pointed-cylinder}
\mathbb{P}\left(B_r^{\bullet}(\overline{\mathcal{T}}^{(p)}_n)=\Delta \right) = \frac{B_{n-N,q}}{B_{n,p}}\cdot \frac{\#\text{inner vertices in glued triangulation}}{\#\text{inner vertices in total triangulation}}.
\end{equation}
Therefore:
\begin{equation}
\label{eq:limit-probas-for-tall-pointed-cylinder}
 \lim_{n \to \infty}\mathbb{P}\left(B_r^{\bullet}(\overline{\mathcal{T}}^{(p)}_n)=\Delta \right)=\frac{C(q)}{C(p)}\rho^{-N}.
\end{equation}
As we have
\[
N=\# \mathcal{M}(\Delta) -p +\sum_{v \in \mathcal{F}^*}N(M_v)=\sum_{1\leq i \leq q}\# \tau_i - p +\sum_{v \in \mathcal{F^*}}N(M_v)=q+\sum_{v \in \mathcal{F}^*}(c_v + N(M_v)) -p,
\]
we get
\begin{equation*}
 \lim_{n \to \infty}\mathbb{P}\left(B_r^{\bullet}(\overline{\mathcal{T}}^{(p)}_n)=\Delta \right)=\frac{\rho^{-q}C(q)}{\rho^{-p}C(p)}\prod_{v \in \mathcal{F}^*}\rho^{-c_v}\rho^{-N(M_v)}.
\end{equation*}
Now, since $\sum_{v \in \mathcal{F}^*}(c_v -1)=p-q$, we can multiply the right-hand side by $(\alpha\rho)^{p-q-\sum_{v \in \mathcal{F}^*}(c_v -1)}$, which yields
\begin{equation*}
\lim_{n \to \infty}\mathbb{P}\left(B_r^{\bullet}(\overline{\mathcal{T}}^{(p)}_n)=\Delta \right)=\frac{\alpha^{-q}C(q)}{\alpha^{-p}C(p)}\prod_{v \in \mathcal{F}^*}\rho^{-1}\alpha^{-c_v+1}\rho^{-N(M_v)},
\end{equation*}

that is
\begin{equation*}
\lim_{n \to \infty}\mathbb{P}\left(B_r^{\bullet}(\overline{\mathcal{T}}^{(p)}_n)=\Delta\right)=\frac{\alpha^{-q}C(q)}{\alpha^{-p}C(p)}\prod_{v \in \mathcal{F}^*}\theta(c_v)\frac{\rho^{-N(M_v)}}{Z(c_v+1)},
\end{equation*}
for $\theta(k)=\rho^{-1}\alpha^{-k+1}Z(k+1)$. 
\end{proof}

Let us give a few properties of $\theta$ that will be useful in the sequel. These properties are obtained from the analytic combinatorial work in {\cite{bouttier-carrance}}, rather than explicit enumeration as was the case for usual triangulations in {\cite{curien-legall}.

First, the asymptotics of $Z$ give:
\begin{equation}
\label{theta-asympt}
\theta(k)\underset{k \to \infty}{\sim}\frac{1}{2}\sqrt{\frac{3}{\pi}}k^{-5/2}.
\end{equation}

Moreover, $\theta$ has the following generating function $g_{\theta}$:
\begin{equation}
\label{eq theta generating function}
g_{\theta}(x)=\sum_{k=0}^{\infty}\theta(k)x^k=1 - \frac{3}{\left(\sqrt{\frac{4-x}{1-x}} +1 \right)^2 -1} \ \ \forall x \in [0,1).
\end{equation}
Indeed, the generating function of $\theta$ may be written, for $0 \leq x <1$:
\begin{align*}
\sum_{k \geq 0}\theta(k)x^k&=\sum_{k \geq 0}\rho^{-1}\alpha^{-k+1}Z(k+1)x^k=\frac{\alpha}{\rho}\sum_{k \geq 0}\left(\frac{x}{\alpha}\right)^kZ(k+1)\\
&=\frac{\alpha^2}{x\rho}\sum_{k \geq 1}\left(\frac{x}{\alpha}\right)^kZ(k)=\frac{\alpha^2}{x\rho}\left(\sum_{k \geq 0}\left(\frac{x}{\alpha}\right)^kZ(k) - Z(0) \right)\\
&=\frac{2}{x}\left(\frac{1}{2}\left(\frac{1+\frac{7}{4} x-\frac{x^2}{2}+\sqrt{(\frac{x}{4}-1)(x-1)^3}}{1-\frac{x}{4}}\right) -1\right)\\
&=\frac{-4+9 x-2x^2+2\sqrt{(x-4)(x-1)^3}}{x(4-x)}\\
&=1 - \frac{3}{\left(\sqrt{\frac{4-x}{1-x}} +1 \right)^2 -1}=g_{\theta}(x).
\end{align*}

It is straightforward to obtain from this that $\theta$ is a probability distribution with mean $1$, so that, considered as the offspring distribution of a branching process, it is critical.

Let $Y=(Y_r)_{r\geq 0}$ be a Galton-Watson process with offspring distribution $\theta$, and let us write $\mathcal{P}_k(\cdot)$ for the law of $Y$ given $Y_0=k$, and $\mathcal{E}_k[\, \cdot \,]$ for the corresponding expectation. Then, for every $r \geq 1$, the generating function of $Y_r$ under $\mathcal{P}_1$ is the $r$-th iterate $g_{\theta}^{(r)}$ of $g_{\theta}$. It is easy to show that this iterate has a very nice expression for any positive integer $r$:
\begin{equation}
\label{iterate-expr}
\mathcal{E}_1 \left[ x^{Y_r} \right] =g_{\theta}^{(r)}(x)=1 - \frac{3}{\left(\sqrt{\frac{4-x}{1-x}} +r \right)^2 -1} \ \ \ \forall x \in [0,1).
\end{equation}

Note that a similarly convenient expression for the $r$-th iterate of the generating function also exists for the offspring distributions associated to the skeleton decompositions of usual triangulations {\cite{Krikun2005,curien-legall}} and of quadrangulations {\cite{uipq,legall-lehericy}}.

Using the transfer theorem (see Theorem VI.3 in {\cite{flajolet-sedgewick}}), we deduce from \eqref{iterate-expr} that
\begin{equation}
\label{gw-tail}
\mathcal{P}_1\left(Y_r=k \right)\underset{k \to \infty}{\sim}\sqrt{\frac{3}{\pi}}\frac{r}{2}k^{-5/2}.
\end{equation}

Let us denote by $\mathbb{F}_{p,q,r}$ the set of $(p,q,r)$-admissible forests. We also define the set $\mathbb{F}'_{p,q,r}$ of pointed forests satisfying the same conditions as $(p,q,r)$-admissible forests, except that the tree with a distinguished vertex is not necessarily $\tau_1$, and the set $\mathbb{F}''_{p,q,r}$ of forests which satisfy the same conditions but do not have a distinguished vertex.

We now prove that the ``skeleton part'' of \eqref{skeleton-proba-limit} defines a probability measure on $\mathbb{F}_{p,r}=\cup_{q\geq 1}\mathbb{F}_{p,q,r}$, similarly to {\cite[Lemma~3]{curien-legall}}:
\begin{lemma}
\label{skeleton-is-proba-lem}
For every $p \geq 1$ and $r \geq 1$,
\begin{equation}
\label{skeleton-is-proba-eq}
\sum_{q=1}^{\infty}\sum_{\mathcal{F} \in \mathbb{F}_{p,q,r}}\frac{4^{-q}C(q)}{4^{-p}C(p)}\prod_{v \in \mathcal{F}^{*}}\theta(c_v)=1.
\end{equation}
\end{lemma}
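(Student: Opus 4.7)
The plan is to reinterpret the sum over forests in \eqref{skeleton-is-proba-eq} as a probability under a Galton--Watson process with offspring distribution $\theta$, then reduce the identity to a single generating-function identity, and finally verify that identity by direct algebra.

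First, since $g_\theta(1)=1$ (evaluate \eqref{eq theta generating function} at $x=1$), $\theta$ is a probability measure on $\mathbb{Z}_{\ge 0}$, so for a rooted plane tree $\tau$ of height at most $r$ the weight $\prod_{v\in\tau^*}\theta(c_v)$ is exactly the law of a $\theta$-Galton--Watson tree truncated at generation $r$. Summing over such $\tau$ with exactly $k$ vertices at height $r$ gives $\mathcal{P}_1(Y_r=k)$. For an ordered $q$-tuple of such trees with $k_i$ vertices at height $r$ in $\tau_i$, together with the choice of a distinguished vertex among the $k_1$ vertices at height $r$ in $\tau_1$, independence of the trees yields
\[
\sum_{\mathcal{F}\in\mathbb{F}_{p,q,r}}\prod_{v\in\mathcal{F}^*}\theta(c_v)\;=\;\sum_{k_1+\cdots+k_q=p}k_1\prod_{i=1}^{q}\mathcal{P}_1(Y_r=k_i)\;=\;\frac{p}{q}\,\mathcal{P}_q(Y_r=p),
\]
the last equality obtained by symmetrizing $k_1$ into $\frac{1}{q}(k_1+\cdots+k_q)=p/q$. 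The claim \eqref{skeleton-is-proba-eq} therefore reduces to
\[
\sum_{q=1}^{\infty}4^{-q}C(q)\,\frac{p}{q}\,\mathcal{P}_q(Y_r=p)\;=\;4^{-p}C(p).
\]

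Set $F(z)=\sum_{p\ge 1}C(p)z^p$ and $h=g_\theta^{(r)}$, so that $\sum_{p}\mathcal{P}_q(Y_r=p)x^p=h(x)^q$. Taking the generating function in $p$ of both sides of the previous display, the factor $p$ is produced by $x\tfrac{d}{dx}$, and after rearrangement the problem reduces to the single identity
\[
F\!\left(\frac{x}{4}\right)\;=\;\frac{x\,h'(x)}{h(x)}\,F\!\left(\frac{h(x)}{4}\right).
\]
To check this, I use the parametrization $u=\sqrt{(4-x)/(1-x)}$. By \eqref{iterate-expr}, the iteration $g_\theta^{(r)}$ acts as the translation $u\mapsto u+r$, equivalently $\sqrt{(4-h)/(1-h)}=u+r$. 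Substituting this together with the explicit form \eqref{gf-expression} of $F$, both sides collapse to the same rational function of $u$ (one readily computes it to be $(u^2-1)(u^2-4)/(12\sqrt{\pi}\,u)$).

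The main obstacle is the algebraic verification in the last step, but it is rendered essentially routine by the fact that $g_\theta^{(r)}$ becomes a translation after this change of variables. The argument parallels the proof of \cite[Lemma~3]{curien-legall}, with the explicit expressions for $\theta$, $g_\theta^{(r)}$ and $F$ that are specific to the Eulerian setting replacing their triangulation counterparts.
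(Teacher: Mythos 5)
Your proof is correct and follows essentially the same route as the paper: both first reinterpret the forest sum via Galton--Watson trees to reduce the claim to $\sum_{q}\frac{p}{q}4^{-q}C(q)\,\mathcal{P}_q(Y_r=p)=4^{-p}C(p)$, and both then verify a generating-function identity by exploiting the fact that $g_\theta^{(r)}$ is the translation $u\mapsto u+r$ in the variable $u=\sqrt{(4-x)/(1-x)}$. The only (cosmetic) difference is that the paper absorbs the factor $p/q$ into the sequence $h(k)\propto 4^{-k}C(k)/k$ and works with $\Pi(x)=\sum_k h(k)x^k=\sqrt{(4-x)/(1-x)}-2$, which turns the verification into the translation identity $\Pi(g^{(r)}(x))-\Pi(g^{(r)}(0))=\Pi(x)$; you instead keep $F(z)=\sum C(k)z^k$ and pay for it with an extra $x\,\tfrac{d}{dx}$ term, leading to the slightly heavier but equally routine check of $F(x/4)=\tfrac{x\,(g^{(r)})'(x)}{g^{(r)}(x)}F(g^{(r)}(x)/4)$.
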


\begin{proof}
Like in the proof of {\cite[Lemma~3]{curien-legall}}, \Cref{skeleton-is-proba-eq} amounts to
\begin{equation}
\label{h-stationary-first-expr}
\sum_{q=1}^{\infty}\frac{h(q)}{h(p)}\mathcal{P}_q\left(Y_r=p\right)=1,
\end{equation}
with
\begin{equation}
h(k)=2\sqrt{\pi}\frac{4^{-k}C(k)}{k}.
\end{equation}

Now, \Cref{h-stationary-first-expr} is equivalent to 
\begin{equation}
\label{h-stationary}
\sum_{q=1}^{\infty}h(q)\mathcal{P}_q\left(Y_r=p\right)={h(p)}
\end{equation}
or, in other words, to the fact that $h$ is an infinite stationary measure for $Y$.

Let $\Pi$ be the generating function of the sequence $(h(k))_{k\geq 1}$:
\[
\Pi(x):=\sum_{k=1}^{\infty}h(k)x^k=\sum_{k=1}^{\infty}\frac{1}{k}C(k)\left(\frac{x}{4}\right)^k.
\]
Contrary to the case of usual triangulations, we do not have an explicit expression for $h$, but, by integrating \eqref{gf-expression}, we obtain one for $\Pi$:
\[
\Pi(x)=\sqrt{\frac{4-x}{1-x}}-2 \ \ \ \forall\, 0<x<1
\]
To prove that  $h$ is an infinite stationary measure for $Y$, it is enough to check that $\Pi\left(g_{\theta}(x)\right) - \Pi\left(g_{\theta}(0)\right) = \Pi(x)$ for every $x \in [0,1)$, which follows from the explicit formulas for $g_{\theta}$ and $\Pi$.
\end{proof}

With \Cref{skeleton-is-proba-lem}, we can define a probability measure ${\bf P}_{p,r}$ on $\mathbb{F}_{p,r}$ by setting, for any $\mathcal{F} \in \mathbb{F}_{p,q,r}$,
\begin{equation}
\label{forest-proba}
{\bf P}_{p,r}(\mathcal{F}):=\frac{4^{-q}C(q)}{4^{-p}C(p)}\prod_{v \in \mathcal{F}^{*}}\theta(c_v).
\end{equation}

Let us note $\mathbb{C}_{p,r}$ the set of Eulerian triangulations of the cylinder of height $r$ and bottom boundary length $2p$. We can define a probability measure $\mathbb{P}_{p,r}$ on $\mathbb{C}_{p,r}$, by first setting the skeleton to be distributed according to $\mathbf{P}_{p,r}$, then, conditionally on the skeleton, filling the slots by independent Boltzmann triangulations (whose boundary lengths are prescribed by the skeleton). Thus, \Cref{skeleton-proba-limit-lem} amounts to stating that, if $\Delta \in \mathbb{C}_{p,r}$,
\begin{equation}
\lim_{n \to \infty}\mathbb{P}\left(B_r^{\bullet}(\overline{\mathcal{T}}^{(p)}_n)=\Delta \right)=\mathbb{P}_{p,r}(\Delta).
\end{equation}

In other words, the law of $B_r^{\bullet}(\overline{\mathcal{T}}^{(p)}_n)$ converges weakly to $\mathbb{P}_{p,r}$ as $n\to \infty$.

Note that the expression \eqref{forest-proba} implies that, if a random cylinder triangulation $A$ is distributed as $\mathbb{P}_{p,r}$, then, for all $1 \leq s \leq r$, its hull $B_s^{\bullet}(A)$ will be distributed as $\mathbb{P}_{p,s}$, or, in other words, the laws $(\mathbb{P}_{p,r})_{r\geq 1}$ are consistent. This implies that the sequence of random maps $(\mathcal{T}^{(p)}_n)_n$ has a local distributional limit. To express this result more precisely, we need to generalize the notion of hulls to some infinite maps. First, for any infinite planar Eulerian triangulation $A$  with a boundary, we can define its ball $B_r(A)$ like in the finite case. Then, if $A$ has a unique end, only one connected component of $A \setminus B_r(A)$ is infinite, so that we can fill all the finite holes, to get the hull $B_r^{\bullet}(A)$.

We then have the following result:
\begin{prop}
For any integer $p \geq 1$, the sequence of random maps $(\mathcal{T}^{(p)}_n)_n$ converges in distribution, in the sense of local limits of rooted maps, to an infinite map that we call the \textbf{uniform infinite Eulerian triangulation of the \boldmath$2p$-gon}, and that we denote by \unboldmath$\mathcal{T}^{(p)}_{\infty}$. It is a random infinite Eulerian triangulation of the plane, with an alternating, semi-simple boundary of length $2p$, that has a unique end almost surely, and such that $B_r^{\bullet}(\mathcal{T}^{(p)}_{\infty})$ has law $\mathbb{P}_{p,r}$, for every integer $r \geq 1$.
\end{prop}

For $p=1$, we can perform the transformation described in \Cref{DigonToRoot}, which yields a random infinite planar Eulerian triangulation, which we denote by $\mathcal{T}_{\infty}$. This random infinite map is the local limit of uniform rooted planar Eulerian triangulations with $n$ black faces when $n \to \infty$, therefore we call it the \textbf{Uniform Infinite Planar Eulerian Triangulation} (UIPET).

The UIPET is the equivalent of well-known models of random infinite planar maps such as the UIPT or the UIPQ (see {\cite{angel,uipq}}), in the case of Eulerian triangulations. Note that this present work gives the first construction of the UIPET.\\

Let  $L^{(p)}_r$ be the length of the top cycle of $B^{\bullet}_r(\mathcal{T}^{(p)}_{\infty})$. When $p=1$, we write $L_r$ for $L^{(1)}_r$ for simplicity.

Let us first note that $\mathcal{T}^{(p)}_{\infty}$ exhibits a spatial Markov property. Let $r, s$ be integers with $1\leq r < s$, and $\Delta \in \mathbb{C}_{p,s}$. Let $2q$ be the length of the boundary $\partial _r\Delta$. We can obtain $\Delta$ by gluing a triangulation $\Delta'' \in \mathbb{C}_{q,s-r}$ on top of a triangulation $\Delta^{\prime} \in \mathbb{C}_{p,r}$, whose top boundary has length $q$. From the explicit formula of \eqref{forest-proba}, we get
\begin{equation}
\mathbb{P}_{p,s}(\Delta)=\mathbb{P}_{p,r}(\Delta') \cdot \mathbb{P}_{q,s-r}(\Delta'').
\end{equation}
Therefore, conditionally on $\{L^{(p)}_r = q\}$, $B^{\bullet}_s(\mathcal{T}^{(p)}_{\infty}) \backslash B^{\bullet}_r(\mathcal{T}^{(p)}_{\infty})$ follows $\mathbb{P}_{q,s-r}$, and is independent of $B^{\bullet}_r(\mathcal{T}^{(p)}_{\infty})$. By letting $s \to \infty$, we obtain that, conditionally on $\{L^{(p)}_r = q\}$, the triangulation $\mathcal{T}^{(p)}_{\infty} \backslash B^{\bullet}_r(\mathcal{T}^{(p)}_{\infty})$ is distributed as $\mathcal{T}^{(q)}_{\infty}$ and is independent of $\mathcal{T}^{(p)}_{\infty}$.

We now give a technical but useful result on the law of $L_r$, which is the equivalent in our case of {\cite[Lemma~4]{curien-legall}}.
\begin{lemma}
\label{top-length-bounds}
There exists a constant $C_0 > 0$ such that for any $\alpha \geq 0$, and for any integers $r,p \geq 1$,
\begin{equation}
\label[ineq]{top-length-bound1}
  \mathbb{P}\left(L_r=p\right) \leq \frac{C_0}{r^2}
\end{equation}
and 
\begin{equation}
\label[ineq]{top-length-bound2}
\mathbb{P}\left(L_r \geq \alpha r^2 \right) \leq C_0 e^{-\alpha/4}.
\end{equation}
\end{lemma}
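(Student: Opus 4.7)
The plan is to obtain a closed-form expression for $\mathbb{P}(L_r = p)$ using the skeleton decomposition, and then bound it using the explicit formula \eqref{iterate-expr} for the iterated generating function.

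First I would derive the key identity. Since $L_r$ is the top perimeter under $\mathbb{P}_{1,r}$, and $\mathbb{F}_{1,p,r}$ consists of ordered sequences $(\tau_1,\dots,\tau_p)$ where the single vertex at height $r$ lies in $\tau_1$, summing the weights $\prod_{v\in\mathcal{F}^*}\theta(c_v)$ over this set factorises: $\tau_1$ contributes the probability that a Galton–Watson tree of offspring law $\theta$ has exactly one descendant at generation $r$, namely $(g_\theta^{(r)})'(0)$, while each of $\tau_2,\dots,\tau_p$ contributes the extinction-before-$r$ probability $g_\theta^{(r)}(0)$. Combined with \eqref{forest-proba}, this yields
\[
\mathbb{P}(L_r = p) \;=\; \frac{4^{-p}C(p)}{4^{-1}C(1)}\,(g_\theta^{(r)})'(0)\,\bigl(g_\theta^{(r)}(0)\bigr)^{p-1}.
\]
(This is the analogue of the identity underlying \cite[Lemma~4]{curien-legall}, and it does sum to $1$ in $p$ by the stationarity of $h$ established in \Cref{skeleton-is-proba-lem}.)

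Next I would use \eqref{iterate-expr} to get the explicit expressions
\[
g_\theta^{(r)}(0) = 1 - \frac{3}{(r+1)(r+3)}, \qquad (g_\theta^{(r)})'(0) = \frac{9(r+2)}{2\bigl((r+1)(r+3)\bigr)^{2}},
\]
together with the bound $4^{-p}C(p) \le K\sqrt{p}$ for all $p\ge 1$, which follows from the asymptotic $C(p)\sim \frac{\sqrt 3}{2\pi}4^{p}\sqrt{p}$ stated in \eqref{gf-expression}. Plugging all of this into the identity above and using $(1-x)^{p-1}\le e^{-(p-1)x}$, we arrive at a uniform upper bound
\[
\mathbb{P}(L_r = p) \;\le\; K' \,\frac{\sqrt{p}}{r^{3}}\,\exp\!\left(-\frac{3(p-1)}{(r+1)(r+3)}\right).
\]

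For \eqref{top-length-bound1} I would simply optimise the right-hand side over $p$: the supremum is attained when $p$ is of order $(r+1)(r+3)\asymp r^{2}$, giving a value of order $\sqrt{r^{2}}/r^{3} = 1/r^{2}$, hence the claimed bound. For \eqref{top-length-bound2} I would sum the tail for $p\ge \alpha r^{2}$; with the change of variables $u = 3p/((r+1)(r+3))$, the sum is comparable to $\int_{c\alpha}^{\infty}\sqrt{u}\,e^{-u}du$ with $c$ bounded below away from $0$ (uniformly in $r$, because $r^2/(r+1)(r+3)$ is bounded below). This integral decays at a rate much faster than $e^{-\alpha/4}$; for small $\alpha$ the bound $C_0 e^{-\alpha/4}\ge 1$ holds trivially by adjusting $C_0$. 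The only real obstacle is the derivation of the closed-form identity in the first step, after which everything reduces to routine estimation of the explicit expression.
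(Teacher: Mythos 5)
Your proposal is correct and follows essentially the same route as the paper: the paper writes the same identity as $\mathbb{P}(L_r=p)=\frac{h(p)}{h(1)}\mathcal{P}_p(Y_r=1)$ (arriving at it via $\mathbb{F}''_{1,p,r}$ and a cyclic shift rather than directly through $\mathbf{P}_{1,r}$ on $\mathbb{F}_{1,p,r}$, but these agree since the factors of $p$ and $1/p$ cancel), then uses $h(p)\lesssim p^{-1/2}$ and the explicit form of $\mathcal{P}_p(Y_r=1)$ from \eqref{iterate-expr} to get the same bound $\mathbb{P}(L_r=p)\lesssim r^{-2}\sqrt{p/r^2}\,e^{-3p/r^2}$, and concludes both inequalities exactly as you do.
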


Let us fix some notation before getting to the proof of \Cref{top-length-bounds}. For $1 \leq r < s$, let $\mathcal{F}^{(1)}_{r,s}$ be the skeleton of $B^{\bullet}_s(\mathcal{T}^{(1)}_{\infty}) \backslash B^{\bullet}_r(\mathcal{T}^{(1)}_{\infty})$. We let $\widetilde{\mathcal{F}}^{(1)}_{r,s}$ by the non-pointed forest obtained by a uniform cyclic permutation of $\mathcal{F}^{(1)}_{r,s}$, and by forgetting the distinguished vertex. Thus, on the event $\{L_r = p\}\cap \{L_s = q\}$, $\widetilde{\mathcal{F}}^{(1)}_{r,s}$ is a random element of $\mathbb{F}''_{p,q,s-r}$.

\begin{proof}
These bounds are obtained very similarly to those of Lemma 4 in {\cite{curien-legall}}, with the (small) additional difficulty that in our case we do not have an explicit expression for $h$, but only asymptotics. We give the full argument here as it consists in a short but rather involved computation.

First observe that
\begin{equation*}
\Prob{L_r=p}=\sum_{\mathcal{F} \in \mathbb{F}''_{1,p,r}}\Prob{\widetilde{\mathcal{F}}^{(1)}_{0,r}=\mathcal{F}}=\sum_{\mathcal{F} \in \mathbb{F}''_{1,p,r}}\frac{h(p)}{h(1)}\prod_{v \in \mathcal{F}^*}\theta(c_v).
\end{equation*}
Thus
\begin{equation*}
\Prob{L_r=p}=\frac{h(p)}{h(1)}\mathcal{P}_p\left(Y_r=1 \right).
\end{equation*}
From the definition of $h$ and the asymptotics of $C(p)$, there exists a constant $C_1$ such that, for every $p \geq 1$,
\begin{equation*}
h(p)\leq \frac{C_1}{\sqrt{p}}.
\end{equation*}
Moreover, from \eqref{iterate-expr}, we have 
\begin{equation}
\label{proba-gen-r-0}
\mathcal{P}_1\left( Y_r=0\right)=1 - \frac{3}{(r+2)^2-1},
\end{equation}
hence
\begin{align*}
\mathcal{P}_p\left( Y_r=1\right)&=\lim_{x \downarrow 0}x^{-1}\left(\mathcal{E}_p\left[x^{Y_r} \right] - \mathcal{P}_p\left( Y_r=0\right)\right)\\
&=\lim_{x \downarrow 0}x^{-1}\left(\left(1 - \frac{3}{\left(\sqrt{\frac{4-x}{1-x}}+r\right)^2-1}\right)^p - \left(1  -\frac{3}{(r+2)^2-1}\right)^p\right)\\
&=\frac{9p(r+2)}{2((r+2)^2-1)((r+2)^2-4)}\left(1 - \frac{3}{(r+2)^2-1} \right)^p.
\end{align*}
Therefore, for some constant $C_3 >0$,
\begin{equation*}
\Prob{L_r=p}\leq \frac{C_1}{h(1)}\sqrt{p}\frac{9(r+2)}{((r+2)^2-1)((r+2)^2-4)}\left(1 - \frac{3}{(r+2)^2-1} \right)^{p-1} \leq \frac{C_3}{r^2}\sqrt{\frac{p}{r^2}}e^{-3p/r^2}.
\end{equation*}
The bound \eqref{top-length-bound1} immediately follows. As for \eqref{top-length-bound2}, since the function $x \mapsto \sqrt{x}e^{-3x}$ is decreasing for $x \geq 1/6$, we have, for $\alpha \geq 1/6$, for some constant $C_4 >0$,
\begin{equation*}
\Prob{L_r> \alpha r^2}\leq \sum_{p=\alpha r^2+1}^{\infty}\frac{C_3}{r^2}\sqrt{\frac{p}{r^2}}e^{-3p/r^2} \leq \frac{C_3}{r^2} \int_{\alpha r^2}^{\infty}\sqrt{\frac{x}{r^2}}e^{-3x/r^2}\mathrm{d}x \leq C_4 e^{-\alpha/4}.
\end{equation*}
\end{proof}

We now fix a positive constant $a \in (0,1)$. For every integer $r \geq 1$, let $N^{(a)}_r$ be uniformly random in $\{\lfloor ar^2\rfloor +1, \dots, \lfloor a^{-1}r^2\rfloor\}$. We also consider a sequence $\tau_1, \tau_2, \dots$ of independent Galton-Watson trees with offspring distribution $\theta$, independent of  $N^{(a)}_r$. For every integer $j \geq 0$, we write $[\tau_i]_j$ for the tree $\tau_i$ truncated at generation $j$. 

Using the same arguments that yield Proposition 5 from Lemma 4 in {\cite{curien-legall}}, the above lemma implies the following bound:
\begin{prop}
\label{prop-comparison-pple}
There exists a constant $C_1$, which only depends on $a$, such that, for every sufficiently large integer $r$, for every choice of $s \in \{r+1, r+2, \dots \}$, for every choice of integers $p$ and $q$ with $ar^2<p,q\leq a^{-1}r^2$, for every forest $\mathcal{F} \in  \mathbb{F}''_{p,q,s-r}$,
\begin{equation}
\Prob{\widetilde{\mathcal{F}}^{(1)}_{r,s}=\mathcal{F}}\leq C_1 \Prob{([\tau_1]_{s-r}, \dots, [\tau_{N^{(a)}_r}]_{s-r})=\mathcal{F}}.
\end{equation}
\end{prop}

\subsection{Leftmost mirror geodesics}

We now define a type of paths in Eulerian cylinder triangulations that will be useful in the sequel.\\

Let $\Delta$ be an Eulerian cylinder triangulation of height $r \geq 1$. Let $x$ be a type-$j$ vertex of $\partial_{j}\Delta$, with $1 \leq j \leq r$. We define the \textbf{leftmost mirror geodesic} from $x$ to the bottom cycle in the following way. Enumerate in clockwise order around $x$ all the half-edges incident to it, starting from the half-edge of $\partial_j{\Delta}$ that is to the right of $x$. The first edge on the leftmost mirror geodesic starting from $x$ is the last edge connecting $x$ to $\partial_{j-1}\Delta$ arising in this order. The path is then continued by induction (see \Cref{LeftmostMirrorGeods}). Note that, taken in the reverse order, this path is an oriented geodesic, hence the name \emph{mirror} geodesic. (Such a precision is not necessary in {\cite{curien-legall}}, that deals with proper, symmetric distances.)

\begin{figure}[htp]
\centering
\includegraphics[scale=1.2]{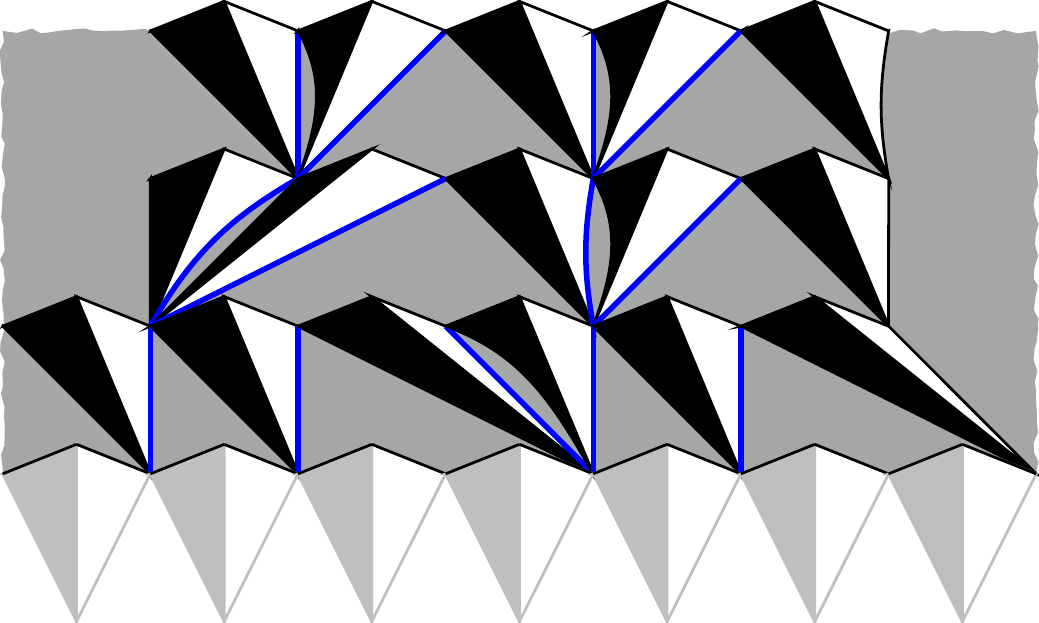}
\caption{Some leftmost mirror geodesics (depicted in blue) in a portion of an Eulerian cylinder triangulation.}
\label{LeftmostMirrorGeods}
\end{figure}

The coalescence of leftmost geodesics from distinct vertices can be characterized by the skeleton of $\Delta$. Indeed, let $u, v$ be two distinct type-$r$ vertices of $\partial^*\Delta$. Let $\mathcal{F}$ be the skeleton of $\Delta$, $\mathcal{F}'$ the subforest of $\mathcal{F}$ consisting of the trees rooted between $u$ and $v$ left-to-right in $\partial^*\Delta$, and $\mathcal{F}''$ be the rest of the trees in $\mathcal{F}$. Then, for any $k \in \{1, 2, \dots, r\}$, the leftmost mirror geodesics from $u$ and $v$ merge before step $k$ (possibly exactly at step $k$) if and only if at least one of the two forests $\mathcal{F}'$ and $\mathcal{F}''$ have height strictly smaller than $k$.

\section{The Lower Half-Plane Eulerian Triangulation}
\label{sec lhpet}

We now construct a triangulation of the lower half-plane $\mathbb{R} \times \mathbb{R}_-$ that will be crucial to prove \Cref{thm-total-asympt-prop-in-finite-trig}, and that also is an object of interest in itself. Note that this construction is very similar that of the LHPT in {\cite[Section~3.2]{curien-legall}}.

We start with a doubly infinite sequence $(\mathscr{T}_i)_{i \in \mathbb{Z}}$ of independent Galton-Watson trees with offspring distribution $\theta$. They are embedded in the lower half-plane so that, for every $i \in \mathbb{Z}$, the root of $\mathscr{T}_i$ is $(\frac{1}{2}+i,0)$, and such that the collection of all vertices of all the $\mathscr{T}_i$ is  $(\frac{1}{2}+\mathbb{Z}) \times \mathbb{Z}_{\leq 0}$, with vertices at height $k$ being of the form $(\frac{1}{2}+i,-k)$. We also assume that the embedding is such that the collection of vertices of the $\mathscr{T}_i$, for $i \geq 0$, is  $(\frac{1}{2}+\mathbb{Z}_{\geq 0}) \times \mathbb{Z}_{\leq 0}$ (see \Cref{sketch-lhpet}).

We can now build the triangulation itself. We start with the ``distinguished'' modules, which will play the role of skeleton modules for our infinite triangulation. They are naturally associated with the vertices of the infinite collection of trees in the following way. To each vertex $(\frac{1}{2}+i,j)$ in one of the trees, we associate a module whose type $n+1$ vertices are $(i,j)$ and $(i+1,j)$. The type $n$ vertex is $(k,j-1)$, where $k$ is the minimal integer such that $(\frac{1}{2}+k,j-1)$ is the child of $(\frac{1}{2}+i',j)$, for some $i'>i$. The last vertex, of type $n+2$, is set to be $(\frac{1}{2}+i,j+\varepsilon)$, for an arbitrary $0<\varepsilon<1$. As for the (outer) edges of these skeleton modules, we draw them such that they are all distinct, and do not cross. Having completely determined the configuration of the skeleton edges from the infinite collection of trees, we fill in the slots bounded by these modules, with independent Boltzmann Eulerian triangulation of appropriate perimeters. (Note that each point of the form $(i, j)$, with $j\geq 1$, is at the top of a slot of perimeter $2(c_{i,j}+1)$, where $c_{i,j}$ is the number of children of $(\frac{1}{2}+i,j)$ in the infinite collection of trees.)

We obtain an Eulerian triangulation of the lower-half plane, which we will note $\mathcal{L}$ and call the \textbf{Lower Half-Plane Eulerian Triangulation} (LHPET). It is rooted at the edge from $(0,0)$ to $(\frac{1}{2}, \varepsilon)$.

We will denote by $\mathcal{L}_{ [0,r]}$ the infinite rooted planar map obtained by keeping only the first $r$ layers of $\mathcal{L}$ (having the skeleton modules at level $r$ as ghost modules), and denote by $\mathcal{L}_r$ the lower boundary of $\mathcal{L}_{[0,r]}$. For integers $0 \leq m < n$, we also define $\mathcal{L}_{[m,n]}$, to be the map obtained by keeping only the layers of $\mathcal{L}$ that lie between the levels $m$ and $n$ (the skeleton modules at level $m$ making up the top boundary of $\mathcal{L}_{[m,n]}$, and the ones at level $n$ being its bottom ghost modules).

\begin{figure}[htp]
\centering
\includegraphics{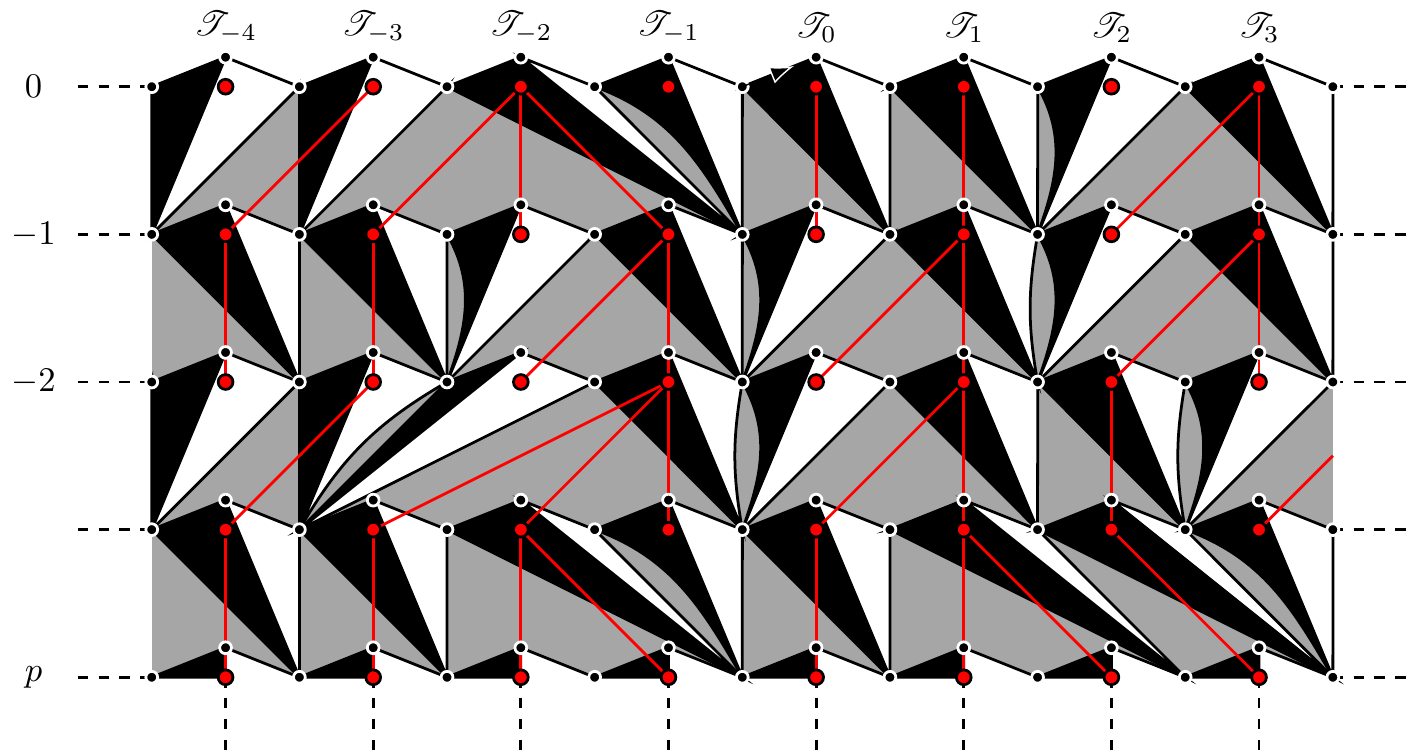}
\caption{Construction of the LHPET.}
\label{sketch-lhpet}
\end{figure}

While we will not use this result in the sequel, note that $\mathcal{L}$ is the local limit of $\mathcal{T}_{\infty}^{(p)}$  ``seen from infinity''. This statement is made more precise in the following proposition:
\begin{prop}
\label{prop lim loc lhpet}
Set $p \geq 1$, and for every $r \geq 1$, define $\widetilde{B}^{\bullet}_{s}\left(\mathcal{T}^{(p)}_{\infty}\right)$ as the hull $B^{\bullet}_{s}\left(\mathcal{T}^{(p)}_{\infty}\right)$ re-rooted at an edge uniform on those of $\partial_s\left(\mathcal{T}^{(p)}_{\infty}\right)$ that are oriented so that the top face is lying on their left. Then
\[
\widetilde{B}^{\bullet}_{s}\left(\mathcal{T}^{(p)}_{\infty}\right)\xrightarrow[s \to \infty]{(d)} \mathcal{L}
\]
in the sense of local limits of rooted planar maps.
\end{prop}

An equivalent result was stated for usual triangulations in {\cite[Proposition~7]{curien-legall}}, but its proof was not detailed, since it is similar to the proof of the equivalent convergence to the \emph{Upper} Half-Plane Triangulation. We give a proof of our result both for the sake of completeness, and because it involves nonetheless a few arguments that are different from the ones for the upper half-plane models. 

\begin{proof}
Recall that, for an Eulerian triangulation $A$ (possibly with a boundary) and an integer $r \geq 1$, we denote by $\mathcal{B}_r(A)$ the ball of radius $r$ of $A$, that is, the union of all edges and faces of $A$ incident to a vertex at (oriented) distance strictly less than $r$ from the root. Proving the proposition amounts to showing that, for every $r \geq 1$, for every rooted planar map $A$,
\begin{equation}
\label{eq:cv-of-displaced-balls-to-l}
\Prob{\mathcal{B}_r\left(\widetilde{B}^{\bullet}_{s}\left(\mathcal{T}^{(p)}_{\infty}\right)\right)=A} \xrightarrow[s \to \infty]{}  \Prob{\mathcal{B}_r(\mathcal{L})=A}.
\end{equation}

To obtain this convergence, we will need a bit of additional notation.  We fix $r \geq 1$, and note $[\mathscr{T}]_r$ for the tree $\mathscr{T}$ truncated at height $r$, and similarly for a forest. For any $s \geq 1$, we write $\mathcal{F}_{0,s}^{(p)}=\left( \mathscr{T}_0^{(p)}, \mathscr{T}_1^{(p)}, \dots, \mathscr{T}_{L_s^{(p)}}^{(p)}\right)$ for the skeleton of $B^{\bullet}_{s}\left(\mathcal{T}^{(p)}_{\infty}\right)$. Let us fix $k \geq 1$. For any $q \geq 1$, for any forest $\mathcal{F}=(\sigma_0, \dots, \sigma_{l-1}) \in \mathbb{F}_{q,l,r}$ with $l \geq 2k+1$, we write $\Phi_k(\mathcal{F})=(\sigma_{i-k}, \dots, \sigma_{i-1}, \sigma_{i}, \dots, \sigma_{i+k})$, where $i$ is a uniform index on $0, \dots, l-1$, and the indices for the $\sigma_j$ are extended to $\mathbb{Z}$ by periodicity.

We will prove that, for every collection $\mathcal{F}_k=(\tau_{-k}, \dots, \tau_0, \dots, \tau_k)$ of $2k+1$ plane trees of maximal height $r$,
\begin{equation}
\label{eq:cv-of-displaced-forest-to-gw}
\Prob{\{\Phi_k\left([\mathcal{F}_{0,s}]_r\right)=\mathcal{F}_k\} \cap \{L_s^{(p)} \geq 2k+1\}} \xrightarrow[s \to \infty]{} \Prob{\left( [\mathscr{T}_{-k}]_r, \dots, [\mathscr{T}_{0}]_r, \dots, [\mathscr{T}_{k}]_r\right)=\mathcal{F}_k}.
\end{equation}

If $k$ is large enough, we can find a set $\mathbf{F}_k$ of forests such that the probability of the event 
\[
\left( [\mathscr{T}_{-k}]_r, \dots, [\mathscr{T}_{0}]_r, \dots, [\mathscr{T}_{k}]_r\right) \in \mathbf{F}_k
\]
is close to 1, and such that, on that event, the ball $\mathcal{B}_r(\mathcal{L})$ is a deterministic function of the truncated trees $[\mathscr{T}_{-k}]_r, \dots, [\mathscr{T}_{0}]_r, \dots, [\mathscr{T}_{k}]_r$ and of the triangulations with a boundary filling in the slots associated with the vertices of these trees. (Note that we need $k$ to be large, so that the $(2k+1)$ central trees of the skeleton of $\mathcal{L}$ and the associated slots are enough to cover the ball $\mathcal{B}_r(\mathcal{L})$, not only vertically, which is a given, but also horizontally.) Likewise, on the event $\{\Phi_k\left([\mathcal{F}_{0,s}]_r\right)\in \mathbf{F}_k\} \cap \{L_s^{(p)} \geq 2k+1\}$, the ball $\mathcal{B}_r\left( \widetilde{B}^{\bullet}_{s}\left(\mathcal{T}^{(p)}_{\infty}\right)\right)$ is given by the same deterministic function of the trees in $\Phi_k\left([\mathcal{F}_{0,s}]_r\right)$ and of the associated triangulations with a boundary.

Moreover, we claim that, for every fixed $p \geq 1$ and $j \geq 1$,
\begin{equation}
\label{eq:high-cycles-are-small}
\Prob{L_s^{(p)} =j} \xrightarrow[s \to \infty]{}0.
\end{equation}
Indeed, we can write
\[
\Prob{L_s^{(p)} =j}=\frac{h(j)}{h(p)}\mathcal{P}_j\left(Y_s=p \right),
\]
and, from $\mathbb{E}_{j}[x^{Y_s}]=(g_{\theta}^{(s)}(x))^j$, we get that $\mathcal{P}_j\left(Y_s=p \right) \xrightarrow[s \to \infty]{}0$.

Thus, the desired convergence of \eqref{eq:cv-of-displaced-balls-to-l} will follow from \eqref{eq:cv-of-displaced-forest-to-gw} and \eqref{eq:high-cycles-are-small}.

It remains to prove \eqref{eq:cv-of-displaced-forest-to-gw}. Let us fix $\mathcal{F}_k$ as above. From the definition of the $\mathscr{T}_i$, we have
\begin{equation}
\label{eq: proba of centerend forest in L}
\Prob{\left( [\mathscr{T}_{-k}]_r, \dots, [\mathscr{T}_{0}]_r, \dots, [\mathscr{T}_{k}]_r\right)=\mathcal{F}_k} = \prod_{v \in (\tau_{-k}, \dots, \tau_k)^*} \theta(c_v),
\end{equation}
where, as before, for a forest $\mathcal{F}$, $\mathcal{F}^*$ denotes the set of vertices in $\mathcal{F}$ that are not at the maximal height, and, for such a vertex $v$, $c_v$ is its number of children.

Now, using the definition of the law $\mathbf{P}_{p,s}$ of $B_s^{\bullet}\left(\mathcal{T}_{\infty}^{(p)}\right)$, the left-hand side of \eqref{eq:cv-of-displaced-forest-to-gw} is equal to 
\begin{align*}
&\sum_{l=2k+1}^{\infty}\sum_{\mathcal{F} \in \mathbb{F}_{p,l,s},\Phi_k\left(\mathcal{F}\right)=\mathcal{F}_{k}} \frac{4^{-l}C(l)}{4^{-p}C(p)} \prod_{v \in \mathcal{F}^*} \theta(c_v) \\
&=\left( \prod_{v \in  (\tau_{-k}, \dots, \tau_k)^*} \theta(c_v)\right) \cdot \left( \sum_{l=2k+1}^{\infty}  \frac{4^{-l}C(l)}{4^{-p}C(p)} \prod_{\substack{v \in (\sigma_0, \dots, \sigma_{l-2k-1})^* \bigcup (\tilde{\sigma}_1, \dots, \tilde{\sigma}_{m_k})^*\\\#\sigma_0(s) + \dots +\#\sigma_{l-2k-1}(s) +\# \tilde{\sigma}_1(s-r) + \dots +\# \tilde{\sigma}_{m_k}(s-r)=p}} \theta(c_v)\right),
\end{align*}
where $m_k$ is the number of vertices at generation $r$ in $\mathcal{F}_k$, while $\sigma_0, \dots, \sigma_{l-2k-1}$ stand for the trees (of maximal height $s$) not selected in $\mathcal{F}_k$, and $\tilde{\sigma}_1, \dots, \tilde{\sigma}_{m_k}$ stand for the trees (of maximal height $s-r$) obtained after truncation of the selected trees.

Let us denote by $A_s$ the second term of the second line of the previous equation. To conclude the proof, it suffices to show that 
\begin{equation}
\label{eq no loss of mass for lhpet}
\liminf_s A_s \geq 1.
\end{equation}
Indeed, in that case the liminf of the quantities in the left-hand side of \eqref{eq:cv-of-displaced-forest-to-gw} are greater than or equal to the right-hand side, for any choice of the forest $\mathcal{F}_k$. As the sum of the quantities on the right-hand side of \eqref{eq:cv-of-displaced-forest-to-gw} over these choices is equal to 1, necessarily the desired convergence holds.

Let us thus show \eqref{eq no loss of mass for lhpet}. Set $\varphi(l):=4^{-l}C(l)$. We have
\[
A_s =\sum_{l=2k+1}^{\infty}\frac{\varphi(l)}{\varphi(p)}\sum_{q=0}^{p} \mathcal{P}_{l-(2k+1)}\left(Y_s=q\right)\mathcal{P}_{m_k}\left(Y_{s-r}=p-q\right).
\]

First, as $\theta$ is a critical offspring distribution, we get from \cite{papangelou} that, for any $q\geq 0$,
\[
\frac{\mathcal{P}_{m_k}\left(Y_{s-r}=p-q\right)}{\mathcal{P}_{m_k}\left(Y_{s}=p-q\right)} \xrightarrow[s \to \infty]{} 1.
\]

Thus, for any $l \geq 2k+1$, for every $\varepsilon >0$, for any sufficiently large $s$,
\begin{align*}
  \sum_{q=0}^p \mathcal{P}_{l-(2k+1)}\left(Y_s=q\right)\mathcal{P}_{m_k}\left(Y_{s-r}=p-q\right) & \geq (1 - \varepsilon) \sum_{q=0}^p \mathcal{P}_{l-(2k+1)}\left(Y_s=q\right)\mathcal{P}_{m_k}\left(Y_{s}=p-q\right) \\
   & \geq (1 - \varepsilon) \mathcal{P}_{l-(2k+1)+m_k}\left(Y_s=p\right).
\end{align*}

This implies that:
\[
A_s \geq (1 - \varepsilon) \sum_{l=2k+1}^{\infty}\frac{\varphi(l)}{\varphi(p)}\mathcal{P}_{l-(2k+1)+m_k}\left(Y_s=p\right).
\]

Now, from the asymptotics of $C(l)$, we have that, for some $l_0 \geq 0$, for any $l \geq l_0$, we have
\[
\varphi(l) \geq (1- \varepsilon)\varphi(l-(2k+1)+m_k),
\]

so that,
\[
A_s \geq (1 - \varepsilon)^2 \sum_{l=m_k \vee l_0}^{\infty}\frac{\varphi(l)}{\varphi(p)}\mathcal{P}_{l}\left(Y_s=p\right).
\]

Recall that $\varphi(l)=lh(l)$, which yields:
\begin{align*}
A_s &\geq (1 - \varepsilon)^2 \sum_{l=m_k \vee l_0 \vee p}^{\infty}\frac{h(l)}{h(p)}\mathcal{P}_{l}\left(Y_s=p\right)\\
&= (1 - \varepsilon)^2 \left( 1 - \sum_{l=0}^{m_k \vee l_0 \vee p-1}\frac{h(l)}{h(p)}\mathcal{P}_{l}\left(Y_s=p\right)\right),
\end{align*}
the last equality stemming from \eqref{h-stationary}.

Finally, we use once again the fact that, for any fixed $l$,
\[
\mathcal{P}_{l}\left(Y_s=p\right) \xrightarrow[s \to \infty]{} 0,
\]

to get that, for any $\varepsilon >0$,
\[
\liminf_{s}A_s \geq (1 - \varepsilon)^2.
\]

As $\varepsilon$ was completely arbitrary in the above chain of arguments, we get that
\[
\liminf_{s}A_s \geq 1.
\]

This completes the proof of the proposition.
\end{proof}

\section{Distances along the half-plane boundary}
\label{sec bounds}

To fulfill our goal of showing the asymptotic equivalence between the oriented and non-oriented distances in uniform Eulerian triangulations, we need as a technical ingredient some estimates on the (oriented) distances along the boundary of $\mathcal{L}$.

Note that the vertices on $\partial \mathcal{L}$ are of two types, those of coordinates $(i,0)$ for some $i \in \mathbb{Z}$, and those of coordinates $(i + 1/2,\varepsilon)$, for some $i \in \mathbb{Z}$. To simplify notation, the results in this section only deal with the distances between vertices of the first type, since we are interested in asymptotic estimates, and including the vertices of the second type only adds 1 or 2 to the considered distances. We will lay the stress on this generalization whenever it arises later in the paper.\\

In the sequel, we will use leftmost mirror geodesics, that were defined in \Cref{sect-skeleton} for finite cylinder triangulations, and that we generalize now to $\mathcal{L}$. For any $i \in \mathbb{Z}$, the leftmost mirror geodesic from $(i,0)$ in $\mathcal{L}$ is an infinite path $\omega$ in $\mathcal{L}$, whose reverse is an oriented geodesic, and that visits a vertex $\omega(n)$ in $\mathcal{L}_n$ at every step $n \geq 0$. It starts at $(i,0)$, and is obtained by choosing at step $n+1$ the leftmost edge between $\omega(n)$ and $\mathcal{L}_{n+1}$. As before, for $i <j$, the leftmost mirror geodesics from $(i,0)$ and $(j,0)$ will coalesce before hitting $\mathcal{L}_r$, if and only if all the trees $\mathscr{T}_{i}, \mathscr{T}_{i+1}, \dots, \mathscr{T}_{j-1}$ all have height strictly smaller than $r$.

\subsection{Block decomposition and lower bounds}

We first want to obtain lower bounds on the distances along the boundary of $\mathcal{L}$. For that purpose, we adapt the \textbf{block decomposition} of causal triangulations {\cite[Section~2.1]{chn}}, to $\mathcal{L}$.

\begin{figure}[htp]
\centering
\includegraphics{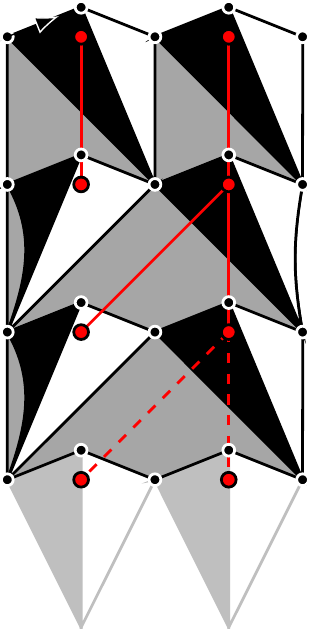}
\caption{The block of height 3 between $\mathscr{T}_{0}$ and $\mathscr{T}_1$ in the triangulation of \Cref{sketch-lhpet}. As before, ghost modules are shown in pale grey.}
\label{sketch-block}
\end{figure}

For $r \geq 1$, we define the random map $\mathcal{G}_r$ to be the planar map obtained from $\mathcal{L}_{[0,r]}$ by keeping only the faces and edges that are between $\mathscr{T}_0$ and $\mathscr{T}_{i_r}$, where $i_r$ is the smallest integer $i >0$ such that $\mathscr{T}_{i}$ has height at least $r$. More precisely, we only keep the skeleton modules that are at height smaller than or equal to $r$, belonging to trees $\mathscr{T}_i$, with $0 \leq i \leq i_r$, and the slots that are to the left of all these skeleton modules (see \Cref{sketch-block} for an example). Thus, $\mathcal{G}_r$ has one boundary that is naturally divided into four parts: the upper and lower parts that it shares with $\mathcal{L}_{[0,r]}$, and the left and right parts.

Note that $\mathcal{L}$ contains a lot of submaps that have the same law as $\mathcal{G}_r$: if $\mathscr{T}_i$, $\mathscr{T}_j$ are two consecutive trees reaching height $r$ in the skeleton of $\mathcal{L}$ (with $i <j$), we can define the submap of $\mathcal{L}_{[0,r]}$ encased between $\mathscr{T}_i$ (strictly) and $\mathscr{T}_j$ (included), which is obtained by keeping only the skeleton modules belonging to trees $\mathscr{T}_k$, with $i < k \leq j$, and the slots that are to the left of all these skeleton modules. Such a map has the same law as $\mathcal{G}_r$.

We call any map that can be a realization of $\mathcal{G}_r$, a \textbf{block of height \boldmath$r$}.\\

We define the \textbf{diameter} of \unboldmath$\mathcal{G}_r$, denoted \boldmath$\textbf{Diam}(\mathcal{G}_r)$ to be the minimal oriented distance from a vertex on its left boundary, to a vertex on its right boundary. Note that this diameter is not uniformly large when \unboldmath$r$ is large. However, we will now show that a long block is also typically wide. To do so, we consider the \textbf{median diameter} of a block.
\begin{defnt}
For any $r \geq 1$, let $f(r)$ be the median diameter of $\mathcal{G}_r$, that is, the largest number such that
\[
\Prob{\text{Diam}(\mathcal{G}_r) \geq f(r)} \geq \frac{1}{2}.
\]
\end{defnt}

We show the following upper bound on the median diameter, which is similar to the first part of {\cite[Theorem~5]{chn}}:
\begin{thm}
\label{thm-median-block-diameter}
There exists $c>0$ such that 
\[
f(r) \geq cr,
\]
for all $r$ sufficently large.
\end{thm}

Let us introduce a bit of notation before explaining how to prove this theorem. For any $m\geq 1$ and $h \geq 0$, consider the layer $\mathcal{L}_{[h,h+m]}$: it is composed of a (bi-infinite) sequence of blocks of height $m$, $(\mathcal{G}_{m}(i,h))_{i \in \Z}$. To avoid ambiguities, we set $\mathcal{G}_{m}(1,h)$ to be the block that has a part of $\mathscr{T}_k$ as its right boundary, where $k$ is the smallest integer $l \geq 1$ such that $\mathscr{T}_l$ has height at least $h+m$. For fixed $h,m$, these blocks are independent and distributed as $\mathcal{G}_m$. For $r \geq h+m$, we denote by $N_r(m,h)$ the maximal index $i$ such that the block $\mathcal{G}_m(i,h)$ is a sub-block of $\mathcal{G}_r$. (Note that, by our convention, the minimal such index is $i=1$, so that $N_r(m,h)$ is also the number of blocks $\mathcal{G}_m(i,h)$ that are sub-block of $\mathcal{G}_r$.)

To prove \Cref{thm-median-block-diameter}, we use, like in {\cite{chn}}, a renormalization scheme, splitting $\mathcal{G}_r$ into smaller blocks. This relies on an estimate of the numbers $N_r(2m,lm)$:

\begin{lemma}
\label{lem-lots-of-subblocks}
There exists $c > 0$ such that, for every $1 \leq m \leq cr$, we have
\[
\Prob{\inf_{0 \leq l \leq (r/m) -2}N_r(2m,lm) \geq c \left( \frac{r}{m}\right)^2} \geq \frac{7}{8}.
\]
\end{lemma}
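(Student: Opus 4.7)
The plan is to express $N_r(2m,lm)$ in terms of the skeleton of $\mathcal{G}_r$, lower bound it by a sum of mostly independent random variables, establish levelwise concentration, and conclude via a union bound over the $\asymp r/m$ values of $l$.

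First, recall that $\mathcal{G}_r$ is built from the i.i.d.\ Galton-Watson trees $\mathscr{T}_0,\mathscr{T}_1,\dots,\mathscr{T}_{i_r}$, where $i_r := \inf\{i\geq 1:\mathscr{T}_i\text{ has height}\geq r\}$. By \eqref{proba-gen-r-0}, $\mathcal{P}_1(Y_r\geq 1) = 3/((r+2)^2-1) \sim 3/r^2$, so $i_r$ is geometric with this parameter, and for $K$ large enough the event $E_K := \{i_r \in [r^2/K,\,Kr^2]\}$ has probability $\geq 15/16$. I would work on $E_K$ throughout.

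Fix $l\in\{0,\dots,\lfloor r/m\rfloor-2\}$. By construction, $N_r(2m,lm)+1$ equals the number of depth-$lm$ vertices of the skeleton of $\mathcal{G}_r$ having a descendant at depth $(l+2)m$. I would lower bound this by
\[
Z_l := \sum_{i=1}^{i_r-1}\mathbf{1}\{\mathscr{T}_i\text{ has height}\geq (l+2)m\} + S_l^{(i_r)},
\]
where $S_l^{(i_r)}$ is the same count restricted to $\mathscr{T}_{i_r}$. Conditionally on $i_r$, the sum is $\mathrm{Bin}(i_r-1,\,q_l)$ with $q_l := \mathcal{P}_1(Y_{(l+2)m}\geq 1 \mid Y_r=0)$, so on $E_K$ its expectation is of order $(r/((l+2)m))^2$. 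A direct if somewhat tedious computation using the iterate formula \eqref{iterate-expr} for the generating function of $Y_{lm}$ shows that $\Expect{S_l^{(i_r)}}$ is of order $(r/m)^2$ uniformly in $l$: heuristically, the Binomial part of $Z_l$ dominates for small $l$, while $S_l^{(i_r)}$ dominates for $l$ near $\lfloor r/m\rfloor-2$. In either case, $\Expect{Z_l}\gtrsim (r/m)^2$ uniformly in $l$.

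To upgrade these expectations to high-probability bounds, I would apply a Chernoff bound to the Binomial summand (yielding failure probability $\exp(-c(r/m)^2)$ on $E_K$) and a second-moment/truncation argument to $S_l^{(i_r)}$. The main obstacle is that $\theta$ has the heavy tail $\theta(k)\sim \tfrac{1}{2}\sqrt{3/\pi}\,k^{-5/2}$ (see \eqref{theta-asympt}), so $\mathrm{Var}(Y_h)=\infty$ and naive Chebyshev on $S_l^{(i_r)}$ fails. I would circumvent this by working with $Y_{lm}^{(i_r)}\wedge L$ for $L\asymp r^2$: using the power-law tail $\mathcal{P}_1(Y_h=k)\sim \sqrt{3/\pi}\,(h/2)\,k^{-5/2}$ from \eqref{gw-tail}, the truncated second moment is bounded by a constant times $L^{1/2}\cdot h$, Chebyshev applies to the truncated sum, and the event $\{Y_{lm}^{(i_r)}>L\}$ has polynomially small probability and so can be absorbed. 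The resulting per-level failure probability is polynomially small in $r$, and summing over the $\lfloor r/m\rfloor-1$ levels gives a total failure $\ll 1/16$ for $c$ small enough, which combined with $\Prob{E_K}\geq 15/16$ yields the claimed probability $\geq 7/8$.
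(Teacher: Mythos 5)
The proposal has a genuine gap at the concentration step, and the decomposition via $Z_l$ is not strong enough to carry the argument.

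First, the Binomial part: you correctly note that on $E_K$ the mean of $\mathrm{Bin}(i_r-1,q_l)$ is of order $\left(r/((l+2)m)\right)^2$. But this is far below the target $(r/m)^2$ as soon as $l+2$ exceeds a constant depending on $c$: already for $(l+2)m\asymp r/10$ the Binomial mean is $O(1)$. (The Chernoff failure probability you quote, $\exp(-c(r/m)^2)$, implicitly uses mean $\asymp(r/m)^2$ and so does not apply to these intermediate $l$.) So for essentially the entire range of $l$, the whole burden falls on $S_l^{(i_r)}$ alone.

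Second, $S_l^{(i_r)}$ is a functional of a single tree $\mathscr T_{i_r}$ conditioned on $\{Y_r\geq 1\}$, and a single heavy-tailed random variable does not concentrate. Its expectation is indeed $\asymp(r/m)^2$, but rescaled by $(r/m)^2$ it has a non-degenerate distributional limit (the conditioned generation sizes live on the $r^2$ scale), so $\Prob{S_l^{(i_r)}<a(r/m)^2}$ tends to a positive constant, not to $0$. The truncated second-moment estimate $\Expect{(Y_h\wedge L)^2}=O(L^{1/2}h)$ that you invoke is the \emph{unconditional} bound for $Y_h$ under $\mathcal P_1$; under $\{Y_r\geq 1\}$ it is inflated by $1/\mathcal P_1(Y_r\geq1)\asymp r^2$, and Chebyshev yields $\operatorname{Var}/\mathrm{mean}^2=\Theta(1)$, not a polynomially small quantity. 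Likewise $\Prob{Y_{lm}^{(i_r)}>L}$ with $L\asymp r^2$ is $\Theta(1)$, not polynomially small. As a result, the per-level failure probability is not $o(1)$, and the union bound over $\lfloor r/m\rfloor-1$ levels (which can be as large as $\Theta(r)$) has no chance.

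The fix is that the count $N_r(2m,lm)$ is (up to boundary terms) the \emph{full} ancestor count $\sum_{1\le i\le i_r}S_l^{(i)}$, not the degraded $Z_l$. Replacing $\mathbf 1\{\mathscr T_i\text{ has height}\geq(l+2)m\}$ by $S_l^{(i)}$ gives, for every level $l$ in the allowed range, a sum of $\Theta(r^2)$ i.i.d.\ nonnegative variables (the trees conditioned on $\{Y_r=0\}$) with mean $\Theta(m^{-2})$, and it is from the large number of independent summands that one-sided concentration uniform in $l$ can be extracted. That is the structure of the proof of {\cite[Lemma~1]{chn}}, to which the paper defers without reproducing it. As written, your $Z_l$ throws this independence away for the contribution of the single tree $\mathscr T_{i_r}$, and the lemma cannot be recovered from it.
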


The proof of this lemma can be adapted straightforwardly from the proof of {\cite[Lemma~1]{chn}}, in the case $\beta=2$. 

\begin{prop}
\label{prop-block-renorm}
There exists $C >0$ such that, for any integer $m$ with $1 \leq m \leq Cr$, we have
\[
f(r) \geq C \cdot \min\{m, \left(\frac{r}{m}\right)^2 f(m)  \}.
\]
\end{prop}

\begin{proof}
Let us give a sketch of the proof of this result, as it is very similar to the one of {\cite[Proposition~1]{chn}}. The idea is to consider the shortest (oriented) path going from a vertex on the left boundary of $\mathcal{G}_r$, to its right boundary, and whether or not it leaves a small horizontal layer of a specific type.

More precisely, we pick a vertex $x$ on the left boundary of $\mathcal{G}_r$, at a height $0 \leq j \leq r$. Then, we can find an integer $l$ such that $x$ is located in the layer $\mathcal{L}_{[lm, (l+2)m]}$, with $0 \leq l \leq (r/m) -2$ and so that $\lvert lm - j \rvert \geq m/3$ and $\lvert (l+2)m - j \rvert \geq m/3$. Consider then the shortest oriented path from $x$ to the right boundary of $\mathcal{G}_r$. Either it stays in that layer, or it leaves it at some point. 

If it leaves the layer, then its length is bounded below by $m/3$.

If it does not leave this layer, then its length is bounded below by 
\[
\sum_{i=1}^{N_r(2m,lm)}\text{Diam}(\mathcal{G}_{2m}(i,lm)).
\]
Then, from \Cref{lem-lots-of-subblocks}, and from the definition of $f$, we get that, for $r/m$ large enough, for some $c'>0$ independent of $r,l,m$,
\[
\Prob{\{N_r(2m,lm) < c\left(r/m\right)^2\} \cup \{\exists i\, \in \{1, \dots, N_r(2m,lm)\} \ \ \text{Diam}(\mathcal{G}_{2m}(i,lm))\leq c'f(2m)\}}
\leq \frac{1}{4},
\]
so that
\[
\Prob{\text{Diam}(\mathcal{G}_r) \leq  m/3 \wedge c \cdot c' (r/m)^2f(2m)} \leq \frac{1}{4},
\]
which implies the desired bound, by the definition of $f$.

The details of the proof can be adapted from the proof of {\cite[Proposition~1]{chn}}.
\end{proof}

\Cref{thm-median-block-diameter} is then a purely analytic consequence of \Cref{prop-block-renorm}, and its proof is a straightforward adaptation of that of {\cite[Theorem~5]{chn}}.\\

We can now use \Cref{thm-median-block-diameter} to obtain the following lower bounds for the distances along the boundary of $\mathcal{L}$:

\begin{prop}
\label{prop--l-lowerbound}
For every $\varepsilon >0$, there exists an integer $K >0$ such that, for every $r \geq 1$,
\[
\Prob{\min_{\lvert j \rvert \geq Kr^2}\vec{d}_{\mathcal{L}}((0,0),(j,0)) \geq r} \geq 1 - \varepsilon.
\]
Consequently, for $K'=9K$, we also have, for every $r \geq 1$,
\[
\Prob{\min_{\lvert j \rvert \geq 2 K'r^2} \ \ \ \min_{-K'r^2 \leq i \leq K'r^2}\vec{d}_{\mathcal{L}}((i,0),(j,0)) \geq r} \geq 1 - 2\varepsilon.
\]
\end{prop}

\begin{proof}
Let us start with the first assertion. Let $\varepsilon >0$. Fix $r \geq 1$, and $K \geq 1$. Then, from \eqref{proba-gen-r-0}, the number $N_{(K,r)}$ of trees that reach height $r$ between $(0,0)$ and $(j,0)$ is bounded below by a binomial variable of parameters $(Kr^2, 3/((r+2)^2-1))$, so that, using Chebyshev's inequality, for any $a >0$,
\[
\Prob{N_{(K,r)} \leq \frac{3}{8}K - a} \leq \frac{3K}{a^2}.
\]
(Note that the binomial variable in question has expectation greater than or equal to $3K/8$, with equality when $r=1$, and a variance smaller than $3K$.)

Taking $a = \sqrt{(6K/\varepsilon)}$, for $K$ large enough that $a \leq (1/8)K +1$, we get
\begin{equation}
\Prob{N_{(K,r)} \leq \frac{1}{4}K +1} \leq \frac{\varepsilon}{2}.
\end{equation}

Now, on the event that $N_{(K,r)} > K/4 $, for any $j \geq Kr^2$, we have
\[
\vec{d}_{\mathcal{L}}((0,0),(j,0)) \geq \sum_{i=1}^{\lfloor \frac{K}{4} \rfloor + 1}\text{Diam}(\mathcal{G}_r(i)) \wedge r,
\]

so that, using \Cref{thm-median-block-diameter},
\[
\Prob{\vec{d}_{\mathcal{L}}((0,0),(j,0)) < cr \frac{K}{4} \wedge r} \leq \frac{1}{2^{K/4}}.
\]

Now, taking $K$ even larger if necessary, we can also have $cK/4 \geq 1$, and $1/2^{K/4} \leq \varepsilon/2$, which does give that, with probability at least $1 - \varepsilon$, for all $j \geq Kr^2$, $\vec{d}_{\mathcal{L}}((0,0),(j,0)) \geq r$. The case of negative $j$ can be treated in the same way.

Let us now turn to the second assertion. Assume that there exist $j \geq 2K'r^2$ and $i \in \{-K'r^2, \dots, K'r^2\}$, such that $\vec{d}_{\mathcal{L}}((i,0),(j,0)) < r$. Then, any geodesic from $(i,0)$ to $(j,0)$ must stay in the layer $\mathcal{L}_{[0,r]}$, and therefore must intersect the leftmost mirror geodesic from $(K'r^2,0)$ to the line $\mathcal{L}_r$, so that
\[
\vec{d}_{\mathcal{L}}((K'r^2,0),(j,0))<3r.
\]
But then, by the first assertion of the proposition, the probability of such an event is bounded above by $\varepsilon$. Considering also the case $j <-K'r^2$, we obtain the desired result.
\end{proof}

An alternative proof of this result, adapting to Eulerian triangulations the method used in {\cite{curien-legall}} for usual triangulations, can be found in Chapter 8 of {\cite{carrance-th}}. Note that {\cite{lehericy}}, that adapts the results of \cite{curien-legall} to planar quadrangulations, and that was written simultaneously to the present work, also uses a block decomposition similar to \cite{chn}.

\subsection{Upper bounds}
\label{subsec upper bounds}
After having proved in the previous subsection lower bounds for the distances along the boundary of $\mathcal{L}$, we now prove upper bounds for these quantities, that will carry to the UIPT of the digon, $\mathcal{T}_{\infty}^{(1)}$, thanks to \Cref{prop-comparison-pple}. For that purpose, we follow closely the chain of arguments leading to Proposition 17 in {\cite[Section~4.3]{curien-legall}}. These bounds are expressed in terms of coalescence of leftmost mirror geodesics:
\begin{prop}
\label{prop-geodesics-coalesce-fast-in-l}
Let $\delta >0$ and $\gamma>0$. We can choose an integer $A \geq 1$ such that, for every sufficiently large $n$, with probability at least $1 - \delta$:

$\forall \, i \in \{-n+1, -n+2, \dots, n\}$, the leftmost mirror geodesic starting from $(i,0)$ coalesces with the one starting from $(-n+\lfloor 2ln/A\rfloor,0)$, for some $0 \leq l \leq A$, before hitting $\mathcal{L}_{\lfloor \gamma \sqrt{n} \rfloor}$.
\end{prop}

 The proof of this proposition can be adapted straightforwardly from that of {\cite[Proposition~16]{curien-legall}}.\\

We now derive a similar result for $\mathcal{T}_{\infty}^{(1)}$. Recall the notation $L_r$ for the number of skeleton modules on $\partial^{*}B_r^{\bullet}(\mathcal{T}_{\infty}^{(1)})$.

For any integer $n \geq 1$, we write $u_0(n)$ for a vertex chosen uniformly at random in the vertices of type $n$ of $\partial^{*}B_n^{\bullet}(\mathcal{T}_{\infty}^{(1)})$, and $u_1(n), \dots, u_{L_n-1}(n)$ for the other type-$n$ vertices of $\partial^{*}B_n^{\bullet}(\mathcal{T}_{\infty}^{(1)})$, enumerated clockwise, starting from $u_0(n)$. We extend the definition of $u_i(n)$ to $i \in \mathbb{Z}$ by periodicity. 

\begin{prop}
\label{prop-geodesics-coalesce-fast-in-uipet}
Let $\gamma \in (0,1/2)$ and $\delta >0$. For every integer $A \geq 1$, let $H_{n,A}$ be the event where any leftmost mirror geodesic to the root starting from a type-$n$ vertex of $\partial^{*}B_n^{\bullet}(\mathcal{T}_{\infty}^{(1)})$ coalesces before time $\lfloor \gamma n \rfloor$ with the leftmost mirror geodesic to the root starting from $u_{\lfloor kn^2/A \rfloor}(n)$, for some $0 \leq k \leq \lfloor n^{-2}L_nA \rfloor$. Then, we can choose $A$ large enough that, for every sufficiently large $n$,
\[
\Prob{H_{n,A}} \leq 1 - \delta.
\]
\end{prop}

\begin{proof}
The idea of the proof is to carry the result of \Cref{prop-geodesics-coalesce-fast-in-l} over to the case of $\mathcal{T}_{\infty}^{(1)}$, using the \emph{comparison principle} of \Cref{prop-comparison-pple}. To apply it, one needs to consider the intersection of $H_{n,A}$ with an event of the form
\begin{equation}
\{ \lfloor an^2 \rfloor < L_n \leq \lfloor a^{-1}n^2 \rfloor \} \cap \{ \lfloor an^2 \rfloor < L_{n- \lfloor \gamma n \rfloor} \leq \lfloor a^{-1}n^2 \rfloor \}.
\end{equation}
\Cref{top-length-bounds} ensures that we can choose an $a >0$ such that this latter event holds with probability at least $1 - \delta/2$.

The details of the proof can be adapted verbatim from the proof of Proposition 17 in {\cite{curien-legall}}.
\end{proof}

\section{Asymptotic equivalence between oriented and non-oriented distances}
\label{sec subadd}

Recall that, on any Eulerian triangulation with a boundary $A$, we write $\vec{d}_A$ for the oriented distance on $A$, and $d_A$ for the usual graph distance. We will show that these two distances are asymptotically proportional, first on the layers of the LHPET $\mathcal{L}$, then on the ones of the UIPET $\mathcal{T}_{\infty}^{(1)}$, and finally in large finite Eulerian triangulations. To do so, we follow the chain of proofs of Sections 5 and 6 in {\cite{curien-legall}}, once again detailing mostly the additional arguments needed in our case.

\subsection{Subadditivity in the LHPET and the UIPET}

Recall that we write $\rho$ for the root vertex $(0,0)$ of the LPHET $\mathcal{L}$, and that $\mathcal{L}_r$ is the lower boundary of the layer $\mathcal{L}_{[0,r]}$. We have the following result:

\begin{prop}
\label{prop-subadd-l}
There exists a constant $\mathbf{c}_0 \in [2/3, 1]$ such that
\[
r^{-1}d_{\mathcal{L}}(\rho,\mathcal{L}_r) \xrightarrow[r\to \infty]{a.s.} \mathbf{c}_0.
\]
\end{prop}

\begin{proof}
The proof of this result, apart from the bounds on $\mathbf{c}_0$, is essentially the same as that of {\cite[Proposition~18]{curien-legall}}. However, as it is a very short argument, but central in this whole work, we write it here in its entirety.

For integers $0 \leq m < n$, recall that $\mathcal{L}_{[m,n]}$ is the infinite planar map obtained by keeping only the layers of $\mathcal{L}$ between the levels $m$ and $n$. The non-oriented distance $d_{\mathcal{L}_{[m,n]}}$ on this strip is defined by considering the shortest non-oriented paths that stay in $\mathcal{L}_{[m,n]}$. Thus, for two vertices $v, v' \in \mathcal{L}_{[m,n]}$, we have $d_{\mathcal{L}_{[m,n]}}(v,v') \geq d_{\mathcal{L}}(v,v')$.

Let then $m,n \geq 1$, and let $x_m$ be the leftmost vertex $x$ of $\mathcal{L}_m$ such that $d_{\mathcal{L}}(\rho,\mathcal{L}_m)=d_{\mathcal{L}}(\rho,x)$. We have
\[
d_{\mathcal{L}}(\rho,\mathcal{L}_{m+n}) \leq d_{\mathcal{L}}(\rho,\mathcal{L}_m) + d_{\mathcal{L}_{[m,m+n]}}(x_m,\mathcal{L}_{m+n}).
\]
As $x_m$ is a function of $\mathcal{L}_{[0,m]}$ only, and the layers in $\mathcal{L}$ are independent, the random variable $d_{\mathcal{L}_{[m,m+n]}}(x_m,\mathcal{L}_{m+n})$ is independent of $\mathcal{L}_{[0,m]}$, and has the same distribution as $d_{\mathcal{L}}(\rho,\mathcal{L}_n)$. 

We can then apply Liggett's version of Kingman's subbadditive theorem {\cite{liggett}}, to get the desired convergence: the fact that the limit is a constant follows from Kolmogorov's zero-one law. As for the bounds for $\mathbf{c}_0$, it is clear from \eqref{eq bound oriented dist by usual dist} that $\mathbf{c}_0 \in [1/2,1]$. Our proof that $\mathbf{c}_0$ must be at least $2/3$ relies on a result of asymptotic proportionality in \emph{finite} Eulerian triangulations, that will be stated further in \Cref{thm-total-asympt-prop-in-finite-trig}. We thus postpone this argument to after \Cref{thm-total-asympt-prop-in-finite-trig}.
\end{proof}

To carry this asymptotic proportionality over to large finite Eulerian triangulations, we will make a stop at the UIPET of the digon $\mathcal{T}_{\infty}^{(1)}$. In the remainder of this subsection, we write $d$ for the non-oriented distance on $\mathcal{T}_{\infty}^{(1)}$, $B_n^{\bullet}$ for $B_n^{\bullet}(\mathcal{T}_{\infty}^{(1)})$ and $\partial^{*}B_n^{\bullet}$ for $\partial^{*}B_n^{\bullet}(\mathcal{T}_{\infty}^{(1)})$, to simplify notation.

\begin{prop}
\label{prop-almost-proportional-in-close-layers-uipet}
Let $\varepsilon, \delta \in (0,1)$. We can find $\eta \in (0,1/2)$ such that, for every sufficiently large $n$, the property
\[
(1-\varepsilon)\mathbf{c}_0 \eta n \leq d(v,\partial^{*}B_{n-\lfloor \eta n \rfloor}^{\bullet} )\leq (1+\varepsilon)\mathbf{c}_0 \eta n \ \ \ \ \forall \, v \in \partial^{*}B_n^{\bullet}
\]
holds with probability at least $1-\delta$.
\end{prop}

\begin{proof}
Let us give a sketch of the proof, as it is very similar to the proof of Proposition 19 in {\cite{curien-legall}}. Recall the notation $u_j^{(n)}$ for the type-$n$ vertices of $\partial^{*}B_n^{\bullet}$. The first key step is to use \Cref{prop--l-lowerbound} to get that a non-oriented shortest path from some $u_j^{(n)}$ to $\partial^{*}B_{n-\lfloor \eta n \rfloor}^{\bullet}$ that stays in $B_n^{\bullet}$ cannot meander too much in the layer $B_n^{\bullet} \setminus B_{n-\lfloor \eta n \rfloor}^{\bullet}$, and, more precisely, that it must stay in the region bounded by the leftmost mirror geodesics starting at $u_{j-\lfloor cn^2 \rfloor}^{(n)}$ and $u_{j+\lfloor cn^2 \rfloor}^{(n)}$ respectively, for some $c >0$. Then, to bound probabilities of events on that sector of $B_n^{\bullet} \setminus B_{n-\lfloor \eta n \rfloor}^{\bullet}$, \Cref{prop-comparison-pple} together with \Cref{top-length-bounds} allows us to replace the skeleton of $B_n^{\bullet} \setminus B_{n-\lfloor \eta n \rfloor}^{\bullet}$ by independent Galton-Watson trees. We can therefore transfer the property of \Cref{prop-subadd-l} from $\mathcal{L}$ to $B_n^{\bullet} \setminus B_{n-\lfloor \eta n \rfloor}^{\bullet}$. Finally, to consider all vertices of $\partial^{*}B_n^{\bullet}$, we use the coalescence property obtained in \Cref{prop-geodesics-coalesce-fast-in-uipet}, which amounts to saying that it suffices to consider for the values of $j$ a fixed number $C$, large but independent of $n$.

The details of the proof can be adapted verbatim from the proof of Proposition 20 in {\cite{curien-legall}} (replacing $d_{\text{gr}}$ by $\vec{d}$, and $d_{\text{fpp}}$ by $d$), with a small caveat.

Indeed, when using the coalescence property of \Cref{prop-geodesics-coalesce-fast-in-uipet} (which corresponds to (57) in {\cite{curien-legall}}), one must pay attention to two things. 

First, \Cref{prop-geodesics-coalesce-fast-in-uipet} only gives an upper bound on the distances between vertices of $\partial^{*}B_n^{\bullet}$ \emph{of type $n$}, and the $C$ chosen $u_j^{(n)}$. To also include the vertices of type $n+1$, one must add an additional margin of 1 to the bounds, which, for any fixed $\varepsilon$, can be smaller than $\varepsilon \mathbf{c}_0\eta n/2$, for $n$ large enough.

A second restriction of the application of \Cref{prop-geodesics-coalesce-fast-in-uipet} is that it ensures that \emph{oriented} geodesics \emph{from the root} to a type-$n$ vertex $v$ of $\partial^{*}B_n^{\bullet}$ and to one of the chosen $u_j^{(n)}$, are merged up to a level $\gamma n$. Thus, the upper bound on the oriented distance between $v$ and $u_j^{(n)}$ is not $2 \gamma n$ but $3 \gamma n$.

Thus, rather than $\gamma=\varepsilon \mathbf{c}_0 \eta /2$, we take $\gamma=\varepsilon \mathbf{c}_0 \eta /6$, to obtain the equivalent of (57) in {\cite{curien-legall}} for all vertices of $\partial^{*}B_n^{\bullet}$.
\end{proof}

We then derive a more global result from the one of \Cref{prop-almost-proportional-in-close-layers-uipet}:

\begin{prop}
\label{prop-almost-proportional-from-root-uipet}
For every $\varepsilon \in (0,1)$,
\[
\Prob{(\mathbf{c}_0-\varepsilon)n \leq d(\rho,v) \leq (\mathbf{c}_0+\varepsilon)n \, \text{ for every vertex } v \in \partial^*B_{n}^{\bullet}} \xrightarrow[n \to \infty]{} 1.
\]
\end{prop}
The proof of this result is straightforwardly adapted from the proof Proposition 20 of {\cite{curien-legall}}, replacing $d_{\text{gr}}$ by $\vec{d}$, and $d_{\text{fpp}}$ by $d$.

\subsection{Asymptotic proportionality of distances in finite triangulations}

We now turn to finite triangulations. More precisely, we consider $\mathcal{T}_{n}^{(1)}$, uniform on the Eulerian triangulations of the digon with $n$ black triangles. Recall that such triangulations are in bijection with (rooted) Eulerian triangulations with $n$ black faces, from \Cref{DigonToRoot}. We write $\rho_n$ for the root of $\mathcal{T}_{n}^{(1)}$, and $d$ for the non-oriented distance on $\mathcal{T}_{n}^{(1)}$.

\begin{prop}
\label{prop-asympt-proportionality-in-finite-trig-from-root}
Let $o_n$ be uniform over the inner vertices of $\mathcal{T}_{n}^{(1)}$. Then, for every $\varepsilon > 0$,
\[
\Prob{\lvert d(\rho_n,o_n) - \mathbf{c}_0 \vec{d}(\rho_n,o_n) \rvert > \varepsilon n^{1/4}} \xrightarrow[n \to \infty]{} 0.
\]
\end{prop}

To derive this from the previous results on $\mathcal{T}_{\infty}^{(1)}$, we will first establish an absolute continuity relation between finite triangulations and this infinite model.

Recall that $\mathbb{C}_{1,r}$ is the set of Eulerian triangulations of the cylinder of height $r$ and bottom boundary length $2$. For $\Delta \in \mathbb{C}_{1,r}$, we denote by $N(\Delta)$ the number of black triangles in $\Delta$.
Finally, we write $\overline{\mathcal{T}}_n^{(1)}$ for the triangulation $\mathcal{T}_n^{(1)}$ together with a distinguished vertex $o_n$. The hull $B_r^{\bullet}(\overline{\mathcal{T}}_n^{(1)})$ is well-defined when $\vec{d}(\rho_n,o_n) >r +1$, otherwise we set it to be $\overline{\mathcal{T}}_n^{(1)}$.

\begin{lemma}
\label{lem-bound-cylinder-in-finite-by-in-uipet}
There exists a constant $\bar{c} >0$ such that, for every $n,r,p \geq 1$ and every $\Delta \in \mathbb{C}_{1,r}$ with top boundary half-length $p$, such that $n > N(\Delta)+p$,
\begin{equation}
\Prob{B_r^{\bullet}(\overline{\mathcal{T}}_n^{(1)}) = \Delta} \leq \bar{c} \left( \frac{n}{n-N(\Delta)+1} \right)^{3/2} \cdot \Prob{B_r^{\bullet}(\mathcal{T}_{\infty}^{(1)}) = \Delta}.
\end{equation}
\end{lemma}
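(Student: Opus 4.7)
The strategy is to compute both probabilities in essentially closed form and then take their ratio, controlling it via the sharp non-asymptotic enumeration bounds \eqref{eq:gf-asympt-bounds}.

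Arguing exactly as for \eqref{eq:probas-for-pointed-cylinder}, the event $\{B_r^{\bullet}(\overline{\mathcal{T}}_n^{(1)}) = \Delta\}$ occurs iff $\mathcal{T}_n^{(1)}$ is obtained by gluing on top of $\Delta$ some Eulerian triangulation $T'$ with semi-simple alternating boundary of length $2p$ and $n - N(\Delta)$ black triangles, and the distinguished vertex $o_n$ lies in the interior of $T'$. A short Euler-characteristic computation (using $\chi = 2$ for the sphere and $\chi = 0$ for the annulus, together with the alternating condition which forces equal numbers of black and white triangles) shows that every Eulerian triangulation of the digon with $n$ black triangles has $n + 2$ vertices, while every $\Delta \in \mathbb{C}_{1,r}$ with top perimeter $2p$ and $N(\Delta)$ black triangles has $N(\Delta) + p + 1$ vertices. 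Consequently, \emph{irrespective} of the chosen $T'$, the glued piece contributes exactly $n - N(\Delta) + 1 - p$ inner vertices, yielding
\[
\Prob{B_r^{\bullet}(\overline{\mathcal{T}}_n^{(1)}) = \Delta} = \frac{B_{n - N(\Delta), p}}{B_{n,1}} \cdot \frac{n - N(\Delta) + 1 - p}{n}.
\]
On the infinite side, specialising \eqref{eq:limit-probas-for-tall-pointed-cylinder} (with bottom perimeter $2$ and top perimeter $2p$) gives
\[
\Prob{B_r^{\bullet}(\mathcal{T}_{\infty}^{(1)}) = \Delta} = \mathbb{P}_{1,r}(\Delta) = \frac{C(p)}{C(1)} \, 8^{-N(\Delta)}.
\]

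Taking the ratio, the combinatorial factor $C(p)/C(1)$ and the powers of $8$ are set up to cancel cleanly. The hypothesis $n > N(\Delta) + p$ together with $p \geq 1$ forces $n - N(\Delta) \geq 2$, so both $n$ and $n - N(\Delta)$ are $\geq 1$ and \eqref{eq:gf-asympt-bounds} may be applied from above for $B_{n-N(\Delta),p}$ and from below for $B_{n,1}$. After cancellation this yields
\[
\frac{\Prob{B_r^{\bullet}(\overline{\mathcal{T}}_n^{(1)}) = \Delta}}{\Prob{B_r^{\bullet}(\mathcal{T}_{\infty}^{(1)}) = \Delta}} \leq \frac{c'}{c} \left( \frac{n}{n - N(\Delta)} \right)^{5/2} \cdot \frac{n - N(\Delta) + 1 - p}{n}.
\]
Since $p \geq 1$, one has $n - N(\Delta) + 1 - p \leq n - N(\Delta)$, which trades one power of $n/(n-N(\Delta))$ for $1$ and brings the exponent down to $3/2$. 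Finally, $n - N(\Delta) \geq 1$ implies $n/(n-N(\Delta)) \leq 2 n/(n - N(\Delta)+1)$, and setting $\bar{c} = 2^{3/2} c'/c$ gives the stated bound.

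The only real subtlety — and hence the main thing to be careful about — is the first-step observation that, although the set of legal glued pieces $T'$ is complicated, the number of inner vertices that $T'$ contributes is a function of $\Delta$ \emph{alone}. Once this is in place, the proof reduces to plugging in \eqref{eq:gf-asympt-bounds}; here it is essential that this estimate is \emph{uniform} over all perimeters $p$ (which is exactly the content of the discussion following \eqref{eq:gf-better-asympt}), since the lemma imposes no a priori bound on the top perimeter of $\Delta$.
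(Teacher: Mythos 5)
Your overall strategy is the same as the paper's (write both probabilities in terms of $B_{n,p}$ and $C(p)$, take the ratio, invoke the uniform bounds \eqref{eq:gf-asympt-bounds}, and absorb constants into $\bar{c}$), and the Euler-characteristic counts you perform for $V(\mathcal{T}_n^{(1)})=n+2$ and $V(\Delta)=N(\Delta)+p+1$ are correct. However, the step you yourself single out as ``the only real subtlety'' contains a genuine error: the number of inner vertices contributed by the glued piece $T'$ is \emph{not} a function of $\Delta$ alone. The piece $T'$ has a \emph{semi-simple} boundary of length $2p$, and when it is glued onto the simple top boundary of $\Delta$, any repetitions in $\partial T'$ force identifications among the top-boundary vertices of $\Delta$ inside $\mathcal{T}_n^{(1)}$; the subtraction $V(\mathcal{T}_n^{(1)})-V(\Delta)$ therefore does not count the vertices added by $T'$. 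Concretely, if $I(T')\geq 0$ denotes the number of such identifications (equivalently $2p$ minus the number of distinct vertices on $\partial T'$), then the number of inner vertices of $T'$ equals $n-N(\Delta)-p+1+I(T')$, which genuinely depends on $T'$. Your claimed value $n-N(\Delta)-p+1$ is only the case $I=0$, i.e.\ a \emph{lower} bound; substituting a lower bound for the vertex count into the numerator \emph{under}-estimates $\Prob{B_r^{\bullet}(\overline{\mathcal{T}}_n^{(1)})=\Delta}$, so as written your argument does not establish an upper bound on that probability. The paper avoids this by only asserting an inequality at this point, not an equality.

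The good news is that the conclusion is salvageable with a small modification: replace the claimed equality by the crude upper bound (inner vertices of $T'$) $\leq V(T')-1 = n-N(\Delta)+p$, and then use the hypothesis $n>N(\Delta)+p$ (which you already invoke, but only to guarantee $n-N(\Delta)\geq 2$) to get $n-N(\Delta)+p \leq 2\bigl(n-N(\Delta)\bigr)$. This replaces your $\tfrac{n-N(\Delta)-p+1}{n}$ by $\tfrac{2(n-N(\Delta))}{n}$, costs only a factor $2$, still cancels one power of $\tfrac{n}{n-N(\Delta)}$ to bring the exponent from $5/2$ down to $3/2$, and yields the stated bound with, say, $\bar{c}=2^{5/2}c'/c$.
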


\begin{proof}
The proof of this lemma is very similar to that of {\cite[Lemma~22]{curien-legall}}, with the additional subtlety that, like for \Cref{skeleton-proba-limit-lem}, we do not start with explicit expressions for probabilities in finite triangulations, as shown in \eqref{eq:bound-inner-vertices}.

Fix $r \geq 1$ and $\Delta \in \mathbb{C}_{1,r}$ with top boundary half-length $p$. We will write $V$ for $\#V(\Delta)$ to simplify notation. Using \eqref{eq:limit-probas-for-tall-pointed-cylinder} and the fact that $\mathcal{T}_{\infty}^{(1)}$ is the local limit of $\mathcal{T}_n^{(1)}$, we have
\begin{equation}
\label{eq:cylinder-in-uipet}
\Prob{B_r^{\bullet}(\mathcal{T}_{\infty}^{(1)})=\Delta} = \frac{C(p)}{C(1)}8^{-N(\Delta)}.
\end{equation}

On the other hand, \eqref{eq:probas-for-pointed-cylinder} gives the formula
\begin{align}
\label{eq:bound-inner-vertices}
\mathbb{P}\left(B_r^{\bullet}(\overline{\mathcal{T}}^{(1)}_n)=\Delta \right) &= \frac{B_{n-N,p}}{B_{n,1}}\cdot \frac{\#\text{inner vertices in $\mathcal{T}^{(1)}_n \setminus \Delta$}}{\#\text{inner vertices in $\mathcal{T}^{(1)}_n$}}\\
&\leq \frac{B_{n-N,p}}{B_{n,1}}\cdot \frac{n-V}{n},
\end{align}
where the last inequality is given by Euler's formula and the fact that at most $p$ vertices of $\partial^*\Delta$ are identified together in $\mathcal{T}^{(1)}_n$. (We still need $n > N+p$ since $\mathcal{T}^{(1)}_n \setminus \Delta$ will have $n-N-p$ inner vertices if none of these identifications occur.)

Then, using the bounds of \eqref{eq:gf-asympt-bounds} and the asymptotics of \eqref{gf-expression}, we get that
\[
\mathbb{P}\left(B_r^{\bullet}(\overline{\mathcal{T}}^{(1)}_n)=\Delta \right) \leq c^* C(p) \left( \frac{n}{n-N}\right)^{3/2}8^{-N}
\]
for some constant $c^*$. Comparing the last bound with \eqref{eq:cylinder-in-uipet} gives the desired result.
\end{proof}

\begin{proof}[of \Cref{prop-asympt-proportionality-in-finite-trig-from-root}]
Fix $\varepsilon >0$ and $\nu >0$. Its suffices to prove that, for all $n$ sufficiently large, we have
\begin{equation}
\label{eq asympt prop origin to uniform as ratio}
\Prob{\left\lvert \frac{d(\rho_n,o_n)}{\vec{d}(\rho_n,o_n)} - \mathbf{c}_0\right \rvert >2\varepsilon} < \nu.
\end{equation}

Indeed, as detailed in \Cref{prop geod cv to sup z}, the sequence $n^{-1/4}\vec{d}(\rho_n,o_n)$ is bounded in probability, so that the statement of the proposition will follow from \eqref{eq asympt prop origin to uniform as ratio}. Note that, in {\cite{curien-legall}}, the equivalent tightness is obtained as a consequence of the convergence of usual planar triangulations to the Brownian map: in our case, we had to use the weaker result of \Cref{thm-cv-of-labeled-tree}, as we obviously do not have a convergence at the level of maps yet.

To obtain \eqref{eq asympt prop origin to uniform as ratio}, we want to transfer the results of \Cref{prop-almost-proportional-from-root-uipet} on the UIPET to large finite triangulations. This necessitates the bounds of \Cref{lem-bound-cylinder-in-finite-by-in-uipet}, together with the statement of \Cref{prop-positive-mass-to-any-neigb-of-zero-in-distance-profile} on the profile of distances in large finite triangulations (which is once again a consequence of \Cref{thm-cv-of-labeled-tree}, as we cannot rely on a convergence to the Brownian map).

We omit the details of the proof of \eqref{eq asympt prop origin to uniform as ratio}, as they can be straightforwardly adapted from the equivalent statement in the proof of Proposition 21 in {\cite{curien-legall}}, replacing once again $d_{\text{gr}}$ by $\vec{d}$, and $d_{\text{fpp}}$ by $d$.
\end{proof}

We will now derive our final result of asymptotic proportionality between the oriented and non-oriented distances, \Cref{thm-total-asympt-prop-in-finite-trig}. This one is in the context of $\mathcal{T}_n$, the uniform rooted plane Eulerian triangulation with $n$ black faces, that is in correspondence with $\mathcal{T}_n^{(1)}$ as shown in \Cref{DigonToRoot}. As previously, we use $d$ to denote the non-oriented graph distance.

\begin{proof}[of \Cref{thm-total-asympt-prop-in-finite-trig}]
Let us give an idea of the proof of this theorem, which follows the arguments of the proof of Theorem 1 in {\cite{curien-legall}}.

From \Cref{prop-asympt-proportionality-in-finite-trig-from-root} and the correspondence between $\mathcal{T}_n^{(1)}$ and $\mathcal{T}_n$, we get that, if $o'_n$ is a uniform vertex of $\mathcal{T}_n$, we have
\begin{equation}
\label{eq:asympt-proportionality-from-root-without-boundary}
\Prob{\lvert d(\rho_n,o'_n) - \mathbf{c}_0\vec{d}(\rho_n,o'_n) \rvert > \varepsilon n^{1/4}} \xrightarrow[n \to \infty]{} 0.
\end{equation} 

Observe now that $\overline{\mathcal{T}}_n$, re-rooted at $\rho'_n$, the origin vertex of a random uniform edge $e_n$ (remember that all edges of $\mathcal{T}_n$ have a canonical orientation), still pointed at $o'_n$, has the same distribution as $\overline{\mathcal{T}}_n$. This implies that the statement of \eqref{eq:asympt-proportionality-from-root-without-boundary} also holds for the distances from $\rho'_n$, that is sampled according to its degree:
\begin{equation*}
\Prob{\lvert d(\rho'_n,o'_n) - \mathbf{c}_0\vec{d}(\rho'_n,o'_n) \rvert > \varepsilon n^{1/4}} \xrightarrow[n \to \infty]{} 0.
\end{equation*} 

 As the numbers of edges and vertices of $\mathcal{T}_n$ are fixed, this allows us to deduce a similar statement on distances between two random uniform vertices $o'_n, o^{\prime\prime}_n$ of $\mathcal{T}_n$:
\begin{equation}
\label{eq:asympt-proportionality-between-two-random-verts-without-boundary}
\Prob{\lvert d(o'_n,o^{\prime\prime}_n) - \mathbf{c}_0\vec{d}(o'_n,o^{\prime\prime}_n)  \rvert > \varepsilon n^{1/4}} \xrightarrow[n \to \infty]{} 0.
\end{equation}

We now want to make this statement into a global one on all the vertices of $\mathcal{T}_n$.

Let us fix $\delta \in (0,1/2)$. We can choose an integer $k \geq 1$ such that, for every $n$ sufficiently large, we can pick $k$ random vertices $(o_n^1, \dots, o_n^k)$ uniformly in $\mathcal{T}_n$ and independently from one another, satisfying
\begin{equation}
\label[ineq]{eq:sprinkling-gets-close-to-every-vertex}
\Prob{\sup_{x \in V(\mathcal{T}_n)} \left( \inf_{1 \leq j \leq k}\vec{d}(x,o^j_n)\right) < \varepsilon n^{1/4}} > 1- \delta.
\end{equation}
This follows from \Cref{prop-sprinkling}. Note that, once again, the equivalent property in {\cite{curien-legall}} was obtained as a consequence of the convergence of usual planar triangulations to the Brownian map, whereas here we had to obtain it from the convergence of the rescaled oriented distances from $o_n$ to a Brownian snake, which is a weaker result.

Then, \eqref{eq:asympt-proportionality-between-two-random-verts-without-boundary} implies that we also have, for all sufficiently large $n$, 
\[
\Prob{\bigcap_{1 \leq i \leq j \leq k} \{ \lvert d(o^i_n,o^j_n) - \mathbf{c}_0\vec{d}(o^i_n,o^j_n)  \rvert \leq \varepsilon n^{1/4} \}}> 1- \delta.
\]

Observe now that
\[
\sup_{x,y \, \in V(\mathcal{T}_n)}\lvert d(x,y) - \mathbf{c}_0\vec{d}(x,y) \rvert \leq \sup_{1 \leq i,j \leq N}\lvert d(o^i_n,o^j_n) - \mathbf{c}_0\vec{d}(o^i_n,o^j_n)  \rvert + 5 \sup_{x \in V(\mathcal{T}_n)}\left( \inf_{1 \leq j \leq N} \vec{d}(x,o^j_n)\right).
\]
Using the previous two bounds, the right-hand side of this inequality can be bounded by $6 \varepsilon$ outside a set of probability at least $2\delta$ for all sufficiently large $n$, which concludes the proof.
\end{proof}
\bigskip

Let us finally give a short proof of why $\mathbf{c}_0\geq 2/3$. Consider $\mathcal{T}_n$, the uniform rooted plane Eulerian triangulation with $n$ black faces. From \Cref{thm-total-asympt-prop-in-finite-trig}, for any $\varepsilon, \delta \in (0,1/2)$, for any $n$ large enough, 
\begin{equation}
\label{eq asympt prop last time}
\lvert d_n(x,y) - \mathbf{c}_0\vec{d}_n(x,y) \rvert \leq \varepsilon n^{1/4}, \ \ \ \ \forall x,y \in V(\mathcal{T}_n),
\end{equation}
outside an event of probability less than $\delta$.

Suppose that $\mathbf{c}_0< 2/3$. Let us fix $n \geq 1$, and consider some $c \in (0,1) $. Then, on the event of \eqref{eq asympt prop last time}, for any $x,y \in V(\mathcal{T}_n)$ such that 
\begin{equation}
\label{eq dist scaling for excluding 1/2}
d_n(x,y) \geq cn^{1/4},
\end{equation} we have:
\[
\vec{d}_n(x,y) \geq \left(\frac{1}{\mathbf{c}_0} - \frac{\varepsilon}{c}\right)d_n(x,y).
\]
This means that, for any geodesic $\gamma$ for the distance $d_n$ from $x$ to $y$ in $\mathcal{T}_n$, a fraction larger than or equal to $(1/\mathbf{c}_0-1-\varepsilon/c)$ of the edges of $\gamma$ are oriented from $y$ to $x$. But, as the above bound also applies when we exchange $x$ and $y$, a same fraction of edges of $\gamma$ must be oriented from $x$ to $y$, which is not possible if $(1/\mathbf{c}_0-1-\varepsilon/c)>1/2$, that is, $\varepsilon/c < 1/\mathbf{c}_0 -3/2$.

Since, for any $\delta \in (0,1/2)$, there exists a $c(\delta) \in (0,1)$ such that, if $n$ is large enough, outside of an event of probability less than $\delta$, a positive proportion of pairs of vertices of $\mathcal{T}_n$ satisfy \eqref{eq dist scaling for excluding 1/2}, we deduce that \eqref{eq asympt prop last time} cannot have a high probability for large $n$, if $\mathbf{c}_0<2/3$.\\

It would be interesting to also refine the upper bound on $\mathbf{c}_0$. However, this seems to necessitate deeper arguments than our refinement of the lower bound.

\section{Convergence for the Riemannian distance}
\label{sec riem}

We now turn our attention to another distance that can be defined on Eulerian triangulations, the \textbf{Riemannian distance \boldmath\(d^R\)}. To define this distance, we start by assigning to a triangulation \unboldmath$A$, the piecewise-linear metric space $S(A)$, obtained by gluing equilateral, Euclidean triangles with sides of unit length, according to the combinatorics of $A$. We call this space the \textbf{Euclidean geometric realization} of $A$. It naturally comes endowed with a metric, that we denote by $d^R$, and, by a slight abuse of notation, we also denote by $d^R$ the induced distance on the vertices of $A$.

We want to show that, like the usual graph distance $d$, the Riemannian distance $d^R$ is asymptotically proportional to the oriented distance $\vec{d}$, so that, endowed with $d^R$, the uniform Eulerian triangulation $\mathcal{T}_n$ still converges to the Brownian map. This can be once again proven using the layer decomposition of finite and infinite Eulerian triangulations with respect to $\vec{d}$, together with an ergodic subadditivity argument. Once this argument gives the desired asymptotic proportionality on $\mathcal{L}$, the results of \Cref{sec subadd} can be directly adapted to $d^R$, to obtain the new convergence to the Brownian map.

However, the subbadditivity argument presents here a hurdle that was not present in the case of $d$: indeed, while we still have the immediate upper bound $d^R \leq \vec{d}$ (and even: $d^R \leq d$), we have no obvious way to bound $d^R$ from below with $\vec{d}$. Such a bound is crucial, since, without it, the proportionality constant given by the ergodic subbadditivity theorem could very well be zero. We therefore prove the following result:

\begin{prop}
\label{prop lower bound on riem dist}
Let $A$ be a triangulation, endowed with its graph distance $d$, canonical oriented pseudo-distance $\vec{d}$ and Riemannian distance $d^R$. Then,  
\[
d^R \geq \frac{\sqrt{3}}{4} d.
\]
Consequently, 
\[
d^R \geq \left(\frac{\sqrt{3}}{8}\right) \vec{d}.
\]
\end{prop}

\begin{proof}
Let us prove the first bound, as the second is a direct consequence of it, together with \eqref{eq bound oriented dist by usual dist}.

Let $A$ be a triangulation, and $S(A)$ its Euclidean geometric realization. For any continuous path $\gamma: [0,1] \to S(A)$, we will construct an edge path $\gamma_E$ in $A$, such that, if $\gamma$ is a geodesic, then its length $l(\gamma)$ can be bounded from below by $\sqrt{3}/4$ times the number of edges in $\gamma_E$, which gives the desired inequality.

Let thus $\gamma$ be a continuous path in $S(A)$. We construct a sequence $(u_0, u_1, \dots, u_k)$ of vertices of $A$, in the following way. We start by setting $u_0$ to be the vertex closest to $\gamma(0)$ (if there is an ambiguity, we just pick one of the possible vertices in an arbitrary way). Let $f_0$ be the first triangle that $\gamma$ crosses (\emph{i.e.}, gets out of after having spent a positive time in it). Let then $u_1$ be the vertex closest to the point where $\gamma$ leaves for the last time any of the triangles incident to $u_0$. We then define $u_2$, etc. similarly. This yields a \emph{finite} sequence of vertices $u_0, u_1, \dots, u_k$: indeed, $\gamma$ cannot get close to an infinite number of distinct vertices of $S(A)$. Note also that $u_k$ is necessarily the closest vertex to $\gamma(1)$.

By construction, for any $0 \leq i \leq k -1$, $u_i$ and $u_{i+1}$ are neighbors in $A$, so that the sequence does induce a path $\gamma_E$ of $k$ edges.

\begin{figure}[htp]
\centering
\includegraphics[scale=2]{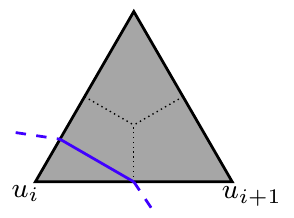}
\caption{The shortest distance that $\gamma$ can cross between the vicinity $u_i$ and the vicinity of $u_{i+1}$ corresponds to the altitude of an equilateral triangle of side length $1/2$, which is equal to $\sqrt{3}/4$ (the boundaries of the Voronoi cells associated to the vertices of the triangle are dashed, and $\gamma$ is in purple).}
\label{RiemDistMin}
\end{figure}
Consider now a geodesic $\gamma$ in $S(A)$ (with respect to $d^R$), going from a vertex $v$ to a different vertex $w$. Then, the restriction of $\gamma$ to any triangle it crosses is necessarily a straight line (since this is a geodesic on a Euclidean triangle). Therefore, the portion of $\gamma$ going from the first moment that $\gamma$ is closest to $u_i$, to the last moment it is closest to $u_{i+1}$, crosses at least one triangle, from one edge to another, and also crossing the Voronoi cell of one of the vertices. This implies that this portion of $\gamma$ has a length of at least $\sqrt{3}/4$ (see \Cref{RiemDistMin}). Thus,
\[
l(\gamma) \geq\frac{\sqrt{3}}{4} \cdot k = \frac{\sqrt{3}}{4} \cdot l(\gamma_e).
\]
This concludes the proof.
\end{proof}

Recall that we write $\rho$ for the root vertex $(0,0)$ of the LHPET $\mathcal{L}$, and that $\mathcal{L}_r$ is the lower boundary of the layer $\mathcal{L}_{[0,r]}$. We have the following result:

\begin{prop}
\label{prop-subadd-riem}
There exists a constant $\mathbf{c}_1 \in [(\sqrt{3}/4)\mathbf{c}_0, 1]$ such that 
\[
r^{-1}d^R_{\mathcal{L}}(\rho, \mathcal{L}_r) \xrightarrow[r \to \infty]{a.s.} \mathbf{c}_1.
\]
\end{prop}

In the sequel, $\mathbf{c}_1$ will refer to the constant of \Cref{prop-subadd-riem}.

\begin{proof}
We proceed like for \Cref{prop-subadd-l}, by considering the layers $\mathcal{L}_{[m,n]}$, for integers $0 \leq m \leq n$. Such a layer corresponds to a strip in the Euclidean geometrical realization $S(\mathcal{L})$ of $\mathcal{L}$: we can define the Riemannian distance $d^R_{\mathcal{L}_{[m,n]}}$ on the vertices of $\mathcal{L}_{[m,n]}$ by considering the shortest paths (\emph{starting and ending at vertices}) in $S(\mathcal{L})$ that stay in this strip. Then, we have, for any two vertices $v,v' \in \mathcal{L}{[m,n]}$, we have $d^R_{\mathcal{L}_{[m,n]}}(v,v') \geq d^R_{\mathcal{L}}(v,v')$.

Thus, as for the graph distance, if $m,n \geq 1$, and $x_m$ is the leftmost vertex $x$ of $\mathcal{L}_m$ such that $d^R_{\mathcal{L}}(\rho, \mathcal{L}_m)=d^R_{\mathcal{L}}(\rho,x)$, we have
\[
d^R_{\mathcal{L}}(\rho,\mathcal{L}_{m+n}) \leq d^R_{\mathcal{L}}(\rho,\mathcal{L}_{m}) + d^R_{\mathcal{L}_{[m,n]}}(x_m,\mathcal{L}_{m+n}).
\]
As in the case of the graph distance $d$, since $x_m$ is a function of $\mathcal{L}_{[0,m]}$ only, and since the layers in $\mathcal{L}$ are i.i.d., this yields the desired convergence. 

The upper bound on $\mathbf{c}_1$ is immediate; let us briefly explain how we obtain the lower bound. Fix $\varepsilon >0$ and $\delta \in (0,1/2)$. We have that, for $n$ large enough, outside of an event of probability less than $\delta$,
\[
\lvert d_n(x,y) - \mathbf{c}_0\vec{d}_n(x,y) \rvert \leq \varepsilon n^{1/4} \ \ \ \forall \, x,y \in V(\mathcal{T}_n),
\]
so that we get from \Cref{prop lower bound on riem dist}:
\[
 d_n^R(x,y) \geq \frac{\sqrt{3}}{4}\mathbf{c}_0\vec{d}_n(x,y) - \varepsilon n^{1/4}.
\]
Now, there exists a constant $0<C(\delta)<1$, that depends only on $\delta$, such that, for $n$ large enough, outside of an event of probability less than $\delta$, a positive proportion of pairs of vertices of $\mathcal{T}_n$ satisfy
\[
\vec{d}_n(x,y) \geq C(\delta)n^{1/4}.
\]
Therefore, for all such pairs, we have
\[
d_n^R(x,y) \geq \left(\frac{\sqrt{3}}{4}\mathbf{c}_0-\frac{\varepsilon}{C(\delta)}\right)\vec{d}_n(x,y),
\]
so that, necessarily, $\mathbf{c}_1 \geq \frac{\sqrt{3}}{4}\mathbf{c}_0$.
\end{proof}

Retracing for $d^R$ the same arguments as the ones used for $d$ in \Cref{sec subadd}, we deduce from \Cref{prop-subadd-riem} the following result:

\begin{thm}
\label{thm-proportionality-for-riem-dist}
Let $\mathcal{T}_n$ be a uniform random rooted Eulerian planar triangulation with $n$ black faces, and let $V(\mathcal{T}_n)$ be its vertex set. For every $\varepsilon >0$, we have 
\[
  \Prob{\sup_{x,y \in V(\mathcal{T}_n)}\lvert d^R_n(x,y) - \mathbf{c}_1\vec{d}_n(x,y) \rvert > \varepsilon n^{1/4}} \xrightarrow[n \to \infty]{} 0.
\]
\end{thm}

This allows us to add a third scaling limit to the joint convergence of \Cref{thm-cv-to-b-map}:
\begin{cor}
\label{coro cv Riem to B map}
Let $(\mathbf{m}_{\infty},D^*)$ be the Brownian map. We have the following joint convergences
\begin{align*}
n^{-1/4}\cdot (V(\mathcal{T}_n),\overleftrightarrow{d}_{\! \!n}) &\xrightarrow[n \to \infty]{(d)}  \phantom{\mathbf{c}_0 }\ \ (\mathbf{m}_{\infty},D^*)\\
n^{-1/4}\cdot (V(\mathcal{T}_n),d_n) &\xrightarrow[n \to \infty]{(d)}  \mathbf{c}_0 \cdot (\mathbf{m}_{\infty},D^*)\\
n^{-1/4}\cdot (S(\mathcal{T}_n),d^R_n) &\xrightarrow[n \to \infty]{(d)}  \mathbf{c}_1 \cdot (\mathbf{m}_{\infty},D^*),
\end{align*}
for the Gromov-Hausdorff distance on the space of isometry classes of compact metric spaces.
\end{cor}

Let us sketch very quickly the proof of \Cref{coro cv Riem to B map}: following the same steps as the ones we made for $d$ in \Cref{sec subadd}, the result of \Cref{thm-proportionality-for-riem-dist} implies that the Brownian map is the scaling limit of the vertex set $V(\mathcal{T}_n)$ endowed with the distance induced by $S(\mathcal{T}_n)$, and not $S(\mathcal{T}_n)$ itself. However, the Gromov-Hausdorff distance between $(S(\mathcal{T}_n),d^R)$ and $(V(\mathcal{T}_n),d^R)$ is at most $\sqrt{3}/4$ (considering $V(\mathcal{T}_n)$ as embedded into $S(\mathcal{T}_n)$), so that this convergence does extend to $S(\mathcal{T}_n)$.\\

As explained in the introduction, the result of \Cref{coro cv Riem to B map} allows us to make a more direct comparison between models of random maps as studied by probabilists, and models of $2D$ quantum gravity studied by theoretical physicists, such as Causal Dynamical Triangulations, as the latter models focus on the Euclidean geometric realization associated to some combinatorial maps. \\

Note that the geometric argument in the proof of \Cref{prop lower bound on riem dist} works for any triangulation, and not just an Eulerian one. Thus, relying on the layer decomposition of usual triangulations of {\cite{curien-legall}}, we can prove in the same way as here that usual triangulations, equipped with the Riemannian metric, also converge to the Brownian map. A similar geometric argument should also work for quadrangulations, along with the layer decomposition of {\cite{legall-lehericy}}.

\paragraph{Acknowledgements}

I warmly thank Grégory Miermont for his crucial help and advice, as well as Nicolas Curien and Christina Goldschmidt for their insightful remarks. I also thank the referees for their very helpful feedback.

\printbibliography

\contactrule
\contactACarrance

\end{document}